\newif\ifsiam
\newif\ifarxiv
\crefname{remark}{Remark}{Remarks}
\crefname{hypothesis}{Hypothesis}{Hypotheses}
\pgfplotsset{compat=1.3,
  ylabsh/.style={every axis y label/.style={at={(0,0.5)}, xshift=#1, rotate=90}}} 
\tikzset{external/system call={lualatex -shell-escape -halt-on-error -interaction=batchmode -jobname "\image" "\texsource"}}
\newcommand{%
  \tikzsetnextfilename{}%
  \input{tikz/}%
}[1]{%
  \tikzsetnextfilename{#1}%
  \input{tikz/#1}%
}
\definecolor{royalblue}{rgb}{0.00000,0.44700,0.74100}%
\definecolor{royalorange}{rgb}{0.85000,0.32500,0.09800}%
\definecolor{royalyellow}{rgb}{0.92900,0.69400,0.12500}%
\definecolor{purple}{rgb}{0.5804, 0.0, 0.82745098}%
\definecolor{applegreen}{rgb}{0.55, 0.71, 0.0}
\definecolor{bittersweet}{rgb}{1.0, 0.44, 0.37}
\definecolor{purple1}{HTML}{771155}
\definecolor{purple2}{HTML}{AA4488}
\definecolor{purple3}{HTML}{CC99BB}
\definecolor{blue1}{HTML}{114477}
\definecolor{blue2}{HTML}{4477AA}
\definecolor{blue3}{HTML}{77AADD}
\definecolor{green1}{HTML}{117777}
\definecolor{green2}{HTML}{44AAAA}
\definecolor{green3}{HTML}{77CCCC}
\definecolor{green4}{HTML}{117744}
\definecolor{green5}{HTML}{44AA77}
\definecolor{green6}{HTML}{88CCAA}
\definecolor{yellow1}{HTML}{777711}
\definecolor{yellow2}{HTML}{AAAA44}
\definecolor{yellow3}{HTML}{DDDD77}
\definecolor{brown1}{HTML}{774411}
\definecolor{brown2}{HTML}{AA7744}
\definecolor{brown3}{HTML}{DDAA77}
\definecolor{red1}{HTML}{771122}
\definecolor{red2}{HTML}{AA4455}
\definecolor{red3}{HTML}{DD7788}
\title{Interplay between normal forms and center manifold reduction for
homoclinic predictors near Bogdanov-Takens bifurcation\ifsiam\thanks{Submitted to the editors DATE.}\fi}
\author{ 
  M.M. Bosschaert\thanks{Department of Mathematics, Hasselt University,
    Diepenbeek Campus, Agoralaan Gebouw D, 3590 Diepenbeek, Belgium
    (\email{maikel.bosschaert@uhasselt.be}).}
  \and
  Yu.A. Kuznetsov\thanks{Department of Mathematics, Utrecht University, 
    Budapestlaan 6, 3508 TA Utrecht, The Netherlands and\newline 
    Department of Applied Mathematics, University of Twente, Zilverling Building, 
    7500AE Enschede, The Netherlands (\email{I.A.Kouznetsov@uu.nl}).}
}
\DeclareMathOperator{\sech}{sech}
\DeclareMathOperator{\arcsech}{arcsech}
\DeclareMathOperator{\AINV}{A^{\text{INV}}}
\newif\ifcompileimages%
\begin{document}

\maketitle

\begin{abstract}
This paper provides for the first time correct third-order homoclinic
predictors in $n$-dimensional ODEs near a generic Bogdanov-Takens bifurcation
point.  To achieve this, higher-order time approximations to the nonlinear time
transformation in the Lindstedt-Poincar\'e method are essential. Moreover, a
correct transform between approximations to solutions in the normal form and
approximations to solutions on the parameter-dependent center manifold is
needed.  A detailed comparison is done between applying different normal forms
(smooth and orbital), different phase conditions, and different perturbation
methods (regular and Lindstedt-Poincar\'e) to approximate the homoclinic
solution near Bogdanov-Takens points.  Examples demonstrating the correctness
of the predictors are given. The new homoclinic predictors are implemented in
the open-source MATLAB/GNU Octave continuation package MatCont.
\end{abstract}

\begin{keywords}
  codimension 2 Bogdanov-Takens bifurcation, homoclinic solution, 
  Lindstedt-Poincar\'e method, regular perturbation method, blow-up
  transformation, continuation software
\end{keywords}

\begin{AMS}
  37M20, 65P30, 34C37, 34B08, 34B15, 34B40, 34E10
\end{AMS}

\section{Introduction} 
Let $f\colon\mathbb R^n \times \mathbb R^2 \to \mathbb R^n$ with $n\geq 2$, be
smooth and suppose that the autonomous ordinary differential equation (ODE)
\begin{equation}
    \label{eq:ODE}
    \dot x(t) = f(x(t),\alpha)
\end{equation}
has equilibrium  $x_0= 0$ that undergoes a codimension two local bifurcation at
the critical parameter value $\alpha_{0}=0$. Here the dot means the derivative
with respect to the independent variable $t\in\mathbb R$. To understand the
dynamics near the bifurcation point for nearby parameter values, one typically
first restricts the ODE to the center manifold. By projecting the solutions on
the center manifold onto the center subspace, one then obtains a
$n_c$-dimensional ODE that locally governs the restricted dynamics.  Using the
normal form theory, one further tries to transform the restricted ODE into a
simpler form, called the \emph{critical normal form}.

If the canonical unfolding of the critical normal form is known and only
qua\-li\-tative behavior near the equilibrium is of interest, one can stop
here. However, if one is interested in relating solutions of the unfolding to
those of the original system \cref{eq:ODE} near the bifurcation point, one
needs a relation between the \emph{parameter-dependent normal form} and the
restricted ODE, and also a relation between this restricted ODE on the
parameter-dependent center manifold and the original system \cref{eq:ODE}.
These two relations can be found simultaneously utilizing the \emph{homological
equation} approach, see~\cite{Beyn2002Chapter4}. 

The solutions of interest here are the codimension one bifurcation curves
emanating from a codimension two point and the corresponding orbits in phase
space. In general, the bifurcation curves in the 
parameter-dependent normal form are not known exactly, but only by an
approximation up to a certain order.  Si\-mi\-lar\-ly, the transformation
between of the normal form to the (parameter-dependent) center manifold is
generally also only known up to a certain order. Then, by combining these two
transformations, an approximation to the codimension one bifurcation curve and
the corresponding phase orbits is obtained for the original system
\cref{eq:ODE}.

These approximations are particularly useful in numerical continuation software
to start the continuation of the codimension one bifurcation curves emanating
from the codimension two bifurcation points, where the defining systems for the
orbits of interest become degenerate. A codimension two bifurcation that has
attracted much attention is the \emph{Bogdanov-Takens bifurcation} at which the
cri\-ti\-cal equilibrium has a double zero eigenvalue. It is well known that
under certain non-degeneracy and transversality conditions, three codimension
one bifurcation curves emanate from the Bogdanov-Takens point: a saddle-node,
an (Andronov-)Hopf, and a saddle-homoclinic bifurcation curve. Since the
standard defining systems for the equilibrium bifurcations are non-degenerate
at the Bogdanov-Takens point, one does not need an approximation to start
continuation there.  On the contrary, the standard defining system for the
homoclinic solution does become degenerate, which is easily seen since the
homoclinic orbit shrinks to the equilibrium point at the Bogdanov-Takens
bifurcation.

Starting continuation of the homoclinic orbits from a Bogdanov-Takens point in
ODEs attracted much attention. In planar systems, Melnikov's method was first
applied to solve this problem in~\cite{Rodriguez-Luis1990}. A first attempt to
provide asymptotic approximations to the homoclinic bifurcation curve near a
generic codimension two Bogdanov-Takens bifurcation point in general
$n$-dimensional systems was made in~\cite{Beyn_1994}. By applying a singular
rescaling to the (one of the equivalent) parameter-dependent normal form on the
center manifold, a perturbed planar Hamiltonian system is obtained. The
unperturbed Hamiltonian system contains an explicit homoclinic solution. A
first-order correction in parameter-space can subsequently be obtained by
setting up the problem as a branching problem in a suitable Banach space,
see~\cite{Beyn_1994}.  Then, by using the regular perturbation method,
higher-order approximations to the homoclinic bifurcation curve can be
obtained. Unfortunately in~\cite{Beyn_1994}, even the first-order correction in
the phase-space was not derived.  Nonetheless, it was proven that the obtained
homoclinic predictor converges to the true solution under the Newton iterations
in the perturbed Hamiltonian systems.

In~\cite{Kuznetsov2014improved} the work continued by obtaining a second-order
correction in parameter \emph{and} phase-space to the homoclinic bifurcation
curve for the perturbed Hamiltonian system. However, a new problem was
overlooked.  The normal form used in~\cite{Kuznetsov2014improved} is a normal
form for $C^\infty$-equivalence (also called \emph{smooth orbital
equivalence}), i.e., besides a $C^\infty$-coordinate change, also a time
reparametrization must be taken into account, which was not the case
in~\cite{Kuznetsov2014improved}. In the subsequent paper~\cite{Gray-Scott2015},
this problem was resolved by considering a smooth normal form for the
Bogdanov-Takens bifurcation point, which is a normal form for
$C^\infty$-conjugacy (\emph{smooth equivalence}). 

In the follow-up paper~\cite{Al-Hdaibat2016}, progress was made in obtaining a
uniform approximation in the time along the homoclinic solution, using a
generalization of the Lindstedt-Poincar\'e method. This removes the so-called
parasitic turns near the saddle point, as observed
in~\cite{Kuznetsov2014improved}. Although, as pointed out
by~\cite{Algaba_2019}, there were mistakes in the third-order approximation
with the Lindstedt-Poincar\'e method, the homoclinic predictor from
\cite{Kuznetsov2014improved} for the smooth normal form improved significantly
in the phase space.

Nonetheless, the problem of correctly lifting the asymptotics in the normal
form to the parameter-dependent center manifold remained unnoticed and
unsolved.  Effectively, only the zeroth-order approximation to the homoclinic
solutions in the phase space, i.e., a transformed homoclinic solution of the
unperturbed Hamiltonian system, was available for a general $n$-dimensional
system.

In this paper, we will provide for the first time the third-order homoclinic
predictor for the homoclinic solutions emanating from a generic Bogdanov-Takens
point for a general $n$-dimensional system. For this, we need to consider
several additional systems to be solved in the homological equation method that
were previously not taken into account.  During the derivation of the
coefficients of the normal form and the transformations, we will show that
there is no need to solve certain systems simultaneously, see the so-called
`big' system in ~\cite{Kuznetsov2014improved,Gray-Scott2015,Al-Hdaibat2016}.
Furthermore, by allowing a transformation of time between the normal form and
the original system, we can use the parameter-dependent \emph{smooth orbital}
normal form of the codimension two Bogdanov-Takens bifurcation point when
approximating the homoclinic solution up to order three. This normal form is
considerably simpler than previously employed smooth normal forms. The
derivation of the coefficients will be the subject of
\cref{sec:Center_manifold_reduction_ODE}.

Having derived the parameter-dependent center manifold transformation suitable for
lifting the third-order homoclinic asymptotics present in different generic
Bogdanov-Takens normal forms, we turn our attention to obtain the asymptotics
in \cref{sec:asymptotics}. We will revisit both regular perturbation method and
the generalized Lindstedt-Poincar\'e method considered previously.

The non-uniqueness of the homoclinic solution due to a time shift results in
the non-uniqueness for the systems to be solved in the regular perturbation
method. To obtain uniqueness, a so-called \emph{phase condition} needs to be
satisfied. The phase condition used in~\cite{Kuznetsov2014improved} originates
from a theoretical setting in~\cite{Beyn_1994}. In
\cref{sec:RPM_norm_minimizing_phase condition} we use a geometrically motivated
phase condition which slightly improves the regular perturbation solution.
Furthermore, by modifying~\cite[Proposition 4.3]{Beyn_1994}, we use symmetry
arguments to simplify the calculations.

In \cref{sec:PolynomailLindstedtPoincare} the generalized Lindstedt-Poincar\'e
method for the approximation of homoclinic orbits is improved by introducing an
additional transformation of time after applying the usual nonlinear time
transformation. The resulting algorithm solitary relies on polynomial division
and does not involve any hyperbolic or trigonometric functions as
in~\cite{Algaba_2019,Al-Hdaibat2016}. We show that for the quadratic Bogdanov-Takens
normal form, we can represent the homoclinic solution in phase-space with
only one single parameter.

In \cref{sec:third_order_homoclinic_approximation_LP} we provide an explicit
third-order homoclinic approximation in the perturbed Hamiltonian system using
the algorithm described in \cref{sec:PolynomailLindstedtPoincareMethod}. Here we also
provide a third-order approximation to the reparametrization of time. The
profiles of the homoclinic solution will only then be approximated accurately,
resulting in a robust initial predictor for starting continuation of the branch
of homoclinic orbits.  In~\cite{Al-Hdaibat2016} the importance of the
time-reparametrization was not recognized, and the zeroth-order approximation
was used. We will demonstrate in detail that it is essential to use the higher
time reparametrization by comparing the Lindstedt-Poincar\'e method with and
without the higher-order time-reparametrization.  Effectively, using the
Lindstedt-Poincar\'e method without the higher-order time-reparametrization is
equivalent to the zeroth-order regular perturbation method.

The algorithm given in \cref{sec:PolynomailLindstedtPoincareMethod} is implemented in
\cref{sec:case_study_BT2} in the programming language
Julia~\cite{bezanson2017julia} for the quadratic normal form for the
Bogdanov-Takens codimension two bifurcation. Here we gain some insight about the
finite convergence radius of the homoclinic asymptotics and the speed of the
algorithm.

By combining the homoclinic asymptotics derived in \cref{sec:asymptotics} with
the parameter-dependent center manifold transformation obtained in
\cref{sec:Center_manifold_reduction_ODE}, we get the correct homoclinic
predictor for a general $n$-dimensional system.  It will be shown in
\cref{sec:homoclinic_asymptotics_n_dimension} how to incorporate the time
translation into the homoclinic predictor.
 
Then we compare the homoclinic predictor for the smooth
normal form and the smooth orbital normal form. In
\cref{sec:comparison_homoclinic_predictors}, it will be shown that these two
predictors are asymptotically equivalent, up to a phase shift. Then, by
choosing the constants of integration in the time translation in a specific
manner, we show equivalence between the predictors.

All the above methods are implemented in the open-source bifurcation and
continuation software MatCont ~\cite{Dhooge_2008}.  In
\cref{sec:implementation} we describe the new implementation of the homoclinic
predictor in the latest version of MatCont. We show how to use the obtained
predictors to construct an initial prediction for the defining system of the
homoclinic solutions. Besides an initial prediction also an initial tangent
vector is necessary to start continuation.  Our implementation resolves the
issue of possible continuation in the wrong direction, i.e., towards the
Bogdanov-Takens point. 

The effectiveness of the new predictors is demonstrated on the topological
normal form and on two four-dimensional models from Neuroscience and Quantum
Field Theory in \cref{sec:examples}. A comparison between the new homoclinic
predictor near a generic codimension 2 Bogdanov-Takens bifurcation and the
predictor from~\cite{Al-Hdaibat2016} is given. It will be shown that the order
of the higher-order approximations to the homoclinic solutions in the normal
form is preserved under the parameter-dependent center manifold transformation.
Complementary to \cref{sec:examples} an 
\href{https://mmbosschaert.github.io/MatCont7p2NewInitBTHom-/}{online Jupyter Notebook}
is provided in which ten different models are considered using the new
homoclinic predictor and comparing different approximation methods in detail.

\section{Center manifold reduction combined with normalization and time
reparametrization}
\label{sec:Center_manifold_reduction_ODE}

Consider system \cref{eq:ODE}, which has a codimension two bifurcation at
$\alpha = 0$ of equilibrium $x\equiv 0$. Let the normal form on the
$n_c$-dimensional center manifold be given by
\begin{equation}
				\label{eq:G}
				\frac{d}{d\eta} w(\eta) = G(w(\eta), \beta),
								\qquad G\colon\mathbb R^{n_c} \times \mathbb
								R^2 \to \mathbb R^{n_c},
\end{equation}
where $G$ is assumed to be one of the (known) equivalent normal forms.
Suppose that a parameter-dependent approximation to an emanating codimension
one bifurcation curve in \cref{eq:G} is given by
\begin{equation}
    \label{eq:general_approximation} 
    \epsilon \mapsto (w_\epsilon(\eta), \beta_\epsilon).
\end{equation}
By the Shoshitaishvili reduction principle the solutions on the
parameter-dependent center manifold can be parametrized by 
\begin{align}
				\label{eq:H}
        x(t(\eta)) &= H(w(\eta), \alpha), 
					 & H\colon\mathbb R^{n_c} \times \mathbb R^2 \to \mathbb R^n, \\
				\label{eq:K}
        \alpha &= K(\beta), 
           & K\colon\mathbb R^2 \to \mathbb R^2.
\end{align}
Now let the time $\eta$ be defined through the time rescaling
\begin{equation}
				\label{eq:theta}
				\frac{dt}{d\eta} = \theta(w, \beta), \qquad 
								\theta\colon \mathbb R^{n_c} \times \mathbb R^2 \to \mathbb R^n.
\end{equation}
Then the invariance of the center manifold implies the \emph{homological
equation}
\begin{equation}
				\label{eq:homological_equation}
				f(H(w,\beta), K(\beta)) \theta(w, \beta) = H_w(w, \beta) G(w,\beta).
\end{equation}

The unknowns in \cref{eq:homological_equation} are $H$, $K$, $\theta$, and the
normal form coefficients in \cref{eq:BT_smooth_nf}. By expanding the functions
$H,K, \theta$, and $f$ and collecting terms of equal power in $(w,\beta)$, we
obtain linear systems which can be solved at each order, potentially depending
on non-uniqueness present in lower order systems. 

To determine which coefficients are needed to include in the expansions of
$H,K$, and $\theta$, we need to consider which terms in the expansion of the
reduced system on the center manifold of \cref{eq:ODE} affect the approximation
\cref{eq:general_approximation}.  It is important here to not only take into
account the terms that affect the approximation that are present in the normal
form $G$ but also terms that are \emph{not in the normal form}, as long as the
approximation \cref{eq:general_approximation} is affected by the terms. This
has not been understood correctly and will be made explicit for the
approximation of the homoclinic bifurcation curve emanating from the generic
codimension two Bogdanov-Takens bifurcation point in the next section.

\subsection{Parameter-dependent normal form}%
Suppose that the ODE \cref{eq:ODE} undergoes a generic codimension two
Bogdanov-Takens bifurcation at $(x,\alpha) \equiv (x_0,\alpha_0)$. That is the
linearization of \cref{eq:ODE} has a double, but not semisimple, zero
eigenvalue, while all other eigenvalues are away from the imaginary axis. The
critical smooth normal form on the two-dimensional center manifold is given
by~\cite{Arnold_1983,guckenheimer1983nonlinear}
\begin{equation*}
\begin{aligned}
\begin{cases}
\dot w_0 = w_1, \\
\dot w_1 = a w_0^2 + b w_0 w_1 + \mathcal O(\|w\|^3),
\end{cases}
\end{aligned}
\end{equation*}
where 
\begin{equation*}
ab \neq 0,
\end{equation*}
$w_i$ is a shorthand notion for $w_i(\eta)$ for $i=0,1$, and the dot is
the derivative with respect to $\eta$.

Under these non-degeneracy and certain transversality conditions, the
\emph{topological normal form} for the codimension two Bogdanov-Takens
bifurcation is given by  
\begin{equation}
\label{eq:universal_unfolding}
\begin{aligned}
\begin{cases}
\dot w_0 = w_1, \\
\dot w_1 = \beta_1 + \beta_2 w_1 + a w_0^2 + b w_0 w_1,
\end{cases}
\end{aligned}
\end{equation}
see~\cite{Bogdanov1975,Bogdanov1976,Takens1974,guckenheimer1983nonlinear,Kuznetsov2004}. 
It is well known that in system (\ref{eq:universal_unfolding}) three codimension one
bifurcation curves emanate from $(\beta_1,\beta_2)=(0,0)$: a saddle-node, a Hopf,
and a saddle-homoclinic bifurcation curve. 

By using either the regular perturbation or the Lindstedt-Poincar\'e method,
an approximation to the homoclinic bifurcation curve and the corresponding
solution can, theoretically, be obtained up to any order in the
singular-rescaling parameter $\epsilon$,
see~\cite{Kuznetsov2014improved,Gray-Scott2015, Al-Hdaibat2016, Algaba_2019}.

To obtain the second-order homoclinic approximation to the homoclinic solutions
on the center manifold in \cref{eq:ODE}, it is, in general, insufficient to
only consider the topological normal form \cref{eq:universal_unfolding},
see~\cite{Broer1991}.  One way to deal with this problem is to consider the
smooth parameter-dependent normal form 
\begin{equation}
\label{eq:BT_smooth_nf}
\begin{cases}
\begin{aligned}
\dot{w}_0 = & w_1,\\
\dot{w}_1 = & \beta_1+\beta_2 w_1+\left(a+a_1\beta_2\right)w_0^{2}
 +\left(b+b_1\beta_2\right) w_0w_1+ew_0^{2}w_1+dw_0^{3} + g(w,\beta),
\end{aligned}
\end{cases}
\end{equation}
with
\[
    g(w,\beta) = \mathcal O(|\beta_1|\|w\|^2 + |\beta_2| w_1^2 + \|\beta\|^2\|w\|^2
					 + \|\beta\|\|w\|^3 + \|w\|^4)
\] 
as in~\cite{Gray-Scott2015,Al-Hdaibat2016}. Here $w_i=w_i(t)(i=0,1)$ now
depends explicitly on $t$ as in the original ODE \cref{eq:ODE}.

However, in this paper, we will allow for a time-reparametrization and use the
$C^\infty$-equivalent normal form 
\begin{equation}
\label{eq:normal_form_orbital}
\begin{cases}
\begin{aligned}
	\dot w_0 &= w_1, \\
	\dot w_1 &= \beta_1 + \beta_2 w_1 + aw_0^2 + b w_0 w_1 + w_0^2 w_1
								h(w_0,\beta) + w_1^2 Q(w_0,w_1,\beta),
\end{aligned}
\end{cases}
\end{equation}
where $h$ is $C^\infty$ and $Q$ is $N$-flat, see~\cite{Broer1991}. Here the dot
represents the derivative with respect to the new time $\eta$ of
$w_i(\eta)(i=0,1)$.  Furthermore, we will show that we can assume $h(0,0)=0$.
Note that we do not impose the coefficients to be $a=1$ and $b=\pm 1$ as
in~\cite{Broer1991}. This simplifies the systems to be solved in the next
section without complicating the solutions for the homoclinic corrections.
Indeed, we can scale-out the coefficients $a$ and $b$ in the
singular-rescaling. Also note that the normal form
\cref{eq:normal_form_orbital} was used in~\cite{Broer1991} to study degenerate
(codimension 3) Bogdanov-Takens bifurcations, while we found it to be essential
for constructing homoclinic predictors in the case of generic codimension two
Bogdanov-Takens bifurcations.

To approximate the homoclinic solutions emanating from the Bogdanov-Takens point
we apply the singular rescaling
\begin{equation}
\label{eq:blowup}				
w_0 = \frac a{b^2} u \epsilon^2, \quad
w_1 = \frac{a^2}{b^3} v \epsilon^3, \quad
\beta_1 = -4 \frac{a^3}{b^4} \epsilon^4, \quad 
\beta_2 = \frac a b \tau \epsilon^2, \quad 
s = \frac ab \epsilon \eta, \quad (\epsilon \neq 0),
\end{equation}
to \cref{eq:normal_form_orbital} with $h(0,0)=0$ to obtain the second
order nonlinear oscillator
\begin{equation}
\label{eq:second_order_nonlinear_oscillator}
				\ddot u = -4 + u^2 + \dot u \left( u + \tau \right)\epsilon + \mathcal
								O(\epsilon^4).
\end{equation}
Here the dot represents the derivative with respect to $s$.

\subsection{Center manifold reduction for smooth orbital normal form}
\label{subsec:center_manifold_tranformation_orbital}
We want to relate the third-order homoclinic approximation in the smooth orbital
normal form \cref{eq:normal_form_orbital} to the homoclinic solutions of
\cref{eq:ODE} near $(x_0,\alpha_0)$. The third-order approximation depends, by
definition, on the coefficients in $\epsilon$ up to order three in the
perturbed Hamiltonian system \cref{eq:second_order_nonlinear_oscillator}, see
\cref{sec:PolynomailLindstedtPoincare}.  By inspecting the blowup
transformation \cref{eq:blowup} we can determine exactly which coefficients in
\cref{eq:ODE} must be included in the expansion of $H,K$ and which multilinear
forms to include in the expansion of $f$. Indeed, we search for those terms in
the expansion of the reduced system on the center manifold of \cref{eq:ODE}
that affect the coefficients in $\epsilon$ up to order three in
\cref{eq:second_order_nonlinear_oscillator}. These are determined by solving
the linear Diophantine equation
\begin{equation}
				\label{eq:diophantine}
				4i + 2j + 2k + 3l - 4 = n, \qquad n\in\{-2,-1,0,1,2,3\},
\end{equation}
for $i,j,k,l \in \mathbb N_0$. In \cref{fig:terms_affecting_predicor} the
solutions to \cref{eq:diophantine} are listed. 
\begin{table}
\begin{center}
\begin{tabular}{ll}
\hline
order & terms \\
\hline%
\(\epsilon^{-2}\) & \(u_0,\alpha_2\)\\
\(\epsilon^{-1}\) & \(u_1\)\\
\(\epsilon^0\)  & \(u_0^2,u_0 \alpha_2,\alpha_2^2,\alpha_1\)\\
\(\epsilon^1\)  & \(u_0u_1, u_1\alpha_2\) \\
\(\epsilon^2\)  & \(u_1^2,u_0^3,u_0^2 \alpha_2 ,u_0 \alpha_2^2 ,\alpha_2^3,
									u_0\alpha_1 ,\alpha_1 \alpha_2\)\\
\(\epsilon^3\)  & \(u_0^2 u_1,u_0 u_1 \alpha_2, u_1 \alpha_2^2, u_1 \alpha_1\)\\
\hline
\end{tabular}
\caption{\label{fig:terms_affecting_predicor}
        Terms in the reduced system on the center manifold that affect the
    third-order predictor.} 
\end{center}
\end{table}
To transform away these terms (into higher order terms), one needs exactly the
corresponding coefficients in the expansions of $H$ and $K$. To be concrete,
suppose the term $\alpha_1 \alpha_2$ is present in the reduced system on the
center manifold. Applying the blowup transformation \cref{eq:blowup}, the term
$\alpha_1 \alpha_2$ will show up in the coefficient of $\epsilon^2$  in
\cref{eq:second_order_nonlinear_oscillator}. Since we will derive a third-order
approximation for \cref{eq:second_order_nonlinear_oscillator} in which the
corresponding term $\beta_1 \beta_2$ is not present, this term needs to be
transformed away. It is not too difficult to show that the coefficients needed
to perform this operation in phase and parameter-space are precisely $H_{0011}$
and $K_{11}$. In \cref{app:incorrect_predictor} an explicit example is given to
show that the transformation in~\cite{Al-Hdaibat2016} leads to an incorrect
predictor for the parameters.

Thus, we expand the mappings $H$, $K$, and $\theta$, including precisely those
coefficients needed to transfer the homoclinic predictor in the normal form to
the center manifold maintaining the accuracy. Using
\cref{fig:terms_affecting_predicor} we write:

\begin{align}
f(x,\alpha) ={}&
Ax+J_1\alpha+\frac12 B(x,x)+A_1(x,\alpha)+\frac12 J_2(\alpha,\alpha) 
+\frac16 C(x,x,x) \label{eq:f_expansion} \\
& \quad +\frac12 B_1(x,x,\alpha)+\frac12 A_2( x,\alpha, \alpha)
	+ \frac16 J_3(\alpha, \alpha, \alpha)  
  +\mathcal{O}\left(\|x\|^4+\|\alpha\|^3\right), \nonumber \\
H(w,\beta)={}& q_0w_0 + q_1w_1 + H_{0010}\beta_1 + H_{0001} \beta_2 
  + \frac12 H_{2000}w_0^2 + H_{1100}w_0w_1 + \frac12 H_{0200}w_1^2 
  \label{eq:H_expansion} \\
  & + H_{1010}w_0\beta_1 + H_{1001}w_0\beta_2 + H_{0110}w_1\beta_1 
  + H_{0101}w_1\beta_2 + \frac12 H_{0002}\beta_2^2\nonumber \\
  & + H_{0011}\beta_1\beta_2 + \frac16 H_{3000}w_0^3 + \frac12 H_{2100}w_0^2w_1 
  + H_{1101}w_0w_1\beta_2 + \frac12 H_{2001}w_0^2\beta_2\nonumber \\
  & + \frac{1}{6}H_{0003}\beta_2^3 + \frac12 H_{1002}w_0\beta_2^2 
  + \frac12 H_{0102}w_1\beta_2^2 \nonumber \\
  & + \mathcal{O}(|w_1|^3+|w_0w_1^2|+|\beta_2w_1^2|+|\beta_1|\|w\|^2
  +|\beta_1^2|\|w\| + |\beta_1^2| + \|(w,\beta)\|^4), \nonumber \\
K(\beta)={}& K_{10}\beta_1 + K_{01}\beta_2 + \frac{1}{2}K_{02}\beta_2^{2} 
	+ K_{11}\beta_1\beta_2 + K_{03} \frac16 \beta_2^3
  \label{eq:K_expansion} \\
  &+ \mathcal{O}(|\beta_1|^2+|\beta_1||\beta_2|^2 + |\beta_1|^2|\beta_2|  
	+ |\|\beta\|^4), \nonumber \\
\label{eq:theta_expansion}
\theta(w,\beta) ={}& 1 + \theta_{1000}w_0 + \theta_{0001} \beta_2 
				+ \mathcal O\left(|w| + |\beta_2| + \|(w,\beta)\|^2\right).
\end{align}
where $A=f_x(x_0,\alpha_0)$, $J_1=f_{\alpha}(x_0,\alpha_0)$, and
$B,J_2,J_3,C,A_1,A_2$ and $B_1$ are the standard multilinear forms, introduced
for readability.

\begin{remark}
Notice that compared with~\cite{Al-Hdaibat2016} there are four additional
terms in the expansion of $H$, i.e. with  coefficients $H_{0011},H_{1002},H_{0102}$, and
$H_{0003}$, and two additional terms in the expansion of $K$, with  coefficients
$K_{11}$ and $K_{03}$.
\end{remark}

\begin{figure}
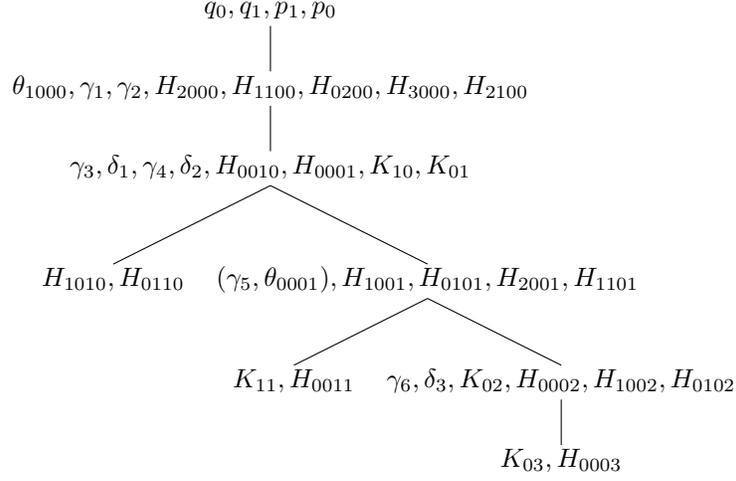

\renewcommand{\qtreeunaryht}{4ex}
\Tree%
[.{$q_0, q_1, p_1, p_0$} 
[.{$\theta_{1000}, \gamma_1, \gamma_2, H_{2000}, H_{1100}, H_{0200}, H_{3000}, H_{2100}$} 
		[.{$\gamma_3, \delta_1, \gamma_4, \delta_2, H_{0010}, H_{0001}, K_{10}, K_{01}$}
			{$H_{1010}, H_{0110}$} 
			[.{$(\gamma_5, \theta_{0001}), H_{1001}, H_{0101}, H_{2001}, H_{1101}$}
				{$K_{11}, H_{0011}$} 
				[.{$\gamma_6, \delta_3, K_{02}, H_{0002}, H_{1002}, H_{0102}$} 
					{$K_{03},H_{0003}$} ] ] ] ] ]
\caption{Schematic overview is which order the coefficients in the expansion of
$H$, $K$ and $\theta$ are derived.}
\end{figure}

\subsubsection{(Generalized) eigenvectors}

We assume that the equilibrium $(x_0, \alpha_0)$ has a double (but not
semisimple) zero eigenvalue, while all other eigenvalues are away from the
imaginary axis. Thus, there exist two real linearly independent (generalized)
eigenvectors, $q_0, q_1 \in \mathbb{R}^{n}$, of $A$, such that 
\begin{equation}
\label{eq:eigenvectors}
Aq_0=0,\qquad Aq_1=q_0,
\end{equation}
and two left (generalized) eigenvectors $p_1^T, p_0^T \in\mathbb{R}^{n}$,
of $A$, such that
\[
p_1 A=0,\qquad p_0 A=p_1.
\]
These vectors can be normalized to satisfy
\[
p_i q_j=\delta_{ij},\qquad i=0,1,\,j=0,1.
\]
As in~\cite{Kuznetsov2005practical}, we impose the condition
\begin{equation}
\label{eq:q0} 
q_0^T q_0=1,\qquad q_1^T q_0=0,
\end{equation}
to uniquely define the vectors $\{q_0,q_1,p_1,p_0\}$ up to a plus or minus sign.

Note that collecting the coefficients of the linear terms in $w$ in the
homological equation are precisely the systems defining the (generalized)
eigenvectors \cref{eq:eigenvectors}. 

\subsubsection{Critical normal form coefficients}%

Collecting the quadratic coefficients in $w$ from the homological
\cref{eq:homological_equation} yields the systems
\begin{align}
-AH_{2000} &= B(q_{0,}q_0) - 2aq_1, \label{eq:AH2000} \\
-AH_{1100} &= B(q_0,q_1) - bq_1 + \theta_{1000}q_0 - H_{2000},  \label{eq:AH1100} \\
-AH_{0200} &= B(q_1,q_1) - 2H_{1100}. \label{eq:AH0200} 
\end{align}

The Fredholm solvability condition for the first two systems yields the well
known expressions
\begin{align*}
a &= \frac12 p_1 B(q_0,q_0), \\
b &= p_1 B(q_0, q_1) + p_0 B(q_0, q_0),
\end{align*}
for the critical coefficients, see for example~\cite{Kuznetsov1999}. By the
non-degeneracy conditions, we have that $ab \neq 0$.

\begin{remark}
Since we assume that $p_1 B(q_0,q_0) \neq 0$ we can use the freedom in the
eigenvectors,
\[
(q_0, q_1) \to c_1 (q_0,q_1), \qquad
(p_1, p_0) \to \frac1{c_1} (p_1, p_0),
\]	
to normalize the critical coefficient
\[
a=p_1 B(q_0, q_0),
\]
to one. Solving for $c_1$ then gives
\[
c_1=\frac1{p_1 B(q_0, q_0)}.
\]
Alternatively, the freedom could have been used to set $b=1$. To have the
situation $a=1$ and $b=\pm1$, as in~\cite{Broer1991}, the coefficient in front
of the constant term in the expansion of $\theta$, i.e., $\theta_{0000}$, should
be used. For convenience, we fixed this constant to $1$.
\end{remark}

Now that \cref{eq:AH2000,eq:AH1100} are solvable, we can define
\begin{align*}
\hat H_{2000} &= -\AINV \left( B(q_0,q_0) - 2aq_1 \right), \\
\hat H_{1100} &= -\AINV \left( B(q_0,q_1) - bq_1 - \hat H_{2000} \right).
\end{align*}
The expression $x=\AINV y$ is defined by solving the non-singular
bordered system
\begin{equation*}
\begin{pmatrix}
A & p_1^T \\ q_0^T & 0
\end{pmatrix}
\begin{pmatrix}
x\\ s
\end{pmatrix}
=
\begin{pmatrix}
y\\ 0
\end{pmatrix},
\end{equation*}
for the unknown $(x,s) \in \mathbb R^{n+1}$ that necessarily satisfies $s = 0$.
The properties of bordered linear systems and their role in numerical
bifurcation analysis are discussed in~\cite{Keller1987Numerical}
and~\cite[Chapter 3]{govaerts2000numerical}.

It follows that the general solutions to the coefficients $H_{2000}$ and
$H_{1100}$ are given by
\begin{align*}
H_{2000} &= \hat H_{2000} + \gamma_1 q_0, \\
H_{1100} &= \hat H_{1100} + \gamma_1 q_1 - \theta_{1000} q_1 + \gamma_2 q_0.
\end{align*}
The constant $\gamma_1$ is determined by the solvability condition from
\cref{eq:AH0200}, which gives
\begin{equation*}
\gamma_1 = p_0  \left( B(q_0,q_1) -\hat H_{2000} \right) 
								+ \frac12 p_1 B(q_1,q_1) + \theta_{1000}.
\end{equation*}
To determine the constant $\gamma_2$ and the coefficient $\theta_{1000}$ we
consider the $w_0^3$ and $w_0^2w_1$ terms in the homological equation. After
some simplification, we obtain
\begin{align}
\label{eq:AH3000}
-A H_{3000} ={}& 3 B(H_{2000},q_0) + C(q_0,q_0,q_0) + 6a \theta_{1000} q_1
									- 6a H_{1100}, \\
\label{eq:AH2100}
-A H_{2100} ={}& -2 a H_{0200}-2 b H_{1100}-H_{3000} + 2 B(H_{1100},q_0) 
								+B(H_{2000},q_1) \\
				& + 2 \theta_{1000} (bq_1 - \theta_{1000}q_0
								+H_{2000})+C(q_0,q_0,q_1). \nonumber
\end{align}
The solvability condition of the first equation determines $\theta_{1000}$ as
\begin{equation}
\label{eq:theta1000}
\theta_{1000} = -\frac1{12a} p_1 \left\{ 
			3B(\hat H_{2000},q_0) + C(q_0,q_0,q_0)
			\right\} + \frac12 p_1 \hat H_{1100}.
\end{equation}
After a rather lengthy calculation, we obtain that $\gamma_2$ is determined by 
\begin{align}
\label{eq:gamma_2}
\gamma_2 &= \frac1{6a} 
\bigg[ p_1\left\{ 2 B(\hat H_{1100},q_0)+B(\hat H_{2000},q_1)
+ C(q_0,q_0,q_1) \right\} \\
  & \qquad +2a p_0 B(q_1,q_1) + 2b p_0 \left( B(q_0,q_1) 
    - \hat H_{2000} \right) \nonumber \\
	& \qquad + p_0 \left( 3B(\hat H_{2000},q_0) 
	  + C(q_0,q_0,q_0) \right) \nonumber \\
	& \qquad + \gamma_1 b - 10 a p_0 \hat H_{1100} + 2 b \theta_{1000} \bigg].
				\nonumber 
\end{align}
Since the systems in \cref{eq:AH0200,eq:AH2100,eq:AH3000} are now all consistent,
we are allowed to take the bordered inverses to obtain
\begin{align}
	H_{0200} ={} & -\AINV \left[B(q_1,q_1) - 2H_{1100} \right], \nonumber          \\
  \label{eq:H3000}
	H_{3000} ={} & -\AINV \left[ 3B(H_{2000},q_0) + C(q_0,q_0,q_0) +
		6a\theta_{1000}q_1 - 6a H_{1100} \right],                            \\
  \label{eq:H2100}
	H_{2100} ={} & -\AINV \left[ -2 a H_{0200}-2 b H_{1100}-H_{3000} + 2
	B(H_{1100},q_0), \right.                                              \\
	             & \left. \quad + B(H_{2000},q_1) + 2 \theta_{1000}
	(bq_1 - \theta_{1000}q_0 +H_{2000})+C(q_0,q_0,q_1) \right]. \nonumber
\end{align}

\subsubsection{Parameter-dependent linear terms}
The coefficients of the linear terms in $\beta$ give the
systems
\begin{equation}
\label{eq:AH0010}
\begin{aligned}
-AH_{0001} &= J_1K_{01}, \\
-AH_{0010} &= J_1K_{10} - q_1.
\end{aligned}
\end{equation}
Since $p_1$ and $J_1$ are known, we can calculate 
\begin{equation*}
    \nu =\begin{pmatrix} \tau_1 \\ \tau_2 \end{pmatrix} := (p_1 J_1)^T.
\end{equation*}
By the transversality condition, the vector $\nu$ is nonzero. It then follows
from the Fredholm alternative that
\begin{equation*}
\begin{aligned}
K_{01}   &= \delta_1\hat K_{01}, \\
H_{0001} &= \delta_1 \left( \hat H_{0001} + \gamma_3 q_0 \right), \\
K_{10}   &= \hat K_{10} + \delta_2 K_{01}, \\
H_{0010} &= \hat H_{0010} + \delta_2 H_{0001} + \gamma_4 q_0.
\end{aligned}
\end{equation*}
where
\begin{equation*}
\begin{aligned}
\hat K_{10} &= \frac1{\|\tau\|^2}\nu, \\
\hat H_{0010} &= \AINV \left(q_1 - J_1\hat K_{10}\right), \\
\hat K_{01} &=
\begin{pmatrix}
	0 & -1 \\ 1 & 0
\end{pmatrix} \hat K_{10}, \\
\hat H_{0001} &= -\AINV J_1\hat K_{01}.
\end{aligned}
\end{equation*}
and $\delta_{1,2}$, $\gamma_{3,4}$ are real constants determined by the
solvability condition of the $w\beta$ terms in the homological equation.
Collecting the corresponding systems in the homological equation yields 
\begin{equation*}
\begin{aligned}
-AH_{1001} &= B(H_{0001},q_0)+A_1(q_0,K_{01}), \\
-AH_{0101} &= B(H_{0001},q_1)+A_1(q_1,K_{01})-H_{1001}-q_1+\theta_{0001}q_0, \\
-AH_{1010} &= B(H_{0010},q_0)+A_1(q_0,K_{10})-H_{1100} +\theta_{1000}q_1,\\
-AH_{0110} &= B(H_{0010},q_1)+A_1(q_1,K_{10})-H_{0200}-H_{1010}.
\end{aligned}
\end{equation*}
The solvability condition for the first two systems yields
\begin{equation*}
\begin{aligned}
\gamma_3 &= -\frac{p_1 \left( 
				B(\hat H_{0001},q_0)+A_1(q_0,\hat K_{01}) \right)}{2a}, \\
\delta_1 &= \frac{1}{p_1 \left(
			B(\hat H_{0001},q_1)+A_1(q_1,\hat K_{01}) \right) + p_0
      \left( B(\hat H_{0001},q_0)+A_1(q_0,\hat K_{01}) \right) + \gamma_3 b},
\end{aligned}
\end{equation*}
while the solvability condition for the latter two systems yields
\begin{equation*}
\begin{aligned}
\gamma_4 &= \frac{p_1H_{1100} - \theta_{1000} - p_1 \left( B(\hat
				H_{0010},q_0)+A_1(q_0,\hat K_{10})\right)}{2a} ,\\
\delta_2 &= -p_1 \left( B(\hat H_{0010},q_1) + A_1(q_1,\hat K_{10}) \right)
				- \gamma_4 b + p_1 H_{0200} \\
	& \quad - p_0 \left( B(\hat H_{0010},q_0)+A_1(q_0,\hat K_{10})-H_{1100}
	\right). \\
\end{aligned}
\end{equation*}
Note that the denominator in $\delta_1$ is nonzero by the transversality
condition.

\subsubsection{Coefficients \texorpdfstring{$H_{1010} \text{ and }
								H_{0110}$}{H1010 and H0110}}

\begin{equation*}
\begin{aligned}
H_{1010} &= -\AINV \left[ B(H_{0010},q_0)+A_1(q_0,K_{10})-H_{1100}
								+\theta_{1000}q_1 \right],\\
H_{0110} &= -\AINV \left[B(H_{0010},q_1)+A_1(q_1,K_{10})-H_{0200}-H_{1010}
\right].
\end{aligned}
\end{equation*}

\subsubsection{Coefficients \texorpdfstring{$(\theta_{0001},\gamma_5),H_{1001},
H_{0101}, H_{2001}, H_{1101}$}{(theta0001,gamma5),H1001,H0101,H2001,H1101}}

Define
\begin{equation*}
\begin{aligned}
\hat H_{1001} &= -\AINV \left[ B(H_{0001},q_0)+A_1(q_0,K_{01}) \right], \\
\hat H_{0101} &= -\AINV \left[
				B(H_{0001},q_1)+A_1(q_1,K_{01})-H_{1001}-q_1 \right], \\
\end{aligned}
\end{equation*}
so that
\begin{equation*}
\begin{aligned}
H_{1001} &= \hat H_{1001} + \gamma_5 q_0, \\
H_{0101} &= \hat H_{0101} + \gamma_5 q_1 - \theta_{0001} q_1. \\
\end{aligned}
\end{equation*}

In order to determine $\gamma_5$ and $\theta_{0001}$, we consider the systems
corresponding to the $w_0^2\beta_2$ and $w_0w_1\beta_2$ terms in the homological
equation. These are given by
\begin{equation}
\label{eq:AH2001_AH1101}
\begin{aligned}
-AH_{2001} &= -2 a H_{0101} + A_1(H_{2000},K_{01}) +
	B(H_{0001},H_{2000}) + 2 B(H_{1001},q_0) \\
				& \qquad + 2 a \theta_{0001} q_1 +
				B_1(q_0,q_0,K_{01}) + C(H_{0001},q_0,q_0), \\
-AH_{1101} &= -b H_{0101} - H_{1100} - H_{2001} + A_1(H_{1100},K_{01}) + \\
				& \qquad \theta_{1000} (H_{1001} + q_1 - \theta_{0001} q_0) + 
				B(H_{0001},H_{1100}) + B(H_{0101},q_0) + \\
				& \qquad B(H_{1001},q_1) + \theta_{0001} (H_{2000} + b q_1 -
				\theta_{1000} q_0) + B_1(q_0,q_1,K_{01}) \\
				& \qquad + C(H_{0001},q_0,q_1).
\end{aligned}
\end{equation}
The Fredholm solvability condition leads to the following system to be solved
\begin{equation}
\label{eq:gamma_5_theta0001}
\begin{pmatrix}
				-2a & -4a \\
				 -b &  -b 
\end{pmatrix}
\begin{pmatrix}
				\gamma_5 \\
				\theta_{0001}
\end{pmatrix}
=
\begin{pmatrix}
				\zeta_1 \\
				\zeta_2 
\end{pmatrix},
\end{equation}
where
\begin{equation}
\label{eq:zeta1_zeta2}
\begin{aligned}
\zeta_1 &=  p_1 \left[ -2 a\hat H_{0101}  
			+ A_1(H_{2000},K_{01}) + B(H_{0001},H_{2000}) \right.  \\
			& \left. \qquad + 2 B(\hat H_{1001},q_0)
			+ B_1(q_0,q_0,K_{01}) + C(H_{0001},q_0,q_0) \right], \\
\zeta_2 &= p_1 \left[ -b \hat H_{0101} - H_{1100}  +
				A_1(H_{1100},K_{01}) + \right. \\
				& \qquad \theta_{1000} (\hat H_{1001} + q_1) + 
				B(H_{0001},H_{1100}) + B(\hat H_{0101},q_0) + \\
				& \qquad \left. B(\hat H_{1001},q_1) + B_1(q_0,q_1,K_{01})
				+ C(H_{0001},q_0,q_1) \right]  \\
				& \qquad + p_0 \left[ -2 a \hat H_{0101} + A_1(H_{2000},K_{01}) +
				B(H_{0001},H_{2000}) \right. \\
				& \qquad + \left. 2 B( \hat H_{1001},q_0) +
				B_1(q_0,q_0,K_{01}) + C(H_{0001},q_0,q_0) \right] .
\end{aligned}
\end{equation}	
Notice that the matrix has a non-vanishing determinant by the non-degeneracy
condition. Now that the systems in \cref{eq:AH2001_AH1101} are solvable we
obtain
\begin{equation}
\label{eq:H2001_H1101}
\begin{aligned}
H_{2001} &= -\AINV \left[ -2 a H_{0101} + A_1(H_{2000},K_{01}) +
	      B(H_{0001},H_{2000})  \right. \\
				& \qquad \left. + 2 B(H_{1001},q_0) + 2 a \theta_{0001} q_1 +
				B_1(q_0,q_0,K_{01}) + C(H_{0001},q_0,q_0) \right], \\
H_{1101} &= -\AINV \left[ -b H_{0101} - H_{1100} - H_{2001} +
				A_1(H_{1100},K_{01}) + \right. \\
				& \qquad \theta_{1000} (H_{1001} + q_1 - \theta_{0001} q_0) + 
				B(H_{0001},H_{1100}) + B(H_{0101},q_0) + \\
				& \qquad B(H_{1001},q_1) + \theta_{0001} (H_{2000} + b q_1 -
								\theta_{1000} q_0) + B_1(q_0,q_1,K_{01}) \\
				& \qquad  \left. + C(H_{0001},q_0,q_1) \right].
\end{aligned}
\end{equation}

\subsubsection{Coefficients \texorpdfstring{$K_{11} \text{ and } H_{0011}$}{K11
				and H0011}}

Collecting the systems corresponding to the $\beta_1 \beta_2$ term in the
homological equation yields
\begin{equation}
\label{eq:AH0011}
\begin{aligned}
-A H_{0011} &= J_1 K_{11} + A_1(H_{0001},K_{10}) + A_1(H_{0010},K_{01}) \\
						& \qquad +
						B(H_{0001},H_{0010})+J_2(K_{01},K_{10})+\theta_{0001}q_1-H_{0101}.
\end{aligned}
\end{equation}
Using the identity 
\begin{equation*}
   p_1J_1K_{10}=1 
\end{equation*}
from the second system in \cref{eq:AH0010} combined with the solvability
condition yields
\begin{equation}
\begin{aligned}
K_{11}={}& -p_1\left[A_1(H_{0001},K_{10})+A_1(H_{0010},K_{01}) \right. \nonumber\\
				 &\left. + B(H_{0010},H_{0001}) +
				 J_2(K_{10},K_{01}) + \theta_{0001} q_1 - H_{0101} \right]K_{10}.
\end{aligned}
\end{equation}
It follows that 
\begin{equation}
\begin{aligned}
H_{0011} &= -\AINV \left[ J_1 K_{11} + A_1(H_{0001},K_{10}) +
				A_1(H_{0010},K_{01}) \right. \\
				 & \qquad \left. +
                    B(H_{0001},H_{0010})+J_2(K_{01},K_{10})+\theta_{0001}q_1-H_{0101}
				 \right].
\end{aligned}
\end{equation}

\subsubsection{Coefficients
				\texorpdfstring{$K_{02},H_{0002},H_{1002},H_{0102}$}
				{H0002,K02,H1002,H0102}}

The systems corresponding to the $\beta_2^2, w_0\beta_2^2$ and $w_1\beta_2^2$,
terms in the homological equation yields
\begin{equation}
\label{eq:AH0002_AH1002_AH0102}
\begin{aligned}
-A H_{0002}={}&  J_1K_{02} + 2A_1(H_{0001},K_{01})
							 + B(H_{0001},H_{0001}) + J_2(K_{01},K_{01}), \\
-A H_{1002}={}& 2A_1(H_{1001},K_{01}) + A_1(q_0,K_{02}) + A_2(q_0,K_{01},K_{01})
					 \\ & + B(q_0,H_{0002}) + 2B(H_{0001},H_{1001}) +
					 2B_1(q_0,H_{0001},K_{01}) \\
  & + C(q_0,H_{0001},H_{0001}), \\
-A H_{0102}={}& 2A_1(H_{0101},K_{01}) + A_1(q_1,K_{02}) + A_2(q_1,K_{01},K_{01}) \\
  & + B(q_1,H_{0002}) + 2B(H_{0001},H_{0101}) + 2B_1(q_1,H_{0001},K_{01}) \\
	& + C(q_1,H_{0001},H_{0001}) +2\theta_{0001} (H_{1001} + q_1 -
	  \theta_{0001}q_0)  \\
  & - 2H_{0101} - H_{1002}.
\end{aligned}
\end{equation}
The first system is solved similarly as \cref{eq:AH0011}. However, here we need
to use hypernormalization in order to make the second and third systems
consistent. Thus, we define
\begin{equation*}
\begin{aligned}
\hat K_{02}={}&-p_1\left[2A_1(H_{0001},K_{01})+B(H_{0001},H_{0001})
				+J_2(K_{01},K_{01})\right]K_{10}, \\
\hat H_{0002}={}& -\AINV\left[ J_1 \hat K_{02} + 2A_1(H_{0001},K_{01})
							 + B(H_{0001},H_{0001}) + J_2(K_{01},K_{01})\right].
\end{aligned}
\end{equation*}
Then the general solutions can be written as
\begin{equation*}
\begin{aligned}
				K_{02}={}& \hat K_{02} + \delta_3 K_{01}, \\
				H_{0002}={}& \hat H_{0002} +  \delta_3 H_{0001} + \gamma_6 q_0.
\end{aligned}
\end{equation*}
Substituting these two expressions into \cref{eq:AH0002_AH1002_AH0102} and
using the solvability condition yields
\begin{equation*}
\begin{aligned}
				\gamma_6 ={}& -\frac1{2a} p_1 \left[ 2A_1(H_{1001},K_{01}) +
				A_1(q_0,\hat K_{02}) + A_2(q_0,K_{01},K_{01}) \right. \\
	& \qquad + B(q_0,\hat H_{0002}) + 2B(H_{0001},H_{1001}) +
				2B_1(q_0,H_{0001},K_{01}) \\
  & \left. \qquad + C(q_0,H_{0001},H_{0001}) \right],  \\
\delta_3 ={}& -p_1 \left[ 2A_1(H_{0101},K_{01}) + A_1(q_1,\hat K_{02}) +
    A_2(q_1,K_{01},K_{01}) \right. \\
	& \qquad + B(q_1,\hat H_{0002}) + 2B(H_{0001},H_{0101}) +
	    2B_1(q_1,H_{0001},K_{01}) \\
	& \left. \qquad +  C(q_1,H_{0001},H_{0001}) +2\theta_{0001} (H_{1001} + q_1) -
	  2H_{0101} \right] \\
	& \qquad - p_0 \left[ 2A_1(H_{1001},K_{01}) + A_1(q_0,\hat K_{02}) +
    A_2(q_0,K_{01},K_{01}) \right. \\ 
	& \qquad + B(q_0,\hat H_{0002}) + 2B(H_{0001},H_{1001}) +
    2B_1(q_0,H_{0001},K_{01}) \\
  & \left. \qquad + C(q_0,H_{0001},H_{0001}) \right] - \gamma_6 b.
\end{aligned}
\end{equation*}
Now that the last two systems in \cref{eq:AH0002_AH1002_AH0102} are consistent,
we obtain
\begin{equation*}
\begin{aligned}
H_{1002}={}& -\AINV \left[ 2A_1(H_{1001},K_{01}) + A_1(q_0,K_{02}) +
				A_2(q_0,K_{01},K_{01}) \right. \\ 
			  & + B(q_0,H_{0002}) + 2B(H_{0001},H_{1001}) +
				2B_1(q_0,H_{0001},K_{01})  \\
				& + \left. C(q_0,H_{0001},H_{0001}) \right], \\
H_{0102}={}& -\AINV \left[ 2A_1(H_{0101},K_{01}) + A_1(q_1,K_{02}) +
				A_2(q_1,K_{01},K_{01}) \right. \\
  & + B(q_1,H_{0002}) + 2B(H_{0001},H_{0101}) + 2B_1(q_1,H_{0001},K_{01}) \\
	& + C(q_1,H_{0001},H_{0001}) +2\theta_{0001} (H_{1001} + q_1 -
	  \theta_{0001}q_0)  \\
	& \left. - 2H_{0101} - H_{1002} \right].
\end{aligned}
\end{equation*}

\subsubsection{Coefficients \texorpdfstring{$K_{03} \text{ and } H_{0003}$}{K03
				and H0003}}

Collecting the systems corresponding to the $\beta_2^3$ term in the
homological equation yields
\begin{equation*}
\begin{aligned}
-A H_{0003} ={}& J_1 K_{03} + A_1(H_{0001},K_{02}) + A_1(H_{0002},K_{01})
				+ 2 (A_1(H_{0001},K_{02}) \\
				& + A_1(H_{0002},K_{01}) + 3 B(H_{0001},H_{0002}) + 3 J_2(K_{01},K_{02})
				\\
				& + 3 A_2(H_{0001},K_{01},K_{01}) + 3 B_1(H_{0001},H_{0001},K_{01}) \\
				& + C(H_{0001},H_{0001},H_{0001}) + J_3(K_{01},K_{01},K_{01}).
\end{aligned}
\end{equation*}
This equation is solved similarly as equation \cref{eq:AH0011}. We obtain
\begin{equation*}
\begin{aligned}
K_{03}={}& -p_1 \left[ A_1(H_{0001},K_{02}) + A_1(H_{0002},K_{01})
				+ 2 A_1(H_{0001},K_{02}) \right. \\
				& + 2 A_1(H_{0002},K_{01}) + 3 B(H_{0001},H_{0002}) + 3 J_2(K_{01},K_{02})
				\\
				& + 3 A_2(H_{0001},K_{01},K_{01}) + 3 B_1(H_{0001},H_{0001},K_{01}) \\
				& \left. + C(H_{0001},H_{0001},H_{0001}) +
				J_3(K_{01},K_{01},K_{01}) \right] K_{10}, \\
H_{0003} ={}& -\AINV \left[ J_1 K_{03} + A_1(H_{0001},K_{02}) +
				A_1(H_{0002},K_{01}) + 2 A_1(H_{0001},K_{02}) \right. \\
				& + 2 A_1(H_{0002},K_{01}) + 3 B(H_{0001},H_{0002}) + 3 J_2(K_{01},K_{02})
				\\
				& + 3 A_2(H_{0001},K_{01},K_{01}) + 3 B_1(H_{0001},H_{0001},K_{01}) \\
				& \left. + C(H_{0001},H_{0001},H_{0001}) +
				J_3(K_{01},K_{01},K_{01})\right] .
\end{aligned}
\end{equation*}

\subsection{Center manifold reduction for smooth normal form} 
\label{sec:center-manifold-reduction-without-time-reparametrization}
If we do not allow for a reparametrization of time, we can no longer consider the
normal form \cref{eq:normal_form_orbital}. Instead, we need to use the smooth
normal form as introduced in~\cite{Gray-Scott2015}, i.e., equation
\cref{eq:BT_smooth_nf}. Applying the blowup transformation
\begin{equation}
\label{eq:blowup_smooth}				
\beta_1 = - \frac4a \epsilon^4, \quad 
\beta_2 = \frac b a \tau \epsilon^2, \quad 
w_0 = \frac1a u \epsilon^2, \quad
w_1 = \frac1a v \epsilon^3, \quad
s = \epsilon t, \quad (\epsilon \neq 0),
\end{equation}
to the smooth normal form, we obtain the second-order nonlinear differential
equation 
\begin{equation}
\label{eq:second_order_nonlinear_oscillator_smooth_normalform}
				\ddot u = -4 + u^2 + \frac ba \dot u \left( u + \tau \right)\epsilon 
				+ \frac1{a^2} u^2\left(\tau b a_1 + d u \right) \epsilon^2
        + \frac1{a^2} u\dot u \left( \tau b b_1 + e u \right) \epsilon^3
				+ \mathcal{O}(\epsilon^4).
\end{equation}
Here the dot represents the derivative with respect to $s$. 

Note that, by using hypernormalization, we can still simplify the smooth normal form.
Indeed, as already remarked in~\cite{Kuznetsov2005practical} the coefficient
$e$ can be set to zero.  Furthermore, it can be seen from the system in
\cref{eq:gamma_5_theta0001} that either the coefficient $a_1$ or $b_1$ can also
be removed. The natural choice here is for the coefficient $b_1$ to
be set to zero in the normal form. The parameter-dependent center manifold
transformation in this situation is obtained by first setting the coefficients
$\theta_{1000}$ and $\theta_{0001}$ to zero in
\cref{subsec:center_manifold_tranformation_orbital}. Equation \cref{eq:AH3000}
becomes
\begin{equation}
    \label{eq:AH3000_smooth}
    -A H_{3000} = 3 B(H_{2000},q_0) + C(q_0,q_0,q_0) - 6d q_1 - 6a H_{1100},
\end{equation}
and equation \cref{eq:theta1000} is removed. After $\gamma_2$ in
\cref{eq:gamma_2} has been calculated, the Fredholm solvability condition yields
that
\begin{equation*}
    d = \frac16 p_1 \left[ 3B(H_{2000},q_0) + C(q_0,q_0,q_0)  - 6 a \hat H_{1100}\right].
\end{equation*}
Now that \cref{eq:AH3000_smooth} is consistent, we can replace \cref{eq:H3000}
with
\begin{equation*}
	H_{3000} = -\AINV \left[ 3B(H_{2000},q_0) + C(q_0,q_0,q_0) - 6dq_1 - 6a H_{1100} \right].
\end{equation*}

Next, we replace the first equation in \cref{eq:AH2001_AH1101} with
\begin{equation}
\label{eq:AH2001}
\begin{aligned}
-AH_{2001} ={}& - 2 a_1 q_1 -2 a H_{0101} + A_1(H_{2000},K_{01}) + B(H_{0001},H_{2000}) \\
              & + 2 B(H_{1001},q_0) + B_1(q_0,q_0,K_{01}) + C(H_{0001},q_0,q_0)
\end{aligned}
\end{equation}
and the system in \cref{eq:gamma_5_theta0001} becomes the single equation
\[
\gamma_5 = -\frac{\zeta_2}{b}.
\]
Here $\zeta_2$ is still given by the second equation in \cref{eq:zeta1_zeta2}
(with $\theta_{1000}$ still set to zero), while $\zeta_1$ is no longer needed.
Applying the Fredholm solvability condition to \cref{eq:AH2001} yields 
\begin{equation*}
\begin{aligned}
    a_1 &= \frac{1}{2} \left[-2 a H_{0101} + A_1(H_{2000},K_{01}) +
	B(H_{0001},H_{2000}) + 2 B(H_{1001},q_0) \right. \\
        & \left. \qquad + B_1(q_0,q_0,K_{01}) + C(H_{0001},q_0,q_0) \right].
\end{aligned}
\end{equation*}
Since \cref{eq:AH2001} is now consistent, we can replace the first equation in
\cref{eq:H2001_H1101} with
\begin{equation*}
\begin{aligned}
    H_{2001} &{}= -\AINV \left[ -2 a H_{0101} + A_1(H_{2000},K_{01}) + B(H_{0001},H_{2000})  \right. \\
				& \left. + 2 B(H_{1001},q_0) - 2 a_1 q_1 + B_1(q_0,q_0,K_{01}) + C(H_{0001},q_0,q_0) \right].
\end{aligned}
\end{equation*}
The remaining systems and equations are unchanged.

To compare the homoclinic predictors under different transformations, we also
provide the parameter-dependent center manifold transformation for the smooth
normal form \cref{eq:BT_smooth_nf} without transforming away the coefficients
$e$ and $b_1$. In this case, in addition to the modification given above, we also set
$\gamma_2$ and $\gamma_5$ to zero. Then the system in \cref{eq:AH2100} becomes
\begin{equation}
\label{eq:AH2100smooth}
\begin{aligned}
-A H_{2100} ={}& - 2 e q_1 -2 a H_{0200}-2 b H_{1100}-H_{3000} + 2 B(H_{1100},q_0) \\
				& + B(H_{2000},q_1) + C(q_0,q_0,q_1),
\end{aligned}
\end{equation}
while the second systems in \cref{eq:AH2001_AH1101} should be replaced with
\begin{equation}
\label{eq:AH1101smooth}
\begin{aligned}
-AH_{1101} ={}& -b_1 q_1 -b H_{0101} - H_{1100} - H_{2001} + A_1(H_{1100},K_{01}) + \\
              &+ B(H_{0001},H_{1100}) + B(H_{0101},q_0) + B(H_{1001},q_1) \\
              &+ B_1(q_0,q_1,K_{01}) + C(H_{0001},q_0,q_1).
\end{aligned}
\end{equation}
Applying the Fredholm solvability condition to these equations gives
\begin{equation*}
\begin{aligned}
e ={}& \frac{1}{2} p_1 \left[ -2 a H_{0200}-2 b H_{1100}-H_{3000} + 2 B(H_{1100},q_0) \right.  \\
     & \left. + B(H_{2000},q_1) + C(q_0,q_0,q_1) \right], \\ 
b_1 ={}& p_1 \left[ -b H_{0101} - H_{1100} - H_{2001} + A_1(H_{1100},K_{01}) + B(H_{0001},H_{1100})  \right. \\
      & \left. + B(H_{0101},q_0) + B(H_{1001},q_1) + B_1(q_0,q_1,K_{01}) + C(H_{0001},q_0,q_1) \right].
\end{aligned}
\end{equation*}
Now that \cref{eq:AH2100smooth,eq:AH1101smooth} are consistent, we can replace
\cref{eq:H2100} and the second system in \cref{eq:H2001_H1101} with
\begin{align*}
	H_{2100} ={} & -\AINV \left[ -2e q_1 - 2 a H_{0200}-2 b H_{1100}-H_{3000} + 2 B(H_{1100},q_0) \right. \\
	             & \left. \quad + B(H_{2000},q_1) + C(q_0,q_0,q_1) \right]. \nonumber
\end{align*}
and
\begin{equation*}
\begin{aligned}
H_{1101} ={}& -\AINV \left[ -b_1 q_1 -b H_{0101} - H_{1100} - H_{2001} + A_1(H_{1100},K_{01}) + \right. \\
				& + B(H_{0001},H_{1100}) + B(H_{0101},q_0) + B(H_{1001},q_1) \\
				& \left. + B_1(q_0,q_1,K_{01}) + C(H_{0001},q_0,q_1) \right],
\end{aligned}
\end{equation*}
respectively. The remaining systems and equations are unchanged.

\begin{remark}
The derivation in~\cite{Al-Hdaibat2016,Kuznetsov2014improved} leads to a `big'
system in which equations need to be solved simultaneously. The derivation
presented here does not involve a `big' system, making the expressions more
suitable to implement for infinitely-dimensional ODEs generated by partial and
delay differential equations, to which the (parameter-dependent) center
manifold theorem applies.
\end{remark}

\section{Homoclinic asymptotics}
\label{sec:asymptotics}

In this section, we derive third-order asymptotics to the homoclinic solution
near the generic Bogdanov-Takens point. We revisited the standard regular
perturbation method, but with a different phase condition. In
\cref{sec:examples} we will show that this improves the accuracy of the
homoclinic approximation in the normal form. Then, in
\cref{sec:PolynomailLindstedtPoincare} we revisited the Lindstedt-Poincar\'e
method. By an additional non-linear time transformation, we obtain a very simple
algorithm to approximate the homoclinic solution.

\subsection{The Regular Perturbation Method with norm minimizing phase condition}
\label{sec:RPM_norm_minimizing_phase condition}
For $\epsilon=0$, \cref{eq:second_order_nonlinear_oscillator} is a Hamiltonian
system with the first integral
\begin{equation*}
    H(u,\dot u) = \frac12 \dot u^2+4u-\frac13 u^3.
\end{equation*}
The Hamiltonian system has the well-known explicit homoclinic solution
$(u_0(s),\dot u_0(s))$ given by
\[
    u_{0}(s) = 6 \tanh^2(s) - 4.
\]
Thus, for $(u, \epsilon, \tau) = (u_0, 0, \tau)$ there exists a trivial branch
of homoclinic orbits in \cref{eq:second_order_nonlinear_oscillator}.
In~\cite{Beyn_1994} it is shown that there exists a bifurcation point at
$\tau=\frac{10}{7}$ from which a smooth non-trivial branch emanates
transversally. Parametrizing this branch by $\epsilon$, we formally have
\begin{equation}
    \label{eq:u_i_tau_i_RPM}
    u(s,\epsilon) = \sum_{i\geq 0} u_i(s) \epsilon^i, 
    \qquad 
    \tau(\epsilon) = \sum_{i\geq 0} \tau_i \epsilon^i.
\end{equation}
Substituting \cref{eq:u_i_tau_i_RPM} into
\cref{eq:second_order_nonlinear_oscillator} and collecting equal terms in
$\epsilon$ yields the following differential equations to be solved:
\begin{align}
  &\ddot u_0 - u_0^2 +4 = 0, \label{eq:z0} \\
  &\ddot u_i - 2 u_0 u_i
    =
    z_i, 
        \label{eq:regular_perturbation_method_equation_epsilon_i} \\
  &\dot u_i (\pm \infty) = \ddot u_i (\pm \infty) = 0,
        \qquad i\in \mathbb N. \nonumber
\end{align}
Here $z_i$ dependents on the sums and products of $u_j, \dot u_j$ and $\tau_{j-1}$
for $0\leq j < i$.  Multiplying equation
\cref{eq:regular_perturbation_method_equation_epsilon_i} by $\dot u$ and
integrating from $s_0$ to $s$ yields,
\[
  \int_{s_0}^s \dot u_0 \ddot u_i - 2 \dot u_0 u_0 u_i \;dx = \int_{s_0}^s \dot
  u_0 z_i \; dx.
\]
Using integration by parts twice then gives
\begin{equation}
  \label{eq:regular_perturbation_method_integration_by_parts}
  \left. \left( \dot u_0 \dot u_i - u_i \ddot u_0 \right)\right|_{s_0}^s = \int_{s_0}^s \dot
  u_0 z_i \; dx.
\end{equation}
Notice that solutions $(\dot u_i(s),\ddot u_i(s))$ must vanish at plus and minus
infinity. We obtain that $\tau_{i-1}$ is given by the condition
\begin{equation*}
    0 = \int_{-\infty}^\infty \dot u_0 z_i \; dx.
\end{equation*}
To simplify the equations that follow below we would like to
use~\cite[Proposition 4.2]{Beyn_1994}. 

However, we noticed that the proposition is not precise enough for the
conclusion to hold. Indeed, the proof relies on the uniqueness of the
non-trivial branch of homoclinic orbits. However, we see that the left-hand
side \cref{eq:regular_perturbation_method_equation_epsilon_i} is invariant
under the transformation
\[
u_i \to u_i + \gamma \dot u_0, \qquad \gamma \in\mathbb R, \; i\in\mathbb N.
\]
We, therefore, slightly modify the proposition with an additional assumption.

\begin{proposition}
    \label{proposition:symmetry}
    Assume that the perturbed Hamiltonian system
    \cref{eq:second_order_nonlinear_oscillator} is obtained from the normal form
    \cref{eq:normal_form_orbital} by the singular rescaling
    \cref{eq:blowup}. Then the non-trivial branch of homoclinic solutions 
    \[
        (u(s,\epsilon), \dot u(s, \epsilon), \epsilon, \tau(\epsilon)),
            \qquad \epsilon<|\epsilon_0|,
    \]
    for some $\epsilon_0>0$ satisfies
    \begin{equation}
        \label{eq:tau}
        \tau(\epsilon) = \tau(-\epsilon).
    \end{equation}
    Furthermore, if the solutions $u_i$ are even functions for $i$ even, then 
    the solution $u$ contains the additional symmetry
    \[
        u(s,-\epsilon) = u(-s,\epsilon)
    .\] 
\end{proposition}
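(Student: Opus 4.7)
The plan is to exploit a discrete symmetry of the perturbed equation $\cref{eq:second_order_nonlinear_oscillator}$ together with the local uniqueness of the non-trivial branch, and then to use the hypothesis of the proposition to pin down the phase.

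First I would observe that the rescaled equation is invariant under the involution $\iota\colon(s,\epsilon)\mapsto(-s,-\epsilon)$. The leading part is clearly invariant since $\dot u$ changes sign under $s\to -s$ and the $\epsilon$ multiplier absorbs that sign. To handle the $\mathcal O(\epsilon^4)$ tail without writing it out explicitly, I would return to the normal form $\cref{eq:normal_form_orbital}$ and read off from the singular rescaling $\cref{eq:blowup}$ that flipping $\epsilon$ flips $s$ and flips $w_1=\dot w_0$ while leaving $w_0$, $\beta_1$, $\beta_2$, and $\eta$ unchanged; this is exactly the natural time-reversal action on the normal form, so every term of $\cref{eq:normal_form_orbital}$, and hence of $\cref{eq:second_order_nonlinear_oscillator}$ to all orders, is $\iota$-invariant.

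Next I would use this invariance together with local uniqueness of the non-trivial homoclinic branch (up to time translation). Given $(u(\,\cdot\,,\epsilon),\tau(\epsilon))$, the function $\tilde u(s):=u(-s,-\epsilon)$ is a homoclinic orbit of the equation at perturbation value $\epsilon$ with scalar parameter $\tau(-\epsilon)$. Since the branch is uniquely parametrized by $\epsilon$ up to a shift in $s$, and $\tau$ is itself shift-invariant, we must have $\tau(\epsilon)=\tau(-\epsilon)$. This is the first assertion, and it requires no assumption on the $u_i$.

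For the second assertion I would expand $u(s,\epsilon)=\sum_i u_i(s)\epsilon^i$ and note that $u(-s,\epsilon)=u(s,-\epsilon)$ is equivalent to the parity relations $u_i(-s)=(-1)^i u_i(s)$. I would prove these by induction on $i$: the base case $u_0(s)=6\tanh^2 s-4$ is even, and assuming the parity pattern through order $i-1$ together with $\tau_j=0$ for odd $j<i$, each contribution to the right-hand side $z_i$ of $\cref{eq:regular_perturbation_method_equation_epsilon_i}$---the products $u_p u_q$ with $p+q=i$, the products $\dot u_p u_q$ with $p+q=i-1$, the term $\dot u_0\tau_{i-1}$, and the $\mathcal O(\epsilon^4)$ contributions whose parity is controlled by the normal-form argument above---carries parity $(-1)^i$. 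The particular solution of $\cref{eq:regular_perturbation_method_equation_epsilon_i}$ then has parity $(-1)^i$; for odd $i$ the homogeneous freedom $\gamma\dot u_0$ is itself odd so $u_i$ is automatically odd, while for even $i$ the hypothesis of the proposition forces $\gamma=0$ and keeps $u_i$ even.

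The hardest point will be rigorously justifying the $\mathcal O(\epsilon^4)$ parity, which I would handle once and for all at the normal-form level by tracking the action of $\iota$ through $\cref{eq:blowup}$, rather than expanding term-by-term in the rescaled $(u,s)$-coordinates. The remainder of the argument is a routine induction on order combined with the solvability condition $\int_{-\infty}^{\infty}\dot u_0 z_i\,dx=0$.
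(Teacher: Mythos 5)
Your high-level structure matches the paper's: establish the $D_0$-reversing symmetry of the rescaled flow under $\epsilon\to-\epsilon$, then combine it with the time-translation freedom in the non-trivial branch to obtain both conclusions. The paper encodes the second part in the single identity $u(s,\epsilon)=u(-s,-\epsilon)+\gamma(\epsilon)\dot u_0(s)$ and compares coefficients, while you carry out an equivalent order-by-order parity induction on the $u_i$ and $\tau_i$; those routes are interchangeable.

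There is, however, a genuine gap in your justification of the $\iota$-invariance of the $\mathcal O(\epsilon^4)$ tail---precisely the step you flag as the hardest. You claim that pulling $\iota\colon(s,\epsilon)\mapsto(-s,-\epsilon)$ back through \cref{eq:blowup} flips $w_1$ while fixing $w_0$, $\beta$ and $\eta$, and that this is ``the natural time-reversal action on the normal form.'' Neither statement holds. The normal form \cref{eq:normal_form_orbital} is \emph{not} time-reversible: under $(w_0,w_1,\eta)\mapsto(w_0,-w_1,-\eta)$ the terms $\beta_2 w_1$ and $bw_0w_1$ in the $\dot w_1$-equation change sign while $\beta_1+aw_0^2$ does not, so no appeal to reversibility of the normal form can work. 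And the pullback is not what you describe: since $v=\dot u$ flips along with $s\mapsto -s$, the relevant map is $(u,v,s,\epsilon)\mapsto(u,-v,-s,-\epsilon)$, under which
\[
w_1 = \frac{a^2}{b^3}\,v\,\epsilon^3 \ \mapsto\ \frac{a^2}{b^3}(-v)(-\epsilon)^3 = w_1,\qquad
\eta = \frac{b}{a}\,\frac{s}{\epsilon}\ \mapsto\ \frac{b}{a}\,\frac{-s}{-\epsilon} = \eta,
\]
with $w_0,\beta_1,\beta_2$ likewise unchanged. The pullback of $\iota$ through \cref{eq:blowup} is therefore the \emph{identity} on the normal-form side, and that is the true mechanism: the blowup at $\epsilon$ and at $-\epsilon$ reproduces the same normal-form system (the $\epsilon$-exponents in \cref{eq:blowup} are effectively all even once the odd power in $w_1$ is paired with the odd power in $s$), and the discrepancy between the two rescalings is exactly $D_0$ composed with time-reversal---the flow identity the paper invokes. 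Equivalently, one can check directly that the rescaled remainder $R$ coming from $w_0^2 w_1 h + w_1^2 Q$ satisfies $R(u,-v,-\epsilon,\tau)=R(u,v,\epsilon,\tau)$, because every odd power of $\epsilon$ in it is paired with an odd power of $v$. Once you replace the time-reversibility appeal with this observation, the remainder of your proposal goes through.
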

\begin{proof}
    The proof follows almost entirely~\cite[Proposition 4.2]{Beyn_1994}. Thus,
    it can be shown that the transformation \cref{eq:blowup} induces the
    symmetry
    \[
        \varphi(s, D_0 (u^0, \dot u^0)^T, -\epsilon, \tau)
        =
        D_0 \varphi(-s, (u^0, \dot u^0)^T, \epsilon, \tau)
    \] 
    on the flow $\varphi$ of \cref{eq:second_order_nonlinear_oscillator}, where 
    \[
        D_0 = \begin{pmatrix} 1 & 0 \\ 0 & -1 \end{pmatrix}.
    \] 
    From this relation one can then conclude that if $(u(s,\epsilon), \dot
    u(s,\epsilon), \epsilon, \tau(\epsilon))$ is a homoclinic solution to
    \cref{eq:second_order_nonlinear_oscillator} then so is $(\tilde
    u(-s,\epsilon), -\dot{\tilde{u}}(-s,\epsilon), -\epsilon, \tau(\epsilon))$.
    The proof in~\cite{Beyn_1994} then finishes with the remark that these two
    homoclinic solutions must be equal by the uniqueness of the non-trivial
    branch. However, by the \emph{non-uniqueness} of the non-trivial branch, we
    obtain the relation
    \[
        (u(s,\epsilon), \epsilon, \tau(\epsilon))
        =
        (u(-s,-\epsilon), \epsilon, \tau(-\epsilon))
        +
        (\gamma(\epsilon) \dot u_0(s), \epsilon, 0),
    \]
    where
    \begin{equation}
        \label{eq:gamma}
        \gamma(\epsilon) = \sum_{i\geq 1} \gamma_i \epsilon^i,
        \qquad
        \gamma_i \in \mathbb R.
    \end{equation}
    Thus, $\tau$ is indeed an even function of $\epsilon$. Using the expansion
    for $u$ from \cref{eq:u_i_tau_i_RPM} we see that by inspecting the
    coefficients of equal powers in $\epsilon$ we only need to impose that
    $\gamma_i=0$ for $i$ even and then the assertion follows.
\end{proof}

It, therefore, follows from \cref{proposition:symmetry} together with equality
\cref{eq:regular_perturbation_method_integration_by_parts} that the condition for
solving $\tau_{i-1}$ in \cref{eq:tau_condition_RP} simplifies to
\begin{equation}
    \label{eq:tau_condition_RP}
    0 = \int_0^\infty \dot u_0 z_i \; dx.
\end{equation}
for $i$ even, whereas $\tau_i=0$ for $i$ odd.

From \cref{eq:regular_perturbation_method_integration_by_parts} we obtain the
solution
\begin{equation}
    \label{eq:u_i_RP}
    u_i = \dot u_0 \int \frac{1}{\dot u_0^2} \int \dot u_0 z_i \; dx \; dx,
        \qquad i \in \mathbb N,
\end{equation}
or
\begin{equation*}
    u_i = \left( \dot u_0 \int \frac{1}{\dot u_0^2} \, dx \right) \int \dot u_0 z_i(u,\dot u,\tau) \, dx
-  \dot u_0 \int \left( \int \frac{1}{\dot u_0^2} \, dx \right) \dot u_0 z_i(u,\dot u,\tau) \, dx. 
\end{equation*}
From \cref{eq:u_i_RP} we see that there are two integration constants involved.
The first integration constant, originating from the inner integral, is needed
to ensure the boundedness of the solution. The second integration constant
introduces precisely the freedom 
\begin{equation*}
    u_i \rightarrow u_i + \gamma_i \dot u_0,  \qquad i\in\mathbb N,
\end{equation*}
with $\gamma_i\in\mathbb{R}$ constants. In~\cite{Kuznetsov2014improved} the
condition
\begin{equation}
    \label{eq:RP_phase_condtion_unnatural}
    \dot u_i(0) = 0
\end{equation}
is imposed to ensure the uniqueness of the solution.  This phase condition is also
used in~\cite{Beyn_1994} a theoretical setting. However, a more natural phase
condition would be to minimize the $L^2$-distance between the current and
previous solution obtained from the regular perturbation method. This
phase condition is also used in~\cite{Doedel1986auto, Champneys1996,
DeWitte2012,Doedel@1989} for numerical continuation of heteroclinic and
homoclinic orbits. Using the $L^2$ phase condition yields
\begin{equation*}
  \int_{-\infty}^\infty 
        \langle 
            (\dot u_0(s), \ddot u_0(s)) (u_{i}(s) + \gamma_i \dot u_0(s),
            \dot u_{i}(s) + \gamma_i \ddot u_0(s)) 
        \rangle
        \, ds = 0,
    \qquad i \in \mathbb{N}.
\end{equation*}
By \cref{proposition:symmetry} this phase condition is
equivalent to the condition that
\begin{equation}
     \label{eq:u_i_L2_phase_condition}
  \int_0^\infty 
        \langle
        (\dot u_0(s), \ddot u_0(s)) (u_{i}(s) + \gamma_i \dot u_0(s),
        \dot u_{i}(s) + \gamma_i \ddot u_0(s)) 
        \rangle
        \, ds = 0,
    \qquad i \in \mathbb{N},
\end{equation}
for $i$ odd and $\gamma_i=0$, for $i$ even, if we ensure that $u_i$ is even for $i$
even. By using integration by parts together with \cref{eq:z0} and subsequently
solving \cref{eq:u_i_L2_phase_condition} for $\gamma_i$ we obtain
\begin{equation}
    \label{eq:c_i}
    \gamma_i = -\frac{35}{2592} 
            \int_0^\infty \dot u_0(1 - 2 u_0)u_{i}(s) \, ds,
        \qquad i \in \mathbb{N}.
\end{equation}
In~\cite{Kuznetsov2014improved}, the phase condition
\begin{equation}
     \label{eq:u_i_L2_phase_condition_u_only}
     \int_{-\infty}^\infty \left( u(s) - u_0(s) \right) \dot u_0(s) \, ds
     = 0,
    \qquad i \in \mathbb N,
\end{equation}
was also tested. This phase condition only minimizes the $L_2$-distance of the
$u$-component between the current and the zeroth-order solution obtained from
the regular perturbation method. It is reported in~\cite{Kuznetsov2014improved}
that for \cref{eq:u_i_L2_phase_condition_u_only} no substantial superiority
over using phase condition \cref{eq:RP_phase_condtion_unnatural} was found.
Our findings show that, at least for \cref{eq:u_i_L2_phase_condition}, this is
only partially true.  Indeed, the numerical simulations in
\cref{sec:topological_normal_form} show that, as one would expect, using the
phase condition \cref{eq:u_i_L2_phase_condition} does indeed improve the
approximation to the homoclinic orbit. However, when the homoclinic
approximations are lifted from the normal form to the center manifold, the
phase conditions are, in general, not preserved, and the improvements are no
longer observed.

As we will see below, the $L_2$ phase condition
\cref{eq:u_i_L2_phase_condition} is more difficult to solve. It is, therefore,
more efficient to use the orbital normal form \cref{eq:universal_unfolding}
instead of the smooth normal form \cref{eq:BT_smooth_nf}.

\subsubsection{Third-order homoclinic approximation}
\label{sec:third_order_homoclinic_approximation_RP}
For $i=1$ we obtain the equation
\begin{equation*}
    z_1(s) = (u_0(s)+\tau_0) \dot u_0(s).
\end{equation*}
Condition \cref{eq:tau_condition_RP} yields
\begin{equation*}
    \tau_0 = \frac{10}{7}.
\end{equation*}
Then from \cref{eq:u_i_RP} we obtain the solution
\begin{equation*}
    u_1(s) = -\frac{6}{7} \dot u_0(s) \log (\cosh (s)).
\end{equation*}
The $L_2$ phase condition then yields that
\[
\gamma_1 = -\frac{3}{245} (70 \log (2)-59)
.\] 
Note that the integral to be evaluated in \cref{eq:c_i} is labor-intensive and
prone to error.  Therefore, we used the (freely available) Wolfram
Engine~\cite{WolframEngine} (although not open source).  Correcting the
previous solution $u_1$ leads to the solution
\begin{equation*}
    u_1(s) = \frac{3}{245} (59-70 \log (2 \cosh (s))) \dot u_0(s).
\end{equation*}

Continuing with the second-order system we have the equation
\[
    z_2 = (u_0+\tau_0) \dot u_1+u_1 \dot u_0+u_1^2.
\] 
Here we directly used that $\tau_1=0$ by the symmetry as explained above.
From \cref{eq:u_i_RP} we obtain
\begin{align*}
    u_2(s) ={}& 
    \frac{1}{60025} 36 \sech^2 s  \left[3 \sech^2 s  \left\{70 \log (2 \cosh  s )
        (105 \log (2 \cosh s )-247) \right.\right. \\ 
              &{} \left.\left. +6289\right\}-2(3675 s \tanh  s +210 \log (2 \cosh  s )
              (35 \log (2 \cosh  s )-94)+7129)\right].
\end{align*}
Notice that, since $u_2$ is an even function, we automatically have that
$\gamma_2$ vanishes.

For $i=3$ we have
\begin{equation*}
    z_3 = (u_2+\tau _2) \dot u_0+(u_0+\tau _0) \dot u_2+u_1 \left(\dot u_1+2 u_2\right).
\end{equation*}
Condition \cref{eq:tau_condition_RP} yields
\begin{equation*}
    \tau_2 = \frac{288}{2401}.
\end{equation*}

Then from \cref{eq:u_i_RP} we obtain the solution
\begin{align*}
    u_3(s) ={}&
    \frac{216 \sech^2 s}{14706125}  
    \left[\sech^2 s  \left\{3675 s (210 \log (2 \cosh  s )-247) \right. \right. \\
              & +\tanh  s  \left(-171500 (\cosh (2 s)- 5) \log ^3(2 \cosh  s )+7350 (129 \cosh (2 s)-470) \right. \\
              & \left.\left. \log^2(2 \cosh  s )+4456830 \log (2 \cosh s)-966242\right)\right\} \\
              & \left.-70 \{210 s (35 \log (2 \cosh  s )-47)+30673 \tanh  s \log (\cosh  s )\}\right]
.\end{align*}

Trying to solve the integral in \cref{eq:c_i} with the Wolfram Engine yields
\begin{align}
    \label{eq:c_i_integral}
    \int_0^\infty \dot u_0(1 - 2 u_0)u_3(s) \, ds ={}& 
    \frac{16 \left(-5234558923+331676100 \pi^2+6260972760 \log (2)\right)}{514714375} \\
    &{} -\frac{155520}{343}\int_0^\infty 
                \log^3(2\cosh s )\sech^6 s \tanh^2 s  \, ds \nonumber \\ 
    &{}  \frac{31104}{343}
                \int_0^\infty \log^3(2\cosh s )\sech^6 s \tanh^2 s \cosh(2s) 
                \, ds \nonumber \\
    &{}  \frac{622080}{343}
            \int_0^\infty \log^3(2\cosh s )\sech^8 s \tanh^2 s  
            \, ds \nonumber \\ 
    &{} -\frac{124416}{343}
            \int_0^\infty \log^3(2\cosh s )\sech^8 s \tanh^2 s \cosh(2s) 
            \, ds, \nonumber
\end{align}
i.e., the Wolfram Engine was unable to solve the integral. We observe that, in
order to solve \cref{eq:c_i_integral}, it is sufficient to solve integrals of
the form
\begin{equation}
    \label{eq:I_n}
    I_n := \int_0^{\infty} \log^3 (2 \cosh s) \sech^n s \, ds,
\end{equation}
with $n=4,6,8$ and $n=10$. After a lengthy calculation, see \cref{sec:I_n}, we
obtain the closed form expression
\begin{multline*}
I_n = 2^{n-3} 3 \sum_{k=0}^{\frac{n}{2}-1} \binom{\frac{n}{2}-1}k 
            (-1)^k  \\
        \left[\frac1{(\frac{n}{2}+k)^4} +
            \frac{8}{2k+n} \left(\frac{H_{\frac{n}{2} + k}}{(2k+n)^2}
                + \frac{H^{(2)}_{\frac{n}{2}+k} - \zeta(2)}{2(2k+n)}
            + \frac{H^{(3)}_{\frac{n}{2}+k} - \zeta(3)}{4}
        \right)\right].
\end{multline*}
where $\zeta$ is the Riemann zeta function and $H_n^{(m)}$ is the $n$th
generalized harmonic number of order $m$. Explicitly we obtain
\begin{align*}
    I_4 ={}& \frac{82}{27}-\frac{5 \pi ^2}{36}-\zeta (3), \\
    I_6 ={}& \frac{38342}{16875}-\frac{47 \pi ^2}{450}-\frac{4 \zeta (3)}{5}, \\
    I_8 ={}& \frac{25545482}{13505625}-\frac{319 \pi ^2}{3675}-\frac{24 \zeta (3)}{35}, \\
    I_{10} ={}& \frac{5428830032}{3281866875}-\frac{7516 \pi ^2}{99225}-\frac{64 \zeta (3)}{105}.
\end{align*}
From the above integrals, we deduce that
\begin{equation*}
    \gamma_3 = \frac{264 \zeta(3)}{343}-\frac{884895199}{7147176750}
            -\frac{100 \pi^2}{3087}-\frac{1104228 \log 2}{420175}.
\end{equation*}
Since, by the symmetry, $\tau_3=0$, we obtain the third-order predictor 
\begin{equation}
\label{eq:third_order_predictor_RPM_tau}
\begin{cases}
\begin{aligned}
w_0(\eta)  &= \frac{a}{b^2} \left( \sum_{i=0}^3 u_i(\frac{a}{b}\epsilon\eta) \epsilon^i +
\mathcal{O}(\epsilon^4) \right)   \epsilon^2, \\
w_1(\eta)  &= \frac{a^2}{b^3} \left( \sum_{i=0}^3 \dot u_i(\frac{a}{b}\epsilon\eta) \epsilon^i +
\mathcal{O}(\epsilon^4) \right)   \epsilon^3, \\
\beta_1    &= -4 \frac{a^3}{b^4}\epsilon^4, \\
\beta_2    &= \frac{a}{b}\epsilon^2 \left( \frac{10}{7} + \frac{288}{2401} \epsilon^2 + \mathcal{O}(\epsilon^4) \right),
\end{aligned}
\end{cases}
\end{equation}
for the smooth orbital normal form \cref{eq:normal_form_orbital}.

\begin{remark}
In~\cite{Kuznetsov2014improved} there is the remark that the
author~\cite{Beyn_1994} was unable to find a tangent predictor due to the
(normalized) form. The system in~\cite[Equation (4.5)]{Beyn_1994} to be solved
is given by
\begin{equation}
  \label{eq:Beyn_tangent_system}
  \begin{cases}
  \begin{aligned}
  \dot x  - y &= \frac12 a_1 \hat x^2 + a_2 \tau_0 \hat x + a_3 \tau_0^2, \\
  \dot y -2\hat x x &=  b_1 \hat x \hat y + b_2 \tau_0 \hat y,
  \end{aligned}
  \end{cases}
\end{equation}
here $(\hat x, \hat y)$ is the zeroth-order solution $(u_0, \dot u_0)$. The
coefficients $a_1,a_2,b_1,b_2$ are different normal form coefficients then used
in this paper, but $\tau_0$ is identical to the $\tau_0$ used in this paper.
Using the same technique as above it is easy to derive that
\begin{align*}
x(s) ={}& \left(a -\frac{1}{16} a_3 s  \tau _0^2-\frac{1}{96} a_3 \tau _0^2
	\sinh (2 s ) -\frac{1}{48} a_2 \tau _0 \sinh (2 s )+\frac{1}{24} a_3 \tau _0^2
	\coth (s ) \right . \\ 
					&-\frac{1}{24} a_2 \tau _0 \coth (s )+\frac{a_1 s
	}{8}-\frac{1}{48} a_1 \sinh (2 s ) +\frac{1}{12} a_1 \coth (s )+\frac{1}{15}
	b_2 \tau_0 \cosh (2 s ) \\ 
	&+\frac{1}{240} b_2 \tau _0 \cosh (4 s
	)-\frac{2}{21} b_1 \cosh (2 s )-\frac{1}{168} b_1 \cosh (4 s ) \\ 
	&\left.  -\frac{6}{7} b_1 \log (\cosh (s )) \right) \hat y(s)
\end{align*}
is a solution to \cref{eq:Beyn_tangent_system}. Therefore,
by~\cite{Keller1977,Decker1986} the convergence of the homoclinic solution in
the perturbed Hamiltonian system follows. Although the author
in~\cite{Beyn_1994} was unable to provide a tangent approximation in
phase-space, they did prove, by refining the convergence cones
from~\cite{Jepson1986}, the convergence of the zeroth-order approximation in
phase-space in the perturbed Hamiltonian system.
\end{remark}

\subsection{A polynomial Lindstedt-Poincar\'e method}
\label{sec:PolynomailLindstedtPoincare}

The Lindstedt-Poincar\'e method considers a nonlinear time transformation
defined implicitly through the relation 
\begin{equation}
  \label{eq:first_non_linear_time_transformation}
  \frac{d\xi}{ds}=\omega(\xi),
\end{equation}
which can be used to remove the so-called \emph{secular terms}, i.e., terms
growing without bound, appearing in the process of approximating periodic
orbits in weakly nonlinear oscillators using the regular perturbation approach.

The Lindstedt-Poincar\'e method is also used to approximate homoclinic
solutions in nonlinear oscillators, referred to as the generalized
Lindstedt-Poincar\'e method,
see~\cite{Chen2009,Chen@2009A,Chen@2009B,Chen@2010,Chen@2012}. In this case,
there are no terms growing without bound when applying the regular perturbation
approach. Instead, there are so-called \emph{parasitic turns}, see~\cite[Figure
1]{Kuznetsov2014improved}.  The nonlinear transformation
\cref{eq:first_non_linear_time_transformation} can then be used to remove the
parasitic turns. In fact, using the nonlinear transformation, one can obtain a
very simple form for the solution of the homoclinic orbit in phase-space,
see~\cite[Equation 35]{Chen2009} and~\cite{Algaba_2019}.

In both cases, i.e., when approximating periodic orbits or homoclinic orbits,
we do the same: a nonlinear time transformation is used to obtain a uniform
approximation of the orbit in time. 

\subsubsection{General method}
\label{sec:PolynomailLindstedtPoincareMethod}
Substituting the parameterization of time $\omega$
\cref{eq:first_non_linear_time_transformation} into
\cref{eq:second_order_nonlinear_oscillator} yields
\begin{equation}
\label{eq:secondorder_gBT_omega}
\omega\dfrac{d}{d\xi}\left(\omega \hat u^\prime\right)-\hat u^2+4
  =\epsilon\omega \hat u^\prime (\hat u+\tau) + \mathcal O(\epsilon^4).
\end{equation}
were $\hat u(\xi(s)) = u(s)$.

We now perform one additional transformation of time
\begin{equation}
  \label{eq:second_time_transformation}
  \frac{d\zeta}{d\xi} = 1 - \zeta^2
\end{equation}
to simplify the solutions obtained below. Note that this transformation implies
that $\zeta = \tanh(\xi + c_1)$, where $c_1$ is some constant. Without loss of
generality, we can assume that $c_1=0$ since $c_1$ just shifts the homoclinic
solution in time. Substituting \cref{eq:second_time_transformation} into
\cref{eq:secondorder_gBT_omega} yields
\begin{equation}
\label{eq:secondorder_gBT_zeta}
(1 - \zeta^2)\tilde\omega\dfrac{d}{d\zeta}\left((1-\zeta^2)\tilde\omega \tilde
u^\prime\right)-\tilde u^2+4
  =\epsilon\tilde\omega (1-\zeta^2)\tilde u^\prime(\tilde u+\tau) +\mathcal{O}(\epsilon^4),
\end{equation}
where the prime ${}^\prime$ now represents the derivative with respect to the
variable $\zeta$, $\tilde u(\zeta(\xi(s))) = u(s)$, and $\tilde \omega
(\zeta) = \omega(\xi(\zeta))$.

Expanding $\tilde u$ and $\tau$ in $\epsilon$ 
\begin{equation}
  \label{eq:u_tilde_expansion_tau_expansion}
  \tilde u(\zeta) = \sum_{i=0} \tilde u_i(\zeta) \epsilon^i, \qquad \tau = \sum_{i=0} \tau_i \epsilon^i,
\end{equation}
substituting into \cref{eq:secondorder_gBT_omega}, and collecting terms of equal
power in $\epsilon$, we obtain the following systems to be solved:
\begin{align}
  &(1-\zeta^2)\tilde\omega_0
    \left((1-\zeta^2)\tilde\omega_0 \tilde u_0^\prime\right)^\prime-\tilde u_0^2 +4 
        {}= 0, \label{eq:0th_order_equation} \\
  &(1-\zeta^2)\left((1-\zeta^2) \tilde u_i^\prime\right)^\prime-2 \tilde u_0
  \tilde u_i   
    + 2 (1 - \zeta^2)\tilde\omega_i \left( (1 - \zeta^2) \tilde u_0^\prime \right)^\prime 
    + (1 - \zeta^2)^2 \tilde u_0^\prime \tilde\omega_i^{\prime}
    \label{eq:ith_order_equation} \\ 
  &\qquad = \tau_{i-1} \left(1- \zeta^2\right) \tilde u_0^\prime 
         + z_i, \qquad i \in \mathbb{N}. \nonumber
\end{align}
Here $z_i$ contains the sums and products of terms in $\tilde u_j,\tilde
\omega_j$ and $\tau_{j-1}$ with $0 \leq j \leq i-1$, with $\tau_{-1}$ is
defined to be zero.

\begin{theorem}
  \label{thm:i_order_solution}
  Equations \cref{eq:0th_order_equation,eq:ith_order_equation} are
  solvable for every $i\in\mathbb N_{0}$, with
  \begin{equation}
    \label{eq:u_i_form}
    \tilde u_i(\zeta) = \sigma_i \zeta^2 + \delta_i,
  \end{equation}
	where $\sigma_i$ and $\delta_i$ are constants to be determined.
\end{theorem}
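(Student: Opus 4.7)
The plan is to prove the theorem by strong induction on $i$, simultaneously determining $\tilde u_i$, $\tilde\omega_i$, and the constant $\tau_{i-1}$ at each order.

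\textbf{Base case.} For $i = 0$, I would set $\tilde\omega_0 \equiv 1$, so that the two successive time transformations \cref{eq:first_non_linear_time_transformation,eq:second_time_transformation} give $\zeta = \tanh(s)$. The known explicit homoclinic solution $u_0(s) = 6\tanh^2(s) - 4$ of the unperturbed Hamiltonian then becomes $\tilde u_0(\zeta) = 6\zeta^2 - 4$, yielding $(\sigma_0, \delta_0) = (6, -4)$. A direct substitution verifies \cref{eq:0th_order_equation}.

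\textbf{Inductive step.} Assume the claim for all $0 \le j < i$, so each $\tilde u_j(\zeta) = \sigma_j \zeta^2 + \delta_j$ and each $\tilde\omega_j$ is a known polynomial in $\zeta$. Using $\tilde u_0 = 6\zeta^2 - 4$, a direct computation shows that substituting the ansatz $\tilde u_i = \sigma_i \zeta^2 + \delta_i$ into the operator $\tilde u_i \mapsto (1-\zeta^2)((1-\zeta^2)\tilde u_i')' - 2\tilde u_0 \tilde u_i$ yields the explicit polynomial $(2\sigma_i + 8\delta_i) - 12\delta_i \zeta^2 - 6\sigma_i \zeta^4$. Equation \cref{eq:ith_order_equation} then reduces to a first-order linear ODE for $\tilde\omega_i$ with polynomial coefficients and a polynomial source built from $z_i$ together with the three free scalars $\sigma_i, \delta_i, \tau_{i-1}$. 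I would then show that these scalars can always be chosen so that the ODE admits a polynomial solution $\tilde\omega_i$ regular on $[-1,1]$, which establishes both the form of $\tilde u_i$ and the solvability at order $i$.

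\textbf{Main obstacle.} The crux is verifying that the polynomial structure of $z_i$ is preserved through the induction (with controlled degree and parity), and that the three scalar solvability conditions for the ODE form a non-degenerate system in $(\sigma_i, \delta_i, \tau_{i-1})$. Evaluating the reduced equation at the endpoints $\zeta = \pm 1$, where the coefficient of $\tilde\omega_i'$ vanishes to second order, yields two algebraic conditions pinning down $(\sigma_i, \delta_i)$, while an integrability/regularity requirement on $\tilde\omega_i$ at the remaining singular point determines $\tau_{i-1}$ uniquely, playing the role of the Fredholm solvability condition \cref{eq:tau_condition_RP} in the regular perturbation method. Once these three scalars are fixed, $\tilde\omega_i$ is recovered by polynomial division, closing the induction.
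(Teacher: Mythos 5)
Your overall skeleton matches the paper's proof: strong induction, the ansatz $\tilde u_i(\zeta)=\sigma_i\zeta^2+\delta_i$, viewing \cref{eq:ith_order_equation} as a first-order linear ODE for $\tilde\omega_i$ with polynomial coefficients singular at $\zeta=0,\pm1$, and your computation $(1-\zeta^2)\bigl((1-\zeta^2)\tilde u_i'\bigr)'-2\tilde u_0\tilde u_i=(2\sigma_i+8\delta_i)-12\delta_i\zeta^2-6\sigma_i\zeta^4$ is correct. The gap is in how you allocate the three solvability conditions. Plugging $\zeta=\pm1$ into \cref{eq:ith_order_equation} kills every term carrying a factor $1-\zeta^2$ --- including, crucially, the $\tau_{i-1}(1-\zeta^2)\tilde u_0'$ term --- leaving only $-2\tilde u_0(\pm1)\tilde u_i(\pm1)=z_i(\pm1)$, i.e.\ $4(\sigma_i+\delta_i)+z_i(\pm1)=0$. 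Both endpoints therefore yield the \emph{same} single scalar constraint on $\sigma_i+\delta_i$ (together with the compatibility $z_i(1)=z_i(-1)$), not two independent equations pinning down $(\sigma_i,\delta_i)$. Moreover, since the factor $(1-\zeta^2)\tilde u_0'=12\zeta(1-\zeta^2)$ vanishes at \emph{all three} singular points $\zeta=0,\pm1$, the unknown $\tau_{i-1}$ drops out of every pointwise evaluation of the ODE there; no local regularity condition at $\zeta=0$ can see it.

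What the paper actually does is multiply through by the integrating factor $(1-\zeta^2)^2(\tilde u_0')^2$ and integrate once, producing $\tilde\omega_i$ as an explicit rational function whose denominator $\bigl((1-\zeta^2)\tilde u_0'\bigr)^2$ has double zeros at $\zeta=0,\pm1$, and whose numerator contains an antiderivative $g_i(\zeta)=\tau_{i-1}\int(1-\zeta^2)(\tilde u_0')^2\,d\zeta+\int\tilde u_0'z_i\,d\zeta$ that genuinely carries $\tau_{i-1}$. Demanding the numerator vanish at $\zeta=-1$ (after fixing the integration constant so it vanishes at $\zeta=1$) is the Fredholm-type \emph{integral} condition that determines $\tau_{i-1}$; demanding it vanish at $\zeta=0$ gives $\delta_i=g_i(1)/12$; and only the double-root requirement at $\zeta=\pm1$ --- equivalent to the pointwise substitution you propose --- gives $\sigma_i=-\delta_i-z_i(1)/4$. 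So the third condition you need is not a second endpoint evaluation but the vanishing of the integrated numerator at $\zeta=0$, and the $\tau_{i-1}$ condition is an integral over $[-1,1]$, not a local regularity requirement at a single singular point.
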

\begin{proof}
It is easy to see that equation \cref{eq:0th_order_equation} is solvable with
$\sigma_0=6$, $\delta_0=-4$, and $\tilde\omega_0(\zeta) = 1$.

Assume that for $i=1,\dots,n-1$, the systems given by
\cref{eq:ith_order_equation} are solvable for $\tilde\omega_i$ and $\tilde u_i$.
Furthermore, also assume that for $i=1,\dots,n$, $\tilde u_i$ is of the form
\cref{eq:u_i_form}.  We will show that the system \cref{eq:ith_order_equation}
with $i=n$ is solvable for $\tilde \omega_n$.

First notice that \cref{eq:ith_order_equation} is just a first order ordinary
differential equation in $\tilde \omega_i$:
\begin{equation}
\label{eq:omegap_i_as_ode}
  \tilde\omega_i^{\prime} + \frac{2 \left( (1 - \zeta^2) \tilde u_0^\prime \right)^\prime}
				{(1 - \zeta^2) \tilde u_0^\prime} \tilde\omega_i
  = \frac{2 \tilde u_0\tilde u_i - (1-\zeta^2)\left((1-\zeta^2)\tilde u_i^\prime\right)^\prime
    + \tau_{i-1} \left(1- \zeta^2\right) \tilde u_0^\prime + z_i}
				{(1 - \zeta^2)^2 \tilde u_0^\prime}.
\end{equation}
Multiplying by the integrating factor 
\begin{equation}
    \label{eq:integrating_factor}
    (1-\zeta^2)^2\left( \tilde u_0^\prime \right)^2
\end{equation}
and subsequently integrating with respect to $\zeta$ yields the
identity
\begin{align}
  \tilde\omega_i  &=  \frac{ (1-\zeta^2)\left((1-\zeta^2)\tilde u_0^\prime\right)^\prime \tilde u_i 
      - (1-\zeta^2)^2\tilde u_0^\prime \tilde u_i^\prime + (g_i(\zeta)-g_i(1)) }
          { \left( \left(1 -\zeta^2\right) \tilde u_0^\prime \right)^2} \nonumber \\
        &=  -\frac{\sigma_i}{12} 
            - \frac{ (1-\zeta^2)\left((1-\zeta^2)\tilde u_0^\prime\right)^\prime \tilde u_i 
            + (g_i(\zeta)-g_i(1)) }
          { \left( \left(1 -\zeta^2\right) \tilde u_0^\prime \right)^2}
          \label{eq:omega_i_explicit}
\end{align}
where
\begin{align*}
  g_i(\zeta) &= \tau_{i-1} \int \left(1- \zeta^2\right) \left(
               \tilde u_0^\prime \right)^2 \; d\zeta + \int \tilde u_0^\prime z_i \; d\zeta.
\end{align*}
Here we used identity 
\[
\left((1-\zeta^2)\left((1-\zeta^2)\tilde u_0^\prime\right)^\prime \right)' 
=2 \tilde u_0 \tilde u_0^\prime,
\]
obtained from differentiating equation \cref{eq:0th_order_equation} and then
using integrating by parts.  Furthermore, we have chosen the integration
constant $g_i(1)$ such that numerator in \cref{eq:omega_i_explicit} vanishes
for $\zeta=1$. Indeed, for $\tilde\omega_i$ to be well-defined, the numerator in
\cref{eq:omega_i_explicit} must have roots of at least multiplicity two at
$\zeta=0$ and $\zeta=\pm 1$. By setting $\zeta=-1,0$ in the numerator of
\cref{eq:omega_i_explicit}, we obtain the equations
\begin{align}
  0 & = g_i(-1) - g_i(1), \\
  0 & = 12 \delta_i - (g_i(0) - g_i(1)),
\end{align}
respectively. The first equation can be solved explicitly for $\tau_i$. Since
$g_i(0)=0$, it follows that $\delta_i = \frac{g_i(1)}{12}$. To show that the
roots $\zeta=0$ and $\zeta=\pm1$ have multiplicity two, we notice that
differentiation of the numerator in \cref{eq:omega_i_explicit} with respect to
$\zeta$ is equal to multiplying the right-hand side of \cref{eq:omegap_i_as_ode}
with the integrating factor
\cref{eq:integrating_factor}, i.e.,
\begin{equation}
    \label{eq:vanishing_condition_for_double_root}
    \tilde u_0' \left( 2 \tilde u_0\tilde u_i - (1-\zeta^2)\left((1-\zeta^2)\tilde u_i^\prime\right)^\prime
    + \tau_{i-1} \left(1- \zeta^2\right) \tilde u_0^\prime - z_i \right).
\end{equation}
Since $\tilde u_0^\prime = 12 \zeta$, we can factor out $\zeta=0$. Then substituting
$\zeta=\pm1$ into \cref{eq:vanishing_condition_for_double_root}, the following
equation needs to be satisfied
\begin{equation*}
    2 \tilde u_0(\pm1)\tilde u_i(\pm1) + z_i(\pm1) = 0.
\end{equation*}
Notice that this condition is equivalent to the condition obtained by
substituting $\zeta=\pm1$ into \cref{eq:ith_order_equation}. Therefore, by
solving the above equation for either $\pm 1$, yields
\begin{equation}
    \label{eq:sigma_i}
    \sigma_i = -\delta_i - \frac{z_i(1)}{4}.
\end{equation}

Lastly, notice that for $i=1$ we have the solution
\[
\tau_0 = \frac{10}{7}, \quad \sigma_1=0, \quad \delta_1=0,
				\quad \tilde\omega_1(\zeta) = \frac{6}{7} \zeta.
\]
\end{proof}

\begin{corollary}
    \label{corllary:rational_coefficients}
    For $i\in \mathbb N_0$ the polynomials $\tilde \omega_i$ 
    \cref{eq:omega_i_explicit} have rational
    coefficients. Also, the $\tau_i, \sigma_i$ and  $\delta_i$ are rational.
\end{corollary}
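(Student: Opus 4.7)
The plan is to prove the corollary by strong induction on $i \in \mathbb{N}_0$, exploiting the explicit formulas already established in the proof of \cref{thm:i_order_solution} and tracking rationality through each operation.

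For the base case, I would simply note the explicit values: $\tilde{\omega}_0(\zeta)=1$, $\tilde{u}_0(\zeta)=6\zeta^2-4$, so $\sigma_0=6$ and $\delta_0=-4$ are rational, and the closing remark of the theorem gives $\tau_0=10/7$, $\sigma_1=\delta_1=0$, and $\tilde{\omega}_1(\zeta)=\tfrac{6}{7}\zeta$ as rational data for $i=1$.

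For the inductive step, assume the statement holds for all $j<i$. First I would observe that $z_i$ is built entirely from sums and products of $\tilde{u}_j,\tilde{\omega}_j,\tau_{j-1}$ with $j\le i-1$, so by the induction hypothesis $z_i(\zeta)$ is a polynomial with rational coefficients. Since $\tilde{u}_0^{\prime}=12\zeta$, the integrand $(1-\zeta^2)(\tilde{u}_0^{\prime})^2$ is a polynomial with rational coefficients, as is $\tilde{u}_0^{\prime} z_i$; hence the antiderivatives defining $g_i(\zeta)$ (with any rational choice of integration constant, in particular the choice $g_i(1)$ made in the theorem) are polynomials in $\zeta$ with coefficients lying in $\mathbb{Q}[\tau_{i-1}]$. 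The solvability equation $g_i(-1)-g_i(1)=0$ is then a linear equation in $\tau_{i-1}$ with rational coefficients, whose leading coefficient $\int_{-1}^{1}(1-\zeta^2)(\tilde{u}_0^{\prime})^2\,d\zeta$ is a nonzero rational; thus $\tau_{i-1}\in\mathbb{Q}$. Substituting back, $g_i$ becomes a polynomial with rational coefficients, so $\delta_i=g_i(1)/12\in\mathbb{Q}$ and, using \cref{eq:sigma_i}, $\sigma_i=-\delta_i-z_i(1)/4\in\mathbb{Q}$, giving $\tilde{u}_i(\zeta)=\sigma_i\zeta^2+\delta_i$ with rational coefficients.

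It remains to show that $\tilde{\omega}_i$ is a polynomial (not merely a rational function) with rational coefficients. Writing
\[
\tilde{\omega}_i(\zeta) = -\frac{\sigma_i}{12} - \frac{(1-\zeta^2)\left((1-\zeta^2)\tilde{u}_0^{\prime}\right)^{\prime}\tilde{u}_i + (g_i(\zeta)-g_i(1))}{\left((1-\zeta^2)\tilde{u}_0^{\prime}\right)^2},
\]
the denominator equals $144\,\zeta^2(1-\zeta^2)^2$, a polynomial with rational coefficients. By the argument in the proof of the theorem, the numerator — a polynomial with rational coefficients, as just established — vanishes to order at least two at each of $\zeta=0$ and $\zeta=\pm 1$. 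Therefore polynomial division by $\zeta^2(1-\zeta^2)^2$ in $\mathbb{Q}[\zeta]$ is exact, yielding a quotient in $\mathbb{Q}[\zeta]$, and consequently $\tilde{\omega}_i\in\mathbb{Q}[\zeta]$. This closes the induction.

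The main obstacle is verifying cleanly that all three double zeros of the numerator of $\tilde{\omega}_i$ are present with rational data at the inductive stage, so that the polynomial division stays inside $\mathbb{Q}[\zeta]$; this is precisely what the choice of integration constant $g_i(1)$ and the equations defining $\tau_{i-1},\delta_i,\sigma_i$ guarantee, but one must be careful that each cancellation step preserves rationality rather than merely holomorphy at $\zeta\in\{0,\pm1\}$.
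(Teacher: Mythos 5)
Your proof is correct and is exactly the induction that the paper intends; the paper's stated proof is only the one-line remark that the result ``follows from a simple induction argument taking into account the structure of $z_i$,'' and your write-up is a faithful, fully detailed expansion of that argument, tracking rationality through $z_i$, $g_i$, $\tau_{i-1}$, $\delta_i$, $\sigma_i$, and the exact polynomial division over $\mathbb{Q}[\zeta]$ that produces $\tilde\omega_i$.
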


\begin{proof}
    The proof follows from a simple induction argument taking into account to
    structure of $z_i$, $i\in\mathbb N$, in \cref{eq:ith_order_equation}.
\end{proof}

\begin{corollary}
\label{corollary:delta_i_sigma_i}
The following relation holds
\[
\sigma_i = \delta_i = \tau_i = 0, \qquad \text{for $i$ odd}.
\]
\end{corollary}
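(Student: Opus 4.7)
The plan is to prove the corollary by strong induction on $i$, maintaining three invariants simultaneously for every $j\le i$: (a)~$\tilde u_j$ is an even polynomial in $\zeta$ (which is automatic from \cref{thm:i_order_solution}) and, moreover, $\tilde u_j\equiv 0$ (so $\sigma_j=\delta_j=0$) whenever $j$ is odd; (b)~$\tilde\omega_j$ has parity $(-1)^j$ in $\zeta$, i.e.\ is an even polynomial for $j$ even and an odd polynomial for $j$ odd; (c)~$\tau_j=0$ for $j$ odd. The base cases $j=0,1$ were recorded at the end of the proof of \cref{thm:i_order_solution}.

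The crux of the inductive step will be to show that, under the inductive hypothesis, the forcing term $z_i$ in \cref{eq:ith_order_equation} has parity $(-1)^i$ in $\zeta$ and, when $i$ is odd, additionally satisfies $z_i(\pm 1)=0$. By construction $z_i$ is a sum of products of $\tilde u_{j_1}$, $\tilde u_{j_2}'$, $\tilde\omega_{j_3}$, $\tilde\omega_{j_3}'$ and $\tau_j$ with indices bounded by $i$ and summing to $i$ (from the left-hand side of \cref{eq:secondorder_gBT_zeta}) or to $i-1$ (from the right-hand side). The inductive hypothesis annihilates every term containing an odd-indexed $\tilde u$ or $\tau$, and counting parities using (b) shows that each surviving term has parity $(-1)^i$. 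Moreover, every surviving contribution from $(1-\zeta^2)\tilde\omega((1-\zeta^2)\tilde\omega\tilde u')'$ or from $\epsilon\tilde\omega(1-\zeta^2)\tilde u'(\tilde u+\tau)$ carries an explicit factor of $(1-\zeta^2)$, while the remaining $\tilde u_{j_1}\tilde u_{j_2}$ contributions from $-\tilde u^2$ vanish identically when $i$ is odd (since then one of $j_1,j_2$ must be odd), giving $z_i(\pm 1)=0$ in that case.

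With the parity of $z_i$ in hand the induction closes by splitting into two cases. For $i$ even, $\tilde u_0' z_i$ is odd, so the function $g_i$ defined in the proof of \cref{thm:i_order_solution} is of the form $\tau_{i-1}\cdot(\text{odd})+(\text{even})$, and the solvability condition $g_i(-1)=g_i(1)$ collapses to $\tau_{i-1}\int_0^1(1-\xi^2)(\tilde u_0'(\xi))^2\,d\xi=0$, forcing $\tau_{i-1}=0$. For $i$ odd, $g_i$ is a purely odd function of $\zeta$, so $g_i(-1)=-g_i(1)$ and the same condition forces $g_i(1)=0$; hence $\delta_i=g_i(1)/12=0$, and substituting $z_i(1)=0$ into~\cref{eq:sigma_i} gives $\sigma_i=-\delta_i-z_i(1)/4=0$. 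In either case the parity~(b) of $\tilde\omega_i$ is then read off from~\cref{eq:omega_i_explicit}. The hard part will be the parity bookkeeping for $z_i$: one must carefully enumerate the nonzero contributions after the inductive hypothesis has been used to discard every term containing an odd-indexed $\tilde u$ or $\tau$, and verify that what remains carries the announced parity together with the $(1-\zeta^2)$ factor needed for $z_i(\pm 1)=0$ when $i$ is odd.
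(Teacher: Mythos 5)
Your proposal takes a genuinely different route from the paper's proof. The paper's argument is short and top-down: it invokes \cref{proposition:symmetry} (the branch symmetry $u(-s,\epsilon)=u(s,-\epsilon)+\gamma\,\dot u_0(s)$, $\tau(\epsilon)=\tau(-\epsilon)$, inherited from the flow symmetry under the blow-up à la Beyn), and then reads off evenness of $\sigma(\epsilon)$, $\delta(\epsilon)$, $\tau(\epsilon)$ from the structural form $\tilde u=\sigma\zeta^2+\delta$ by matching the two sides of the symmetry relation at $\zeta=0$ and $\zeta\to\pm1$. Your plan instead proves the same evenness bottom-up, by strong induction on the order, tracking the $\zeta$-parity of $\tilde\omega_i$ and of the forcing $z_i$, and exploiting the explicit formulas $\tau_{i-1}$ via $g_i(-1)=g_i(1)$, $\delta_i=g_i(1)/12$, $\sigma_i=-\delta_i-z_i(1)/4$. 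Where the two cases split (odd vs.\ even $i$) your reasoning is correct, and the parity bookkeeping does close for the truncated quadratic normal form \cref{eq:universal_unfolding}: every surviving product after the inductive hypothesis is discarded has one factor of an odd-index derivative $\tilde u_j'$ or of $\tilde\omega_k$ with $k$ odd in just the right combination, and for $i$ odd the $\sum_k \tilde u_k\tilde u_{i-k}$ piece dies while the rest carries the $(1-\zeta^2)$ prefactor. A few points worth making precise if you carry this out: (i) the induction must be staggered, since solving the $i$th equation produces $\tau_{i-1}$, so invariant~(c) at index $i-1$ is only established at step $i$; (ii) the odd parity of $g_i$ for $i$ odd holds \emph{before} $\tau_{i-1}$ is fixed, so the solvability condition is itself equivalent to $g_i(1)=0$, which simultaneously determines $\tau_{i-1}$ (which need \emph{not} vanish — $i-1$ is even there) and gives $\delta_i=0$; and (iii) your argument, as written, uses the explicit form of $z_i$ given later for the \emph{quadratic} normal form in \cref{corollary:quadraticBTsigma_delta_relation}, whereas the corollary is stated for the full orbital normal form \cref{eq:normal_form_orbital}, whose higher-order $\mathcal O(\epsilon^4)$ remainder contributes to $z_i$ for $i$ large; the paper's symmetry argument covers these automatically, while your route would require checking their parity too. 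Your approach buys a self-contained, purely algebraic proof that does not rely on the global uniqueness-of-branch argument, and it additionally establishes the useful side fact that $\tilde\omega_i$ has parity $(-1)^i$; the paper's approach buys brevity and robustness to the higher-order terms.
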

\begin{proof}
From \cref{proposition:symmetry} we have that the branch of non-trivial
homoclinic orbits has the following symmetry
\[
u(-s, \epsilon)=u(s, -\epsilon)
    + \gamma \dot u_0(s),
\quad \tau(\epsilon) = \tau(-\epsilon),
\]
for $s \in \mathbb{R}$ and some open neighborhood of $\epsilon=0$.  Since
$u(s, \epsilon) = \tilde u(\zeta(\xi(s))), \epsilon)$ and
\begin{equation*}
    \tilde u(\zeta(\xi(s)), \epsilon) = \sigma(\epsilon) \zeta^2(\xi(s)) +
    \delta(\epsilon),
\end{equation*}
where $\sigma(\epsilon) = \sum_i \sigma_i \epsilon^i$ and $\delta(\epsilon) =
\sum_i \delta_i \epsilon^i$, it follows that
\begin{multline*}
    \sigma(\epsilon) \zeta^2(\xi(-s)) + \delta(\epsilon) = u(-s, \epsilon)
    = u(s, -\epsilon) + \gamma \dot u_0(s) \\
    = \sigma(-\epsilon) \zeta^2(\xi(s)) + \delta(-\epsilon)
    + \gamma(\epsilon) \left[1-\zeta^2(\xi(s))\right] 12 \zeta(\xi(s)).
\end{multline*}
Therefore, $\sigma, \delta$, and $\tau$ are even functions in $\epsilon$, from
which the assertion follows.
\end{proof}

\begin{corollary}
    \label{corollary:quadraticBTsigma_delta_relation}
    For the quadratic Bogdanov-Takens normal form \cref{eq:universal_unfolding}
    we have the relation that 
    \begin{equation}
        \label{eq:relation_sigma_delta}
        \sigma_i = -\delta_i, \qquad \text{for } i\geq 1.
    \end{equation}
\end{corollary}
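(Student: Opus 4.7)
The plan is to use the fact that for the pure quadratic normal form \cref{eq:universal_unfolding}, the singular rescaling \cref{eq:blowup} produces an exact equation with no $\mathcal{O}(\epsilon^4)$ remainder, so that \cref{eq:secondorder_gBT_zeta} holds as a strict identity. I would then evaluate this exact equation at the boundary $\zeta=1$ and exploit the resulting algebraic constraint on $\tilde u(1,\epsilon)$.

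First I would observe that every term in \cref{eq:secondorder_gBT_zeta} except $-\tilde u^2 + 4$ carries an overall factor of $(1-\zeta^2)$. By \cref{thm:i_order_solution}, each $\tilde u_i$ is the polynomial $\sigma_i \zeta^2 + \delta_i$, so $\tilde u$ and $\tilde u'$ are bounded at $\zeta = 1$. The construction in the proof of that theorem fixed the integration constant $g_i(1)$ precisely so that the numerator in \cref{eq:omega_i_explicit} has a root of multiplicity at least two at $\zeta = 1$, which cancels the double pole in the denominator and makes $\tilde\omega_i$ bounded there as well. Consequently every term carrying a prefactor of $(1-\zeta^2)$ vanishes in the limit $\zeta\to 1$, leaving the formal power series identity
\begin{equation*}
\tilde u(1,\epsilon)^2 = 4.
\end{equation*}

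Next, using \cref{eq:u_i_form}, I would write $\tilde u(1,\epsilon) = \sum_{i\geq 0}(\sigma_i + \delta_i)\epsilon^i$. The zeroth coefficient is $\sigma_0 + \delta_0 = 6 - 4 = 2$, so $\tilde u(1,\epsilon) + 2$ has zeroth coefficient $4$ and is therefore invertible in the ring of formal power series over $\mathbb{R}$. Factoring $\tilde u(1,\epsilon)^2 - 4 = (\tilde u(1,\epsilon) - 2)(\tilde u(1,\epsilon) + 2) = 0$ then forces $\tilde u(1,\epsilon) - 2 = 0$ identically as a formal series, which is precisely $\sigma_i + \delta_i = 0$ for every $i \geq 1$, i.e., \cref{eq:relation_sigma_delta}.

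The main delicate point I expect is justifying that the nonlinear equation may be evaluated at $\zeta = 1$ at all; this reduces to the boundedness of $\tilde u'$ and $\tilde\omega$ at the endpoint, both of which are delivered by \cref{thm:i_order_solution} and its proof. It is also the step where the quadratic hypothesis is essential: for the smooth normal form \cref{eq:BT_smooth_nf} the rescaled equation \cref{eq:second_order_nonlinear_oscillator_smooth_normalform} contains the contribution $u^2(\tau b a_1 + d u)\epsilon^2/a^2$, which has no prefactor of $(1-\zeta^2)$ and therefore survives at $\zeta = 1$, breaking the clean identity $\tilde u(1,\epsilon)^2 = 4$ and invalidating the corollary outside the quadratic case.
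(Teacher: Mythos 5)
Your proof is correct, and it takes a genuinely different route from the paper's. The paper proceeds termwise: it writes down the explicit formula for $z_i$ obtained from \cref{eq:universal_unfolding}, observes that every contribution carries a prefactor $(1-\zeta^2)$ except the convolution $\sum_{k=1}^{i-1}\tilde u_k\tilde u_{i-k}$, and then inducts on $i$ --- under the hypothesis $\sigma_k+\delta_k=0$, hence $\tilde u_k(1)=0$, for all $k<i$, that convolution vanishes at $\zeta=1$, so $z_i(1)=0$, and \cref{eq:sigma_i} delivers $\sigma_i=-\delta_i$. You reach the same conclusion at the level of the full formal series: since the blow-up \cref{eq:blowup} applied to \cref{eq:universal_unfolding} leaves no $\mathcal O(\epsilon^4)$ remainder, \cref{eq:secondorder_gBT_zeta} holds exactly; every term except $4-\tilde u^2$ carries $(1-\zeta^2)$ and multiplies quantities that are polynomials in $\zeta$ order by order (for $\tilde\omega_i$ this is exactly what the pole cancellation in the proof of \cref{thm:i_order_solution} establishes), so evaluating at $\zeta=1$ gives $\tilde u(1,\epsilon)^2=4$ in $\mathbb{R}[[\epsilon]]$, and since $\tilde u(1,\epsilon)+2$ is a unit the integral-domain factorization forces $\tilde u(1,\epsilon)=2$. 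This cleanly avoids writing out the rather lengthy explicit $z_i$ and compresses the induction into a single algebraic step. Both approaches isolate the same mechanism and you correctly pinpoint where the quadratic hypothesis matters: in \cref{eq:second_order_nonlinear_oscillator_smooth_normalform} the contribution $a^{-2}u^2(\tau b a_1+du)\epsilon^2$ (and its $\epsilon^3$ companion) lacks the $(1-\zeta^2)$ prefactor and so survives at $\zeta=1$, destroying the identity $\tilde u(1,\epsilon)^2=4$. One small imprecision worth fixing: the integration constant $g_i(1)$ in the proof of \cref{thm:i_order_solution} is chosen only to give a simple root of the numerator at $\zeta=1$; the multiplicity-two claim is established separately by examining the derivative of the numerator, so you should not attribute the double root directly to the choice of constant.
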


\begin{proof}
    Applying the singular rescaling \cref{eq:blowup} to the normal form
    \cref{eq:universal_unfolding}, and consecutive applying the nonlinear time
    transformations \cref{eq:first_non_linear_time_transformation} and
    \cref{eq:second_time_transformation}, we obtain
    \cref{eq:secondorder_gBT_zeta} without the higher-order terms in $\epsilon$.
    After some calculations we obtain the explicit expression for $z_i$ with
    $i\geq 1$ in \cref{eq:ith_order_equation}, namely
    \begin{align*}
    z_i(\zeta) = 
         \sum_{k=1}^{i-1} u_k u_{i-k} 
         + (1-\zeta^2) \left\{ \sum_{l=1}^{i-1} 
          u_l^\prime \tau_{i-1-l}
         + \sum_{k=1}^{i-1} \sum_{l=0}^{i-1-k} 
           \omega_k u_l^\prime \tau_{i-1-l-k} + \right. \\
           \left. 
           \sum_{k=0}^{i-1} \sum_{l=0}^{i-1-k}
           \omega_k u_l^\prime u_{i-1-l-k} 
         - \sum_{l=1}^{i-1}
           \omega_l \left((1-\zeta^2)
           u_{i-l}^\prime \right)^\prime
         - \sum_{k=1}^{i-1} \sum_{l=0}^{i-k}
           \omega_l \left((1-\zeta^2)\omega_k u_{i-l-k}^\prime
           \right)^\prime \right\}. \nonumber \\
    \end{align*}
    From \cref{corollary:delta_i_sigma_i}, we have that $z_1(1)=0$. By
    assuming that the relation \cref{eq:relation_sigma_delta} holds for
    $i=1,2,\dots,n-1$,  $n\in\mathbb N$, we see directly that  $z_i(1)=0$. The
    assertion now follows by \cref{eq:sigma_i}, with $i=n$.
\end{proof}

\begin{remark}
    From \cref{corollary:delta_i_sigma_i} it follows that the solution $\tilde
    u$ for the quadratic Bogdanov-Takens normal form
    \cref{eq:universal_unfolding} can be represented by the single parameter
    $\sigma$
    \begin{equation*}
        \tilde u(\zeta) = 2 - (1-\zeta^2)\sum_{i\geq 0} \sigma_i \epsilon^i.
    \end{equation*}
    Consequently, $\hat u$ becomes
    \begin{equation*}
        \hat u(\xi) = 2 - \sech^2(\xi) \sum_{i\geq 0} \sigma_i \epsilon^i.
    \end{equation*}
\end{remark}

\subsubsection{Third-order orbital homoclinic approximation}
\label{sec:third_order_homoclinic_approximation_LP}
For the third-order homoclinic predictor we obtain
\begin{align}
				\sigma &= 6 + \frac{18}{49}\epsilon^2 + \mathcal{O}(\epsilon^4),
				\nonumber \\
				\delta &=-4 - \frac{18}{49}\epsilon^2 + \mathcal{O}(\epsilon^4),
				\nonumber \\
				\label{eq:tau_orbital}
				\tau   &= \frac{10}{7} + \frac{288}{2401} \epsilon^2 +
								\mathcal{O}(\epsilon^4), \\
				\tilde \omega(\zeta) &= 1 - \frac{6}{7}\zeta \epsilon	+ 
				\left(\frac{9}{98} + \frac{27}{98}\zeta^2 \right) \epsilon^2 +
				\left( -\frac{198}{2401}\zeta + \frac{18}{343}\zeta^3 \right)\epsilon^3+
								\mathcal{O}(\epsilon^4). \nonumber
\end{align}
From which it follows that
\begin{align}
    \label{eq:third_order_uhat}
    \tilde {u}(\zeta) 
    ={}& 2 - \left(1-\zeta^2\right) \left(6 + \frac{18}{49}\epsilon^2 \right) 
        + \mathcal{O}(\epsilon^4), \\
    \label{eq:third_order_vhat}
	\tilde  v(\zeta) 
  ={}& -2 \tilde\omega(\zeta) \sigma (1-\zeta^2)\zeta
	= -\left[ -12 + \frac{72}{7} \zeta \epsilon
			 - \left( \frac{90}{49} + \frac{162 }{49} \zeta^2 \right)\epsilon^2
		 \right. \\
		 & + \left. \left( \frac{3888}{2401} \zeta -
     \frac{216}{343}\zeta^3 \right) \epsilon^3 \right]
     (1-\zeta^2) \zeta + \mathcal{O}(\epsilon^4) \nonumber.
\end{align}

The relation $\xi(s)$ is obtained by solving the ODE
\begin{equation}
				\label{eq:third_order_dxi_ds}
				\frac{d\xi}{ds}(s) = \tilde \omega(\tanh(\xi(s))).
\end{equation}
Thus, we substitute 
\begin{equation*}
				\xi(s) = s + \xi_1(s)\epsilon + \xi_2(s)\epsilon^2
				+ \xi_3(s)\epsilon^3 + \mathcal{O}(\epsilon^4),
\end{equation*}
into \cref{eq:third_order_dxi_ds} and expand the resulting equation in
$\epsilon$ to obtain
\begin{align}
				\frac{d\xi_1}{ds}(s) &= -\frac{6\tanh(s)}{7}, 
            \label{eq:xi_1} \\
				\frac{d\xi_2}{ds}(s) &= \frac{18+54\tanh^2(s)
				-168\xi_1(s)+168\tanh^2(s)\xi_1(s)}{196},
            \label{eq:xi_2} \\
				\frac{d\xi_3}{ds}(s) &= -\frac{198 \tanh(s)}{2401}+
				\frac{18 \tanh^3(s)}{343}-\frac{27}{49}
				(-\tanh(s) \xi_1(s)+\tanh^3(s) \xi_1(s)) \nonumber \\
						& \qquad-\frac{6}{7} (-\tanh(s)
				\xi_1^2(s)+\tanh^3(s) \xi_1^2(s)+\xi_2(s)-\tanh^2(s) \xi_2(s)).
            \nonumber
\end{align}
Here we directly used that $\xi_0(s)=s$. By solving these equations recursively
we obtain
\begin{equation*}
\begin{aligned}
    \xi_1(s) ={}& c_1 - \frac{6}{7}\log(\cosh(s)), \\
    \xi_2(s) ={}& c_2 -\frac{18 s}{49}+\frac{45 \tanh (s)}{98}-\frac{6}{7} c_1
             \tanh (s)+\frac{36}{49} \tanh (s) \log (\cosh(s)), \\
    \xi_3(s) ={}& c_3 + \frac{1}{4802}\left( 3 \sech^2(s) \left(-504 \log^2(\cosh
            (s))-276 \cosh (2 s) \log (\cosh (s)) \right.\right. \\
            {}& + 102 \log (\cosh (s))+14 (18 s-49 c_2) \sinh (2 s)+1176
                c_1 \log (\cosh (s)) \\
            {}& \left. \left. + 546-686 c_1{}^2-441 c_1\right) \right).
\end{aligned}
\end{equation*}
The constants $c_i (i=1,2,3)$ lead to different phase conditions.  A
computationally simple phase condition is given by
\begin{equation}
    \label{eq:xi_i}     
    \xi_i(0)=0,\qquad \mbox{for } i=1,2,3.
\end{equation}
These results in the constraint $v(0)=0$, i.e., the phase condition used
in~\cite{Kuznetsov2014improved}. Solving \cref{eq:xi_i} leads to the solution
\begin{equation}
    \label{eq:c_i_first_phase_condition}
     c_1=0, \qquad c_2=0, \qquad c_3=-\frac{117}{343}.
\end{equation}
Substituting the above expression for $\xi$ into \cref{eq:blowup} we obtain the
third-order predictor
\begin{equation}
\label{eq:third_order_predictor_LP_tau}
\begin{cases}
\begin{aligned}
w_0(\eta)  &= \frac{a}{b^2} 
\tilde {u}\left(\tanh\left(\xi\left(\frac{a}{b}\epsilon\eta\right)\right)\right) \epsilon^2, \\
w_1(\eta)  &= \frac{a^2}{b^3}
\tilde {v}\left(\tanh\left(\xi\left(\frac{a}{b}\epsilon\eta\right)\right)\right) \epsilon^3, \\
\beta_1    &= -4 \frac{a^3}{b^4}\epsilon^4, \\
\beta_2    &= \frac{a}{b}\epsilon^2\tau,
\end{aligned}
\end{cases}
\end{equation}
where $\tau,\tilde{u}$ and $\tilde{v}$ are given by
\cref{eq:tau_orbital,eq:third_order_uhat,eq:third_order_vhat}, respectively.

\begin{remark}
By expanding $\tilde {u}\left(\tanh\left(\xi(s)\right)\right)$ in $\epsilon$ up
to third-order we obtain
\[
    u(s) = u_0(s) + u_1(s) \epsilon + u_2(s) \epsilon^2 + u_3(s) \epsilon^3,
\]
where
\begin{equation}
\label{eq:u_orbital}    
\begin{aligned}
    u_0(s) ={}& 6 \tanh^2(s) - 4, \quad
    u_1(s) = -\frac{72 b \tanh(s) \sech^2(s) \log(\cosh(s))}{7 a}, \\
    u_2(s) ={}& \frac{18}{49} \sech^2(s) \left(-12 s \tanh (s)-24 (\log(\cosh(s))-1) \log(\cosh(s)) \right. \\
              &  \left. +3 \sech^2(s) (32 \log(\sech(s))+12 \log(\cosh(s)) (\log(\cosh(s))+2)-5)+14\right), \\
    u_3(s) ={}& -\frac{27 \sech^5(s)}{2401}  \left(-273 \sinh(s)+91 \sinh(3 s)+84 s \cosh(3 s) (2 \log(\cosh(s))-1) \right. \\
              & -84 s \cosh(s) (6 \log(\cosh(s))-1)-1232 \sinh(s) \log^3(\cosh(s)) \\
              & +112 \sinh(3 s) \log^3(\cosh(s))+2016 \sinh(s) \log^2(\cosh(s)) \\
              & -336 \sinh(3s) \log^2(\cosh(s))+904 \sinh(s) \log(\cosh(s)) \\
              & \left. -104 \sinh(3 s) \log(\cosh(s))\right).
\end{aligned}
\end{equation}
Together with \cref{eq:tau_orbital}, this is precisely the solution obtained
by using the regular perturbation method to
\cref{eq:second_order_nonlinear_oscillator} with phase condition $\dot u(0)=0$.

Note that for the conjecture in~\cite[Section 7]{Al-Hdaibat2016} to
hold, the phase condition \cref{eq:c_i_first_phase_condition} must be satisfied.
\end{remark}

\subsubsection{Non-uniqueness homoclinic solution}
\label{sec:phase_condition}
Note that in \cref{thm:i_order_solution} we could have assumed the solutions of
\cref{eq:ith_order_equation} to be of the form
\begin{equation}
    \label{eq:u_zeta_second_form}
    \tilde u(\zeta)  = \sum_{i\geq 0} \left(\sigma_i \zeta^2 + \delta_i + \gamma_i (1-\zeta^2)
    \tilde u_0^\prime(\zeta) \right) \epsilon^i,
\end{equation}
where $\gamma_0=0$ and $\gamma_i\in\mathbb{R}$ are constants to
be determined by some phase condition. Thus, we have freedom in \cref{eq:xi_i}
and in \cref{eq:u_zeta_second_form} both originating from the non-uniqueness
of the homoclinic orbit. The solutions \cref{eq:xi_1,eq:xi_2} together with
\begin{align*}
    \xi_3(s) =&{}
    \frac{1}{4802} \left(18 \left[49 \gamma_1 (7 \gamma_1+4)+84 s \tanh (s)+92 \log
        (\sech(s))-105\right] \right. \\
        & -7 \sech^2(s) \left[-7 \gamma_1 (7 \gamma_1-3) (35 \gamma_1+9)+18 (7 \gamma_1 \
(7 \gamma_1+4)+9) \log (\sech(s)) \right. \\
        & \left.\left. +216 \log ^2(\cosh (s))-234\right] \right)
\end{align*}
and
\begin{equation*}
    \gamma_1 = \frac{1}{35} \left(-4-\frac{59}{\sqrt[3]{836+15
                \sqrt{4019}}}+\sqrt[3]{836+15 \sqrt{4019}}\right)
\end{equation*}
also leads to the phase condition $\xi_i(0)=0$ for $i=1,2,3$. However,
$v_3(0)=0$ no longer holds. The solutions $\tilde u$ and $\tilde v$ are now
given by
\begin{align*}
\tilde{u}(\zeta) 
={}& 2 + \left(1-\zeta^2\right) \left( -6 + 12 \gamma_1 \zeta  \epsilon
    + \left(6 \gamma_1^2-\frac{18}{49}\right)\epsilon^2 
    + \mathcal{O}(\epsilon^4) \right), \\
\tilde v(\zeta) 
    ={}& (1-\zeta^2) \tilde \omega (\zeta) u'(\zeta) 
    = (1-\zeta^2) \tilde \omega(\zeta) \sum_{i=0} \left( \sigma_i \zeta   +
         12 \gamma_i \left( 1 - 3 \zeta^2 \right) \right) \epsilon^i \\
	  ={}& (1-\zeta^2)\left[12\zeta + 
        \left(12\gamma_1 - 36\frac{2 + 7 \gamma_1\zeta^2}{7}\right) \epsilon +
        \nonumber \right.\\
       & 6\zeta\frac{15 - 168\gamma_1 - 245\gamma_1^2 + 
         3(9 + 7 \gamma_1(16 + 7 \gamma_1))\zeta^2}{49} \epsilon^2 + \nonumber \\
     {}& \left( 216\zeta^2\frac{-18 + 7\zeta^2}{2401} 
        + 6\gamma_1^3(-3 + 2\zeta^2 + \zeta^4) -
      72\gamma_1^2\frac{1 - 6\zeta^2 + 4\zeta^4}7  \right. \nonumber \\
     {}& \left. \left. - 54 \gamma_1\frac{-1 - 6\zeta^2 + 15\zeta^4}{49}
        \right)\epsilon^3 
        + \mathcal{O}(\epsilon^4) \right] \nonumber.
\end{align*}
The numerical simulations in \cref{sec:topological_normal_form} show that for
the normal form \cref{eq:universal_unfolding} these asymptotics are more
accurate than the asymptotics derived in
\cref{sec:third_order_homoclinic_approximation_LP}.

\subsubsection{Comparison with the nonlinear periodic time-reparametrization}
In~\cite{Algaba_2019} a different approach is used to approximate the homoclinic
solution near the quadratic normal form of a generic codimension 2
Bogdanov-Takens bifurcation. The approach there is an application of the
so-called Perturbation-Incremental Method described in~\cite{Xu_1996}. Consider
strongly nonlinear oscillators of the form
\begin{equation}
  \label{eq:nonlinear_oscillator}
  \ddot x + g(x) = \lambda f(x,\dot x, \mu) \dot x, \qquad \
\end{equation}
where $g$ and $f$ are arbitrary nonlinear functions, and
$\lambda$ and $\mu$ are parameters.

The authors in~\cite{Xu_1996} perform a  nonlinear periodic time
reparametrization of the form
\begin{equation}
\label{eq:nonlinear_periodic_reparametrization}  
\frac{d\phi}{dt} =\Phi(\phi), \qquad \Phi(\phi+2\pi)=\Phi(\phi)
\end{equation}
to the system \cref{eq:nonlinear_oscillator}. Then it is assumed that there is a
homoclinic orbit present which can be approximated by the solution
\[
u(\phi) = p \cos(2\phi) + q,
\]
where $p$ and $q$ are constants to be approximated. In~\cite[Theorem
1]{Algaba_2019} it is shown that the solutions $u(\phi)$ and $u(s)$ are related
to each other through
\[
    \Phi(\phi) = \frac{\sqrt 2}{2} \omega(\xi) \sin\phi
.\] 

It follows that one should be able to factor out the term
$\sin{\phi}$ in \cref{eq:nonlinear_periodic_reparametrization}. This is
indeed precisely what we see in the transformation $\Phi$
in~\cite[(41)]{Algaba_2019}. Thus, although the nonlinear periodic time
reparametrization is analytically equivalent to the polynomial generalized
Lindstedt-Poincar\'e method, it is geometrically \emph{less intuitive} than
using hyperbolic functions and computationally \emph{more expensive} than using
polynomials.

We also would like to point out that the singular rescaling
\begin{equation*}
    w_0 = u \epsilon^2, \quad
    w_1 = v \epsilon^3, \quad
    \beta_1 = -\epsilon^4, \quad 
    \beta_2 = \tau \epsilon^2, \quad 
    s = \epsilon \eta, \quad (\epsilon \neq 0),
\end{equation*}
used in~\cite{Algaba_2019} applied to the quadratic Bogdanov-Takens normal form
with coefficient $a=1$ and $b=1$ results in the $\sqrt 2$ turning up in the
calculations of homoclinic approximation. From a computational point of view,
this is less ideal to work with.

\begin{remark}
    Note that the polynomial generalized Lindstedt-Poincar\'e method described
    above only depends on the existence of the zeroth-order solution.
    Therefore, this method can be applied in similar situations where
    homoclinic orbits emanate from codimension two Bogdanov-Takens bifurcation
    points. For example, the transcritical codimension two bifurcation treated
    in~\cite{Hirschberg_1991}, see also~\cite[Appendix C.2]{Bosschaert@2016}.
    Furthermore, under certain symmetry present in the ODE heteroclinic
    solutions can also emanate from codimension two Bogdanov-Takens points
    which can be approximated using the polynomial Lindstedt-Poincar\'e method
    as described above.
\end{remark}

\section{Homoclinic asymptotic expansion in \texorpdfstring{$n$}{n}-dimensional
systems}
\label{sec:homoclinic_asymptotics_n_dimension}
In this section, we will provide third-order approximations to the
homoclinic solution for \cref{eq:ODE} emanating from a generic codimension two
Bogdanov-Takens bifurcation assumed to be at $x_0 \equiv 0$ and $\alpha_0 = 0$.
A distinction between asymptotics derived with the smooth orbital and the smooth
normal form is necessary. In case of the smooth orbital normal form, a further
subdivision is made between the perturbation method used. This is a price we
have to pay for using this simpler normal form. Using the obtained
transformations for lifting the homoclinic orbits from the normal form to the
parameter-dependent center manifold in $n$-dimensional systems, we show the
homoclinic asymptotics arising from the smooth orbital and smooth normal form
are (asymptotically) equivalent in \cref{sec:comparison_homoclinic_predictors}.
We finish this section with our implementation in MatCont.

\subsection{Homoclinic approximation using the smooth orbital normal form}
First, we consider the situation where we have obtained a homoclinic predictor for
the smooth orbital normal form \cref{eq:normal_form_orbital}. Depending on the
used method to approximate the homoclinic solution, i.e., the regular perturbation
or the Lindstedt-Poincar\'e method, we substitute either
\cref{eq:third_order_predictor_RPM_tau} or
\cref{eq:third_order_predictor_LP_tau}, into the parameter-dependent center manifold
transformation $H$ and $K$ defined in \cref{eq:H_expansion,eq:K_expansion}.
By truncating the higher-order terms in $w$ and $\beta$ we obtain the following
approximation $(\bar x^o, \bar \alpha^o)$ to the homoclinic solution
\begin{align}
\label{eq:x_eta_espilon} 
\bar x^o(\eta, \epsilon)={}& q_0w_0(\eta) + q_1w_1(\eta) + H_{0010}\beta_1 + H_{0001} \beta_2 
+ \frac12 H_{2000}w_0^2(\eta) + H_{1100}w_0w_1(\eta) \\
                         & + \frac12 H_{0200}w_1^2(\eta) + H_{1010}w_0(\eta)\beta_1 + H_{1001}w_0(\eta)\beta_2 + H_{0110}w_1(\eta)\beta_1 \nonumber \\
                         & + H_{0101}w_1(\eta)\beta_2 + \frac12 H_{0002}\beta_2^2+ H_{0011}\beta_1\beta_2 + \frac16 H_{3000}w_0^3(\eta) \nonumber \\
                         & + \frac12 H_{2100}w_0^2(\eta)w_1(\eta) + H_{1101}w_0(\eta)w_1(\eta)\beta_2 + \frac12 H_{2001}w_0^2(\eta)\beta_2 \nonumber \\
                         & + \frac{1}{6}H_{0003}\beta_2^3 + \frac12 H_{1002}w_0(\eta)\beta_2^2 + \frac12 H_{0102}w_1(\eta)\beta_2^2, \nonumber \\
\label{eq:alpha_espilon}
\bar \alpha^o(\epsilon) ={}& K_{10}\beta_1 + K_{01}\beta_2 + \frac{1}{2}K_{02}\beta_2^{2} + K_{11}\beta_1\beta_2 + K_{03} \frac16 \beta_2^3.
\end{align}

Next, we use \cref{eq:theta_expansion} to approximate $\eta(t)$ from the relation
\begin{equation}
    \label{eq:dt_deta}
		\frac{dt}{d\eta} = 1 + \theta_{1000}w_0(\eta) + \theta_{0001}\beta_2,
\end{equation}
where $w_0$ and $\beta_2$ are defined in \cref{eq:third_order_predictor_RPM_tau}
when using the predictor obtained by the regular perturbation method and
defined in \cref{eq:third_order_predictor_LP_tau} when using the predictor
obtained by the Lindstedt-Poincar\'e method. We will consider these two
cases separately below.  

\subsection*{Regular perturbation method}
Integrating \cref{eq:dt_deta} with respect to $\eta$ yields
\begin{equation}
\label{eq:tOfTauRegular}
\begin{aligned}
    t(\eta) ={}& \int 1 + \theta_{1000} \frac{a}{b^2}
				u\left(\frac{a}{b}\epsilon \eta \right) \epsilon^2
        + \theta_{0001}\frac{a}{b}\epsilon^2\left(\frac{10}{7}+ \frac{288}{2401}\epsilon^2
				+ \mathcal{O}(\epsilon^4) \right) \, d\eta \\
				={}& \eta \left( 1 + \theta_{0001}\frac{a}{b}\epsilon^2\left(\frac{10}{7}+ \frac{288}{2401} \epsilon^2 + \mathcal{O}(\epsilon^4) \right) \right) + 
				 \theta_{1000} \frac{a}{b^2} \epsilon^2 \int  
				     u\left(\frac{a}{b}\eta\epsilon \right)
					 \, d\eta,
\end{aligned}
\end{equation}
where $u$ is the third-order approximation given in
\cref{eq:third_order_predictor_RPM_tau}. To approximate the integral in the
equation above uniformly in $\eta$, we make the substitution $s = \frac a b \eta
\epsilon$. Then
\begin{equation*}
\begin{aligned}
\int  u\left(\frac{a}{b}\eta\epsilon\right) \, d\eta
= \frac{b}{a} \frac{1}{\epsilon} \int u(s) \, ds,
\end{aligned}
\end{equation*}
where the integral on the right-hand side can be calculated to be
\begin{equation}
\label{eq:u_int_s}
\begin{aligned}
    \int u(s) \, ds &= 2 (s-3 \tanh s) -\frac{9}{7} \sech^2s (\cosh (2 s)-4 \log (\cosh s)-1)\epsilon \\
                    &-\frac{9}{49} \sech^3 s \left[2 \sinh s \left(\cosh (2 s)-12 \log ^2(\cosh s)+6\right)-12 s \cosh s\right]\epsilon^2 \\ 
    &-\frac{27 \sech^4s}{2401} \left[\cosh(4s)+\cosh(2s) \left(-112
    \log^3(\cosh s)+168 \log ^2(\cosh s) \right. \right. \\
    &+188 \left. \log (\cosh s)+7\right)+8 \left(28 \log ^3(\cosh s)-21 \log ^2(\cosh s) \right. \\
    & \left.\left. -29 \log (\cosh s)-21 s \sinh (2 s) \log (\cosh s)-1\right)\right] \epsilon^3
        + \mathcal{O}(\epsilon^4).
\end{aligned}
\end{equation}
Here the constants of integration are calculated such that $t(0)=0$.  Thus, we
obtain a third-order approximation for $t(\eta)$. 

\subsection*{Lindstedt-Poincar\'e}
The method is similar as for the regular perturbation method. We first
integrate \cref{eq:dt_deta} with respect to $\eta$, so that
\begin{equation}
\label{eq:tOfTau}
\begin{aligned}
    t(\eta) ={}& \eta \left( 1 + \theta_{0001}\frac{a}{b}\epsilon^2\left(\frac{10}{7}+ \frac{288}{2401} \epsilon^2 + \mathcal{O}(\epsilon^4) \right) \right) + 
				 \theta_{1000} \frac{a}{b^2} \epsilon^2 \int  
				     \hat{u}\left(\xi\left(\frac{a}{b}\eta\epsilon \right)\right)
					 \, d\eta,
\end{aligned}
\end{equation}
To approximate the integral in the above equation uniformly in $\eta$, we make the
substitution $\tilde{\xi}(\eta) = \xi\left(\frac{a}{b}\epsilon \eta \right)$. Then
\begin{equation*}
\begin{aligned}
\int  \hat{u}\left(\xi\left(\frac{a}{b}\eta\epsilon \right)\right) \, d\eta
= \frac{b}{a} \frac{1}{\epsilon} \int \hat{u}(\tilde\xi)/\omega(\tilde\xi) \,
		d\tilde\xi.
\end{aligned}
\end{equation*}
Expanding the integrand $ \hat{u}(\tilde\xi)/\omega(\tilde\xi)$, up
to order three in $\epsilon$ and integrating with respect to  $\tilde\xi$ yields
\begin{equation}
\label{eq:int_u_s_regular_perturbation}
\begin{aligned}
\int
\frac{1}{\omega(\tilde\xi)} \hat{u}\left(\tilde\xi\right) \, d\tilde\xi
={}& 2 \tilde\xi -6 \tanh (\tilde\xi ) + 
     \left( \frac{18 \sech^2(\tilde\xi )}{7}+\frac{12}{7} \log (\cosh (\tilde\xi ))
		 \right) \epsilon \\ &+
		 \frac{9}{49} \left(4 \tilde\xi -9 \tanh (\tilde\xi )+5 \tanh (\tilde\xi ) \sech^2(\tilde\xi
		 )\right) \epsilon^2 \\ &+
		 \frac{18 \left(-21 \sech^4(\tilde\xi )+47 \sech^2(\tilde\xi )+8 \log (\cosh
		 (\tilde\xi ))\right)}{2401} \epsilon^3 + \mathcal{O}(\epsilon^4).
\end{aligned}
\end{equation}
Substituting $\tilde\xi$ with  $\xi(\frac{a}{b}\eta\epsilon)$ gives the relation
$t(\eta)$ up to order three in $\epsilon$. 

Since we are interested in the inverse relation, i.e., $\eta(t)$, we numerically
solve the equation 
\begin{equation*}
    t(\eta) - t = 0,
\end{equation*}
for $\eta$. This can easily be done within machine precision.

\subsection{Homoclinic approximation using the smooth normal form}
To lift the homoclinic approximation obtained for the smooth normal form to the
parameter-dependent center manifold, we simply substitute either
\cref{eq:third_order_predictor_LP_tau_smooth,eq:third_order_predictor_smooth_RPM_tau}
into $H$ and $K$. Thus, we obtain \cref{eq:x_eta_espilon,eq:alpha_espilon}, where
$\eta$ is replaced by $t$, i.e., 
\begin{align*}
\bar x^s(t, \epsilon)={}& q_0w_0(t) + q_1w_1(t) + H_{0010}\beta_1 + H_{0001} \beta_2 
+ \frac12 H_{2000}w_0^2(t) + H_{1100}w_0w_1(t) \\
                         & + \frac12 H_{0200}w_1^2(t) + H_{1010}w_0(t)\beta_1 + H_{1001}w_0(t)\beta_2 + H_{0110}w_1(t)\beta_1 \nonumber \\
                         & + H_{0101}w_1(t)\beta_2 + \frac12 H_{0002}\beta_2^2+ H_{0011}\beta_1\beta_2 + \frac16 H_{3000}w_0^3(t) \nonumber \\
                         & + \frac12 H_{2100}w_0^2(t)w_1(t) + H_{1101}w_0(t)w_1(t)\beta_2 + \frac12 H_{2001}w_0^2(t)\beta_2 \nonumber \\
                         & + \frac{1}{6}H_{0003}\beta_2^3 + \frac12 H_{1002}w_0(t)\beta_2^2 + \frac12 H_{0102}w_1(t)\beta_2^2, \nonumber \\
\bar \alpha^s(\epsilon) ={}& K_{10}\beta_1 + K_{01}\beta_2 + \frac{1}{2}K_{02}\beta_2^{2} + K_{11}\beta_1\beta_2 + K_{03} \frac16 \beta_2^3.
\end{align*}
Note however that the coefficients of the mappings $H$ and $K$ are calculated
as outlined in
\cref{sec:center-manifold-reduction-without-time-reparametrization}.

\subsection{Comparison between smooth and orbital homoclinic predictors}
\label{sec:comparison_homoclinic_predictors}
Using the above transformations, we show that the homoclinic predictor for the
smooth normal form \cref{eq:BT_smooth_nf}, see
\cref{sec:asymptotics-for-homoclinic-solution-for-the-smooth-normal-form}, is
asymptotically equivalent to the orbital predictor derived in
\cref{sec:third_order_homoclinic_approximation_LP}. Thus, we assume that
\cref{eq:ODE} is given by 
\begin{equation}
    \label{eq:compare:f_normal_form} 
    f(x_1(t), x_2(t), \alpha_1, \alpha_2) := \left(\begin{array}{l}
            x_1(t),\\
            \alpha_1 + \alpha_2 x_1(t) + ax_0^2(t) + b x_0(t)x_1(t) \\
    \end{array}\right),
\end{equation}
where
\begin{equation*}
    g(x_1(t), x_2(t), \alpha_1, \alpha_2) =  a_1\alpha_2 x_0^2(t) + 
    b_1\alpha_2 x_0(t) x_1(t) + ex_0^{2}(t)x_1(t) + dx_0^3(t). \\
\end{equation*}

First we will focus on the predictors for the parameters. Using the procedure
outlined in \cref{subsec:center_manifold_tranformation_orbital}, we obtain that the
coefficients for the parameter transformation $K$ are given by
\begin{equation*}
\begin{gathered}
				K_{10} = \begin{pmatrix} 1 \\ \frac{ae -bd}{a^2} \end{pmatrix},\quad
				K_{01} = \begin{pmatrix} 0 \\ 1 \end{pmatrix},\quad
				K_{11} = \frac{3a_1b-4ab_1+2d}{ab}\begin{pmatrix}  1
								\\ \frac{(ae-bd)}{a^2} \end{pmatrix},\quad \\
				K_{02} = \begin{pmatrix} 0 \\ \frac{2a_1b-2ab_1+d}{ab} \end{pmatrix},\quad
				K_{03} = \begin{pmatrix} 0 \\ 0 \end{pmatrix}.
\end{gathered} 
\end{equation*}

From equation \cref{eq:alpha_espilon} we obtain the following approximation
\begin{equation}
\label{eq:compare:alpha_orbital}
\begin{pmatrix}
\bar \alpha_1^o \\[5pt] \bar \alpha_2^o
\end{pmatrix}
=
\begin{pmatrix}
-\frac{4a^3}{b^4} \epsilon^4
        + \frac{40a^3(-3a_1b+4ab_1-2d)}{7b^6} \epsilon^6
        + \frac{1152 a^3(-3a_1b + 4ab_1 -2d)}{2401 b^6} \epsilon^8
				+ \mathcal{O}(\epsilon^{10}) \\
\frac{10a}{7b} \epsilon^2
				+ \left( \frac{288b}{2401a} + \frac{50a(2a_1b-2ab_1+d)}{49b^3}
				+ \frac{4a(bd-ae)}{b^4} \right) \epsilon^4
				+ \mathcal{O}(\epsilon^{6})
\end{pmatrix}.
\end{equation}
Since the equation \cref{eq:compare:f_normal_form} is the smooth normal form, we can 
directly use \cref{eq:third_order_predictor_LP_tau_smooth} to obtain the
approximation
\begin{equation}
\label{eq:compare:alpha_smooth}
\begin{pmatrix}
\bar \alpha_1^s \\[5pt] \bar \alpha_2^s
\end{pmatrix}
=
\begin{pmatrix}
-\frac{4}{a} \epsilon^4 \\[5pt]
\frac{10b}{7a} \epsilon^2
				+ \frac{98 b (50 a b_1+73 d)-9604 a e-2450 a_1 b^2+288 b^3}{2401 a^2 b} \epsilon^4
				+ \mathcal{O}(\epsilon^6)
\end{pmatrix}.
\end{equation}
To compare these two predictors, we eliminate the parameter $\epsilon$ from both
equations. It would be tempting to first make a substitution for $\epsilon^2$ in
the equations. However, since for the orbits we need odd powers in $\epsilon$,
we will continue without this substitution. We assume $\alpha_1$ to be
positive, which implies that the coefficient $a$ is negative. The case that
$\alpha_1$ is negative is treated similarly and has been verified as well.

To eliminate $\epsilon$ from \cref{eq:compare:alpha_orbital}, we expand $\epsilon$
as a function of $\sqrt[4]{\alpha_1}$. Solving the resulting equation for real
positive $\epsilon$ we obtain
\begin{equation}
    \label{eq:compare:epsilon0}
    \epsilon^o(\alpha_1) = \frac{\sqrt[4]{-a} b}{\sqrt{2}a}\sqrt[4]{\alpha_1}-\frac{5 b  (-4 a
    b_1+3 a_1 b+2 d)}{28 \sqrt{2} a^2 \sqrt[4]{-a}}\sqrt[4]{\alpha_1} ^3 + \mathcal{O}(\sqrt[4]{\alpha_1}^5).
\end{equation}
Obviously, for the smooth predictor we obtain
\begin{equation*}
    \epsilon^s(\alpha_1) = \frac{\sqrt[4]{-a}}{\sqrt2} \sqrt[4]{\alpha_1}.
\end{equation*}
Here we added superscripts $o$ and $s$ to distinguish the different
$\epsilon$'s in the orbital and smooth homoclinic predictors,
respectively.

Substituting $\epsilon=\epsilon^o(\alpha_1)$ into the second equation of
\cref{eq:compare:alpha_orbital} yields
\begin{align*}
    \alpha_2(\alpha_1) ={}& \frac{5b}{7\sqrt{-a}} \sqrt{\alpha_1}
    + \frac{-49 b (50 a b_1+73 d)+4802 a e+1225 a_1 b^2-144 b^3}{4802 a^2} \alpha_1 \\ 
                          & + \mathcal{O}(\alpha_1^{3/2}).
\end{align*}
It can readably be seen that by eliminating $\epsilon$ from
\cref{eq:compare:alpha_smooth} using that $a<0$ we obtain the same expression,
i.e., the predictors agree up to the desired order.

Next, we turn our attention to the approximation of the homoclinic orbits. Due
to the various time transformations involved in the predictors, we compare the
asymptotic expansions of the orbital with the smooth homoclinic predictor.
Therefore, we can directly use the asymptotic obtained from the regular
perturbation method for both predictors. Thus, for the orbital predictor,
we will use 
\begin{equation}
   \label{eq:w0_w1_eta}
   \left\{
   \begin{aligned}
       w_0(\eta)  &= \frac{a}{b^2} \left( \sum_{i=0}^3 u_i(\frac{a}{b}\epsilon\eta) \epsilon^i +
       \mathcal{O}(\epsilon^4) \right)   \epsilon^2, \\
       w_1(\eta)  &= \frac{a^2}{b^3} \left( \sum_{i=0}^3 \dot u_i(\frac{a}{b}\epsilon\eta) \epsilon^i +
       \mathcal{O}(\epsilon^4) \right)   \epsilon^3, \\
   \end{aligned} 
   \right.
\end{equation}
with $u_i(i=1,2,3)$ is given by \cref{eq:u_orbital}, while for the smooth
predictor we will use \cref{eq:third_order_predictor_smooth_RPM_tau} instead.

Following the procedure as outlined in
\cref{subsec:center_manifold_tranformation_orbital}, we obtain that the coefficients
of the transformations $H$ and $\theta$ for the smooth orbital normal form
\cref{eq:normal_form_orbital} are given by
\begin{equation*}
\begin{gathered}
   q_{0} = \begin{pmatrix} 1 \\  0 \end{pmatrix}\!, \quad
   q_{1} = \begin{pmatrix} 0 \\  1 \end{pmatrix}\!, \quad
H_{2000} = \begin{pmatrix} -\frac{d}{2 a} \\ 0 \end{pmatrix}\!, \quad 
H_{1100} = \begin{pmatrix} \frac{-3 b d + 4 a e}{12 a^2} \\  0 \end{pmatrix}\!, \\
H_{0200} = \begin{pmatrix} 0 \\  \frac{-3 b d + 4 a e}{6 a^2} \end{pmatrix}\!, \quad
H_{3000} = \begin{pmatrix} 0 \\  \frac{-3 b d}{2 a} + 2 e \end{pmatrix}\!, \quad
H_{2100} = \begin{pmatrix} 0 \\  \frac{b (-3 b d + 4 a e)}{6 a^2} \end{pmatrix}\!, \\
H_{0010} = \begin{pmatrix} \frac{d}{4 a^2} \\  0 \end{pmatrix}\!, \quad
H_{1001} = \begin{pmatrix} \frac{-2 a b_1 + a_1 b + d}{a b} \\  0 \end{pmatrix}\!, \qquad
H_{0101} = \begin{pmatrix} 0 \\  \frac{-6 a b_1 + 4 a_1 b + 3 d}{2 a b} \end{pmatrix}\!, \\
H_{1101} = \begin{pmatrix} 0 \\  \frac{-3 (6 a b_1 - 4 a_1 b + b^2 - 3 d) d + 4 a b e}{12 a^2 b} \end{pmatrix}\!, \quad
H_{0102} = \begin{pmatrix} 0 \\  \frac{(6 a b_1 - 4 a_1 b - 3 d) (2 a b_1 - 2 a_1 b - d)}{2 a^2 b^2} \end{pmatrix}\!, \\
H_{1010} = \begin{pmatrix} 0 \\  \frac{-3 b d + 4 a e}{12 a^2} \end{pmatrix}\!, \quad
\theta_{1000} = -\frac{d}{2 a}, \quad
\theta_{0001} = -\frac{-2 a b_1 + 2 a_1 b + d}{2 a b},
\end{gathered}
\end{equation*}
while
\begin{align*}
H_{0001} = H_{2001} = H_{0002} = H_{1002} = H_{0003} = H_{0011} = H_{0110} = 
\begin{pmatrix} 0 \\ 0 \end{pmatrix}\!.
\end{align*}

Thus, the third-order homoclinic predictor using the smooth orbital normal form
in $\eta$ is given by 
\begin{equation*}
\begin{aligned}
    \bar x_\epsilon^o(\eta) =& \begin{pmatrix} 1 \\  0 \end{pmatrix} w_0(\eta) +
\begin{pmatrix} 0 \\ 1 \end{pmatrix} w_1(\eta) + 
\begin{pmatrix} -\frac{d}{2 a} \\ 0 \end{pmatrix} w_0^2(\eta) + 
\begin{pmatrix} \frac{-3 b d + 4 a e}{12 a^2} \\  0 \end{pmatrix} w_0(\eta) w_1(\eta) + \\
        & \begin{pmatrix} 0 \\ \frac{-3 b d + 4 a e}{6 a^2} \end{pmatrix} w_1^2(\eta) +
          \begin{pmatrix} 0 \\ \frac{-3 b d}{2 a} + 2 e \end{pmatrix} w_0^3(\eta) +
          \begin{pmatrix} 0 \\ \frac{b (-3 b d + 4 a e)}{6 a^2} \end{pmatrix} w_0^2(\eta) w_1(\eta) + \\
        & \begin{pmatrix} \frac{d}{4 a^2} \\  0 \end{pmatrix} \beta_1 +
          \begin{pmatrix} \frac{-2 a b_1 + a_1 b + d}{a b} \\  0 \end{pmatrix} w_0(\eta) \beta_2 +
          \begin{pmatrix} 0 \\ \frac{-6 a b_1 + 4 a_1 b + 3 d}{2 a b} \end{pmatrix} w_1(\eta) \beta_2 + \\
        & \begin{pmatrix} 0 \\ \frac{-3 (6 a b_1 - 4 a_1 b + b^2 - 3 d) d + 4 a b e}{12 a^2 b} \end{pmatrix} w_0(\eta) w_1(\eta) \beta_2 + \\
        & \begin{pmatrix} 0 \\ \frac{(6 a b_1 - 4 a_1 b - 3 d) (2 a b_1 - 2 a_1 b - d)}{2 a^2 b^2} \end{pmatrix} w_1(\eta) \beta_2^2 + 
          \begin{pmatrix} 0 \\ \frac{-3 b d + 4 a e}{12 a^2} \end{pmatrix} w_0(\eta) \beta_1,
\end{aligned}
\end{equation*}
where $w_{0,1}$ are given by \cref{eq:w0_w1_eta} and $\beta_{1,2}$ by
\cref{eq:third_order_predictor_RPM_tau}.
Since \cref{eq:compare:f_normal_form} is the smooth normal form
\cref{eq:BT_smooth_nf} we obtain from
\cref{eq:third_order_predictor_smooth_RPM_tau} the third-order homoclinic
approximation in $t$
\begin{equation*}
\bar x_\epsilon^s(t) =  \frac1{a} \sum_{i=0}^3
\begin{pmatrix}
 \left(      u_i(\epsilon t) \epsilon^i + \mathcal{O}(\epsilon^4) \right) \epsilon^2 \\
 \left( \dot u_i(\epsilon t) \epsilon^i + \mathcal{O}(\epsilon^4) \right) \epsilon^3
\end{pmatrix}.
\end{equation*}
To relate the smooth orbital predictor $\bar x_\epsilon^o(\eta)$ with the smooth
predictor $\bar x_\epsilon^s(t)$, we need to consider the time transformation
\begin{equation*}
\begin{aligned}
    t_\epsilon(\eta) = \eta \left( 1 + \theta_{0001}\frac{a}{b}\epsilon^2\left(\tau_0 + \tau_2 \epsilon^2 \right) \right) + 
        \theta_{1000} \frac{1}{b} \epsilon \int u(s) \, ds,
\end{aligned}
\end{equation*}
where
\[
    \theta_{1000} = -\frac{d}{2 a}, 
    \quad \theta_{0001} = -\frac{-2 a b_1 + 2 a_1 b + d}{2 a b}, 
    \quad s = \frac{a}{b} \eta \epsilon
\] 
and the integral is given by \cref{eq:int_u_s_regular_perturbation}. We
eliminate $\epsilon$ from $t_\epsilon(\eta)$ by substituting $\epsilon$ by
$\epsilon^o(\alpha_1)$ defined in \cref{eq:compare:epsilon0}. Subsequently, we
substitute $t_{\epsilon^o(\alpha_1)}(\eta)$ into $\bar x_{\epsilon^s(\alpha_1)}^s(t)$.
Thus, we now have two approximations, both parametrized by $\tau$. To compare these
approximations, we first rescale $\eta$ by $\frac{b}{a}\frac{\eta}{\epsilon}$,
otherwise the expansions in $\alpha_1$ become polynomial. We thus arrive at the
following equation which should be satisfied
\begin{equation*}
    \bar x_{\epsilon^s(\alpha_1)}^s\left(t_{\epsilon^o(\alpha_1)}\left(\frac{b}{a}\frac{\eta}{\epsilon}\right)\right) =
    \bar x_{\epsilon^o(\alpha_1)}^o\left(\frac{b}{a}\frac{\eta}{\epsilon} \right) +
    \begin{pmatrix}
        \mathcal{O}(\alpha_1^{3/2}) \\
        \mathcal{O}(\alpha_1^{7/4})
    \end{pmatrix}.
\end{equation*}

Expanding and simplifying the first component of
$\bar{x}_{\epsilon^o(\alpha_1)}^o(\frac{b}{a}\eta/\epsilon)$ in $\alpha_1$
gives
\begin{equation*}
    \begin{aligned}
        &\left(\bar x_{\epsilon^o(\alpha_1)}^o\left(\frac{b}{a}\frac{\eta}{\epsilon}\right)\right)_1 
        = \frac{3 \sech^2\eta-1}{\sqrt{-a}} \sqrt{\alpha_1} +\frac{18 \sqrt{2} b \tanh \eta \sech^2\eta \log (\cosh \eta)}{7 (-a)^{5/4}} \alpha_1^{3/4} \\
        &\qquad \frac{1}{196 a^2}\left[-6 \sech^2\eta \left(7 \left(5 a_1 b+6 b^2+7 d\right)-72 b^2 (\log (\cosh\eta)-1)\log (\cosh\eta) \right. \right. \\ 
        &\qquad \left. -36 b^2 \eta \tanh \eta\right)+70 a_1 b + 9 \sech^4\eta \left(24 b^2 (2-3 \log (\cosh\eta )) \log (\cosh\eta) \right. \\
        &\qquad \left. \left. +30 b^2+49 d\right)+98 d \right] \alpha_1 + \left[ \tanh \eta \left\{-18 b \log (\cosh \eta) \left(-980 a b_1+1225 a_1 b\right. \right. \right. \\ 
        &\qquad \left. -336 b^2(\log (\cosh \eta)-3) \log (\cosh \eta) +312 b^2+1176 d\right)-21 \sech^2\eta \left(-1372 a e \right. \\
        &\qquad +36 b \log (\cosh \eta) \left(6 b^2 \log (\cosh \eta) (4 \log (\cosh \eta)-7)-18 b^2-49 d\right)+234 b^3 \\
        &\qquad \left. \left. +1029 b d\right)-9604 a e+4914 b^3+7203 b d\right\} -4536 b^3 \eta  \left(-2 \log (\cosh \eta) \right. \\
        &\qquad \left. \left.+\sech^2\eta (3 \log (\cosh \eta)-1)+1\right)\right] \frac{\sech^2\eta}{4802 \sqrt{2} (-a)^{11/4}} \alpha_1^{5/4}
            + \mathcal{O}(\alpha_1^{3/2}).
    \end{aligned}
\end{equation*}
Similarly, for the second component of $\bar x_{\epsilon^o(\alpha_1)}^o(\eta/\epsilon)$ we obtain
\begin{equation*}
    \begin{aligned}
        & \left(\bar x_{\epsilon^o(\alpha_1)}^o\left(\frac{b}{a}\frac{\eta}{\epsilon}\right)\right)_2 
        = \frac{3 \sqrt{2} \tanh \eta \sech^2\eta}{\sqrt[4]{-a}} \alpha_1^{3/4} + \frac{9 b \sech^4\eta }{7 a}(\cosh (2 \eta)-2 (\cosh (2 \eta)-2) \\
        &\qquad \log (\cosh \eta)-1) \alpha_1 + -\frac{3 b \sech^4\eta}{196 \sqrt{2} (-a) ^{7/4}} (\sinh (2 \eta) (35 a_1-144 b (\log(\cosh \eta)-2) \\
        &\qquad \log (\cosh \eta)+48 b)+72 b (2 \eta-\eta \cosh (2 \eta)+\tanh \eta (2 \log (\cosh \eta) (6 \log (\cosh \eta)-7)\\
        &\qquad -3))) \alpha_1^{5/4} +\left(2 \left(36 b \log (\cosh \eta)  \left(490 a b_1-490 a_1 b+84 b^2 \log (\cosh \eta) (2 \log ( \right. \right. \right. \\
        &\qquad \left. \left. \cosh \eta)-9)+222 b^2-245 d\right)+90 b \left(-98 a b_1+98 a_1 b+111 b^2\right)-9604 a e \right. \\
        &\qquad \left. +11613 b d\right) + 3 \sech^2\eta \left(36 b \log (\cosh \eta) \left(245 (d-2 a b_1)+490 a_1 b+168 b^2 (14 \right. \right. \\
        &\qquad \left. -5 \log (\cosh \eta)) \log (\cosh \eta)-138 b^2\right)-3 b \left(-1960 a b_1+1960 a_1 b +6462 b^2 \right. \\
        &\qquad \left. +12985 d\right)+7 \sech^2\eta \left(-6860 a e+216 b^3 \log (\cosh \eta) (\log (\cosh\eta) (20 \log(\cosh \eta) \right. \\
        &\qquad \left. \left. -47)-1)+1818 b^3+5145 b d\right) + 48020 a e\right)-4536 b^3 \eta \tanh \eta \left(-4 \log (\cosh \eta) \right. \\
        &\qquad \left. \left. + \sech^2\eta (12 \log(\cosh \eta)-7)+4\right)\right) \frac{\sech^2\eta}{9604 (-a)^{5/2}}  \alpha_1^{3/2}
            + \mathcal{O}(\alpha_1^{7/4}).
    \end{aligned}
\end{equation*}
By expanding $\bar
x_{\epsilon^s(\alpha_1)}^s\left(t_{\epsilon^o(\alpha_1)}\left(\frac{b}{a}\frac{\eta}{\epsilon}\right)\right)$
and simplifying we obtain 
\begin{align*}
    \bar x_{\epsilon^s(\alpha_1)}^s\left(t_{\epsilon^o(\alpha_1)}\left(\frac{b}{a}\frac{\eta}{\epsilon}\right)\right) 
    &= \bar x_{\epsilon^o(\alpha_1)}^o\left(\frac{b}{a}\frac{\eta}{\epsilon} \right) +
        \begin{pmatrix}
            \frac{(3 b d - 4 a e) \sech^2(\eta) \tanh( \eta )}{\sqrt2 (-a)^{ 11/4 }} \alpha_1^{5/4} \\
            \frac{(3 b d - 4 a e)(\cosh (2 \eta )-2) \sech^4(\eta) }{2 (-a)^{5/2}} \alpha_1^{3/2}
        \end{pmatrix} \\
    &= \bar x_{\epsilon^o(\alpha_1)}^o\left(\frac{b}{a}\frac{\eta}{\epsilon} \right) +
        \begin{pmatrix}
            \frac{(3 b d - 4 a e) u_0'(\eta)}{24\sqrt2 (-a)^{ 11/4 }} \alpha_1^{5/4} \\
            \frac{(3 b d - 4 a e) u_0''(\eta)}{24 (-a)^{5/2}} \alpha_1^{3/2}
        \end{pmatrix},
\end{align*}
i.e., the predictors differ by a phase shift. In fact, by using the freedom in
the constants of integration in \cref{eq:u_int_s}, we can let 
\[
    t(\eta) \rightarrow t(\eta) + \theta_{1000} \frac{1}{b} \frac{2}{3}
    \left(\frac{4 a e}{b d}-3\right) \epsilon^2.
\] 
In this case, we will have equivalence between the predictors up to the desired
order. 

\begin{remark}
It is important to note here the phase condition used in the orbital
predictor isn't preserved under the transformation $H$. Therefore,
any improvements obtained in the approximation to the homoclinic solutions in a
normal form due to a different phase condition is, in general, not preserved
when lifting the approximations to the center manifold.
\end{remark}

\subsection{Implementation}%
\label{sec:implementation}
In this section, we first briefly review the method used in MatCont to continue
homoclinic solutions in autonomous ordinary differential equations 
\cref{eq:ODE} in two parameters as described in~\cite{DeWitte2012}. Then we
describe our implementation in MatCont of the algorithm to start the continuation of
the homoclinic solutions emanating from a codimension two Bogdanov-Takens point
using the derived above homoclinic predictors and parameter-dependent center manifold.

\subsubsection{Continuation of homoclinic solutions in MatCont}
\label{sec:cont_hom_matcont}
MatCont uses a correction-prediction continuation method applied to a defining
system~\cite{MatCont}. The defining system for the continuation of homoclinic
solutions in ordinary differential equations of the form \cref{eq:ODE} in two
parameters in MatCont are given by
\begin{equation}
\begin{aligned}
    \label{eq:definingSystem}
    \dot{x}(t)-2 T f(x(t), \alpha)=0,  \\
    f\left(s_{0}, \alpha\right)=0, \\
    \int_{0}^{1} \widetilde{x}(t)[x(t)-\widetilde{x}(t)] d t=0, \\
    Q^{U^{\perp}, \mathrm{T}}\left(x(0)-s_{0}\right)=0, \\
    Q^{S^{\perp}, \mathrm{T}}\left(x(1)-s_{0}\right)=0, \\
    T_{22 U} Y_{U}-Y_{U} T_{11 U}+T_{21 U}-Y_{U} T_{12 U} Y_{U}=0, \\
    T_{22 S} Y_{S}-Y_{S} T_{11 S}+T_{21 S}-Y_{S} T_{12 S} Y_{S}=0, \\
    \left\|x(0)-s_{0}\right\|-\epsilon_{0}=0, \\
    \left\|x(1)-s_{0}\right\|-\epsilon_{1}=0,
\end{aligned}
\end{equation}
see~\cite{DeWitte2012}.  Here the infinite time interval
$\interval[open]{-\infty}{ \infty}$ of the homoclinic orbit is truncated to a
finite interval $[-T,T]$, where $T>0$ is called the half-return time. The
truncated interval is rescaled to the interval $[0,1]$ and divided into
\mintinline{matlab}{ntst} mesh-intervals. Each mesh interval is further
subdivided by equidistant fine mesh points  where the solution is approximated
by a vector polynomial. Each mesh interval contains a number of
\mintinline{matlab}{ncol} collocation points where the first equation in
\cref{eq:definingSystem} must be satisfied. The second equation in
\cref{eq:definingSystem} locates the saddle point $s_0$ of the homoclinic
orbit. The last two equations in \cref{eq:definingSystem} define the distance
$\epsilon_0$ and  $\epsilon_1$ between the saddle and the homoclinic solution
at $t=0$ and $t=1$, respectively. The half-return time $T$, $\epsilon_0$,
$\epsilon_1$ are referred to as the \emph{homoclinic parameters}. Either one or
two of the homoclinic parameters must be allowed to vary. If two homoclinic
parameters are selected to vary, the third equation (the phase condition) in
\cref{eq:definingSystem} is added. The fourth and fifth equations in
\cref{eq:definingSystem} place the solution at the endpoints in the unstable
and stable eigenspace of linearization of the saddle point, respectively. The
matrices $Q^{U^{\perp}} \in \mathbb{R}^{n \times n_{S}}$ and $Q^{S^{\perp}} \in
\mathbb{R}^{n \times n_{U}}$ are not recalculated each continuation step, but
constructed from the lower dimensional matrices $Y_{U} \in \mathbb{R}^{n_{S}
\times n_{U}}$ and $Y_{S} \in \mathbb{R}^{n_{U} \times n_{S}}$. The fifth and
sixth equations in \cref{eq:definingSystem}, referred to as algebraic Ricatti
equations, keep track of the lower dimensional matrix $Y_U$ and $Y_S$,
see~\cite{Friedman2001Continuation}. The matrices $Y_U$ and $Y_S$ are initially
set to zero.

Thus, in order to start continuation of homoclinic solutions near a codimension
two Bogdanov-Takens point in \cref{eq:ODE}, one needs to provide an initial
approximation to
\begin{itemize}
    \item the discretized orbit on the rescaled and truncated interval $[0,1]$,
    \item the parameter values $\alpha$,
    \item the saddle point $s_0$,
    \item the half-return time $T$,
    \item the initial distances $\epsilon_0$ and $\epsilon_1$,
    \item and an initial tangent vector for the next prediction.
\end{itemize}

Since the homoclinic predictors depend on the coefficients of the normal form,
we first calculate the parameter-dependent center manifold transformation.

\subsubsection{Multilinear forms}
Unfortunately, not all multilinear forms in the expansion
\cref{eq:f_expansion} needed for the derivation of the coefficients for the
transformations $H$, $K$, and $\theta$, were previously implemented in MatCont.  We,
therefore, developed new scripts which generate the necessary multilinear
forms symbolically if the symbolic toolbox for MATLAB or for GNU Octave is
installed.  The multilinear forms can be generated with the graphical user
interface of MatCont, or via the command-line interface.  Examples are given in
the Supplementary Materials.  If the symbolic toolbox is not
available, finite differences in combination with polarization identities are
used instead.  Note that symbolical derivatives generated with either GNU
Octave or MatCont can be used interchangeably.

\subsubsection{Coefficients of the parameter-dependent center manifold}
Next, we compute the coefficients in the expansion $G, H, K$, and $\theta$ as
derived in \cref{sec:Center_manifold_reduction_ODE}. Since we allow
using finite differences, the results may become inaccurate. We, therefore,
provide a warning message if one or more of the systems are not satisfied with
a prescribed accuracy. The code for calculating the coefficients of the
orbital, smooth, and hyper normal form can be found in the scripts
\mintinline{julia}{BT_nmfm_orbital.m}, \mintinline{julia}{BT_nmfm.m}, and
\mintinline{julia}{BT_nmfm_without_e_b1.m}, respectively. These scripts can be
called independently once a Bogdanov-Takens point is located.

\subsubsection{The perturbation parameter and the half-return time}
To provide the data listed at the end of \cref{sec:cont_hom_matcont}, one first
needs to select a suitable perturbation parameter $\epsilon$ in the homoclinic
predictors in \cref{sec:homoclinic_asymptotics_n_dimension}.
In~\cite{Al-Hdaibat2016} a geometrically motived approach is used to determine
the perturbation parameter $\epsilon$ and half-return time $T$. The user first
provides the amplitude $A_0$ of the homoclinic orbit, which can be approximated
by
\begin{equation*}
    A_0 = \|\bar x(0,\epsilon)-s_0\|.
\end{equation*}
From the amplitude $A_0$ the initial perturbation parameter $\epsilon$ can then
be estimated by truncating the homoclinic predictors in
\cref{sec:homoclinic_asymptotics_n_dimension} up to second order in $\epsilon$.
Then, by using that the norm of the eigenvector $q_0$ is of unit length, see
\cref{eq:q0}, we obtain
 \[
     \epsilon^o  = |b|\sqrt{ \frac{A_0}{6|a|}},
     \quad \mbox{and} \quad 
     \epsilon^s  = \sqrt{ \frac{A_0|a|}{6}},
\] 
for the orbital and smooth homoclinic predictor, respectively.

Secondly, the user provides the distance $k$, also referred to as
\mintinline{matlab}{TTolerance} in MatCont, between the endpoints of the
truncated homoclinic orbit and the saddle point
\begin{equation}
    k = \|\bar x(\pm T,\epsilon)-s_0\|. \label{eq:TTolerance}
\end{equation}
From this equation the half-return time $T$ is solved by again truncating the
homoclinic predictors given in \cref{sec:homoclinic_asymptotics_n_dimension} up
to second order in $\epsilon$. We obtain
 \[
     \eta(T) = \left|\frac{b}{a} \frac{1}{\epsilon}
     \arcsech\left(\frac{|b|}{\epsilon} \sqrt{\frac{k}{6|a|}}\right)\right|,
     \quad \mbox{and} \quad 
     T  = \frac{1}{\epsilon} \arcsech\left(\sqrt{\frac{k}{A_0}}\right),
\] 
for the orbital and smooth homoclinic predictor, respectively. Thus, for the
orbital predictor, the half-return time $T$ has to be computed by numerical
inverting $\eta(T)$, either using \cref{eq:tOfTauRegular} or \cref{eq:tOfTau},
depending on the perturbation method used.

Note that by truncating the homoclinic orbit up to second order, only
the zeroth-order solution of $u$ is used. Furthermore, since only the
eigenvector $q_0$ is used in the approximation of the amplitude $A_0$ and the
half-return time $T$, one should not expect to obtain an accurate approximation
of the amplitude in general $n$-dimensional systems. For the half-return time
$T$, there is an additional problem in the approximation. Namely, the homoclinic
predictors are not even functions in $t$, i.e., we have the inequality
\begin{equation*}
    \|\bar x(T,\epsilon)-s_0\| \neq \|\bar x(-T,\epsilon)-s_0\|.
\end{equation*}
Thus, the more accurate interpretation of the amplitude $A_0$ and half-return
time $T$ is that they represent approximations for the amplitude and
half-return time of $u_0$ on the center subspace and not for the homoclinic
orbit on the center manifold. This, however, does not influence the convergence
of the initial prediction for the homoclinic orbit, as long as the derived
perturbation parameter $\epsilon$ is within the radius of convergence.

Instead of requiring the user to provide the amplitude $A_0$ and distance $k$,
we determine the perturbation parameter $\epsilon$ automatically.  Motivated by
\cref{sec:case_study_BT2} an initial good guess for the orbital homoclinic
predictor would be obtained with $\epsilon=0.1$. The higher-order terms, not
taken into account for the predictor, would have to behave very badly to not
lead to convergence. The distance $k$ we set to $k = \epsilon 10^{-4}$.  In
case that Newton does not converge, the perturbation $\epsilon$ is halved and
$k$ is updated.  This process is then repeated until convergence is obtained,
or by the maximum prescribed number of tries. In fact, for all buy one model in
\cref{sec:examples} and the Supplementary Materials, setting the perturbation
parameter $\epsilon=0.1$ and the distance $k=10^{-4}$ while using the orbital
predictor with either the Lindstedt-Poincar\'e or the regular perturbation
method lead to convergence to the true homoclinic solution.

\subsubsection{Homoclinic solution}
After the perturbation parameter $\epsilon$ and the half-return time $T$ have
been determined and using the coefficients of $H,K$, and possibly $\theta$, the
homoclinic approximation is obtained by evaluating one of the homoclinic
approximations given in \cref{sec:homoclinic_asymptotics_n_dimension} on the
fine mesh points $f_i$ for $i=0,\dots \text{ntst}\times\text{ncol}$ on the
rescaled discretized interval $[0,1]$.  Note that we do need to translate the
approximation to the Bogdanov-Takens phase point $x_0$ under consideration.

\subsubsection{Saddle point}
To obtain an approximation for the saddle point of the homoclinic orbit, we
simply let $t$ go to infinity in the homoclinic approximation obtained in the
previous step. Note that $\tau$ goes to infinity as $t$ does. Thus, using
\cref{eq:x_eta_espilon} we obtain
\begin{equation*}
\begin{aligned}
    s_0 ={}& x_0 + q_0 w_0^{\infty} + \bar H_{0010} \beta_1 + \bar H_{0001} \beta_2 +
            \frac{1}{2} \bar H_{2000} \left(w_0^{\infty}\right)^2  \\
           & + \bar H_{1010} w_0^{\infty} \beta_1 + \bar H_{1001} w_0^{\infty} \beta_2 +
             \frac{1}{2} \bar H_{0002} \beta_2^2 + \bar H_{0011} \beta_1 \beta_2 \\
           & + \frac{1}{6} \bar H_{3000} \left(w_0^{\infty}\right)^3 +
             \frac{1}{2} \bar H_{2001} \left(w_0^{\infty}\right)^2 \beta_2 
             + \frac{1}{6} \bar H_{0003} \beta_2^3 + \frac{1}{2} \bar H_{1002} w_0^{\infty} \beta_2^2
\end{aligned}
\end{equation*}
where
\[
  w_0^\infty = 2\frac{a}{b^2} \epsilon^2,
\] 
for the orbital predictor the approximation and
\[
    w_0^\infty = \frac1a \left( 2 - 2 \frac{5 a_1 b + 7 d}{7a^2} \epsilon^2
    \right) \epsilon^2,
\] 
for the smooth predictor.

\subsubsection{Tangent}
Once a good initial prediction for the homoclinic solution to
\cref{eq:definingSystem} has been obtained, a normalized tangent vector is
needed to start the continuation process. The simplest method to obtain the
normalized tangent vector is by calculating the one-dimensional null space of
the sparse rectangular Jacobian of the defining system \cref{eq:definingSystem}.
Although this is easy to implement numerically via a QR-decomposition of the
transpose, the drawback is that we do not have any control of the orientation
of the tangent vector. Thus, we do not know in which direction the continuation
starts.  Obviously, we want to avoid continuing the homoclinic curve towards
the Bogdanov-Takens point. One way to obtain the correct direction is by
obtaining two approximations with close, but different perturbation parameters,
from which the normalized tangent can be approximated.  Computationally cheaper
and more accurate is to first compute the null space $V$ of the Jacobian as
described above and subsequently inspect the sign of the $\alpha_1$ component
in the vector $V$. This sign should be equal to the sign of the derivative of
the first component of $\bar\alpha$ with respect to $\epsilon$, see
\cref{eq:alpha_espilon}. A simple calculation yields
\[
\bar\alpha'(\epsilon)  = K_{10} \beta_1'(\epsilon) + K_{01} \beta_2'(\epsilon) +
     K_{02} \beta_2 \beta_2'(\epsilon) + K_{11} (\beta_1'(\epsilon) \beta_2 +
     \beta_1 \beta_2'(\epsilon)) + \frac{1}{2} K_{03}\beta_2^2 \beta_2'(\epsilon),
\] 
where $\beta_1'(\epsilon)$ and $\beta_2'(\epsilon)$ are given by
\[
\left\{
\begin{aligned}
    \beta_1'(\epsilon) ={}& -16 \frac{a^3}{b^4} \epsilon^3, \\
    \beta_2'(\epsilon) ={}& \frac{a}{b} (2 \tau_0 + 4 \tau_2 \epsilon^2) \epsilon, \\
\end{aligned}
\right.
\] 
with $\tau_{0,1}$ are given in \cref{eq:tau_orbital} for the orbital
predictor, and by
\[
\left\{
\begin{aligned}
    \beta_1'(\epsilon) ={}&  -\frac{16}{a} \epsilon^3, \\
    \beta_2'(\epsilon) ={}& \frac{b}{a} (2 \tau_0 + 4 \tau_2 \epsilon^2) \epsilon, \\
\end{aligned}
\right.
\] 
where $\tau_{0,1}$ are given in \cref{eq:tau_smooth} for the smooth homoclinic
predictor. Thus, we simply change the sign of the vector $V$ if the product of
the $\alpha_1$ component of $V$ with the first component of $\bar\alpha'(\epsilon)$
is negative, otherwise, we leave the vector $V$ unchanged.

\section{Examples}
\label{sec:examples}

In this section, we will compare the different methods at different orders to
approximate the homoclinic solution in \cref{eq:ODE} near a generic codimension two
Bogdanov-Takens bifurcation point. In the first example, we consider the
topological normal form \cref{eq:universal_unfolding}. By using convergence plots
we will show that by consi\-de\-ring different phase conditions in both the regular
perturbation method and the Lindstedt-Poincar\'e method influences the accuracy
of the approximation, see \cref{fig:RP_vs_RPL2} and \cref{fig:LPM_vs_LPM},
respectively. In this example, we will also compare the regular perturbation
method against the Lindstedt-Poincar\'e method with a higher-order
approximation of the non-linear time transformation as derived
in this paper, or without, as done in~\cite{Al-Hdaibat2016}. We do this twofold:
using convergence plots, see \cref{fig:RP_vs_LP2016_vs_LP}, and by inspecting
the predicted homoclinic profiles with the Newton corrected homoclinic
profiles, see \cref{fig:RP_vs_LP2016_vs_LP_profiles}.

Next, we will consider two four-dimensional models in which generic codimension
two Bogdanov-Takens bifurcations are present. Here we will show that the
approximation order of the homoclinic asymptotic lifts correctly to the
parameter-dependent center manifold, see
\cref{fig:HodgkinHuxleyConvergencePlot,fig:Lorenz84ConvergencePlot}. In the
second example, we also compare the predicted with the Newton corrected
homoclinic orbits in a projection onto a three-dimensional slice of the full
system. It is shown that with an amplitude of 0.1 the approximation is still
very accurate. This should be compared with~\cite[section 6.2]{Al-Hdaibat2016}
in which the amplitude needed to be set to $4\times 10^{-5}$ to obtain
convergence.

Here we will not demonstrate how to actually start the continuation of the
homoclinic orbits with MatCont. For this we refer to the 
\href{https://mmbosschaert.github.io/MatCont7p2NewInitBTHom-/}{online Jupyter Notebook}.
In the Jupyter Book, a total of nine different models are considered
demonstrating in detail how to start continuation from either an explicitly
derived, or encountered during continuation, codimension two Bogdanov-Takens
bifurcation point. Each model is treated in a separate Jupyter Notebook, which
can be executed to reproduce the results obtained. We do like to note that in
all cases the curve of homoclinic solutions could be started with the default
settings without the need to adjust any parameters, see
\cref{sec:implementation}. This shows that the asymptotics obtained in this
paper are very robust.

\subsection{Topological normal form}
\label{sec:topological_normal_form}

\begin{figure}
    \centering
    \ifcompileimages%
  \tikzsetnextfilename{BTParameterdependentnormalformConvergencePlotRPMvsRPM}%
  \input{tikz/BTParameterdependentnormalformConvergencePlotRPMvsRPM}%

    \else
        \includegraphics{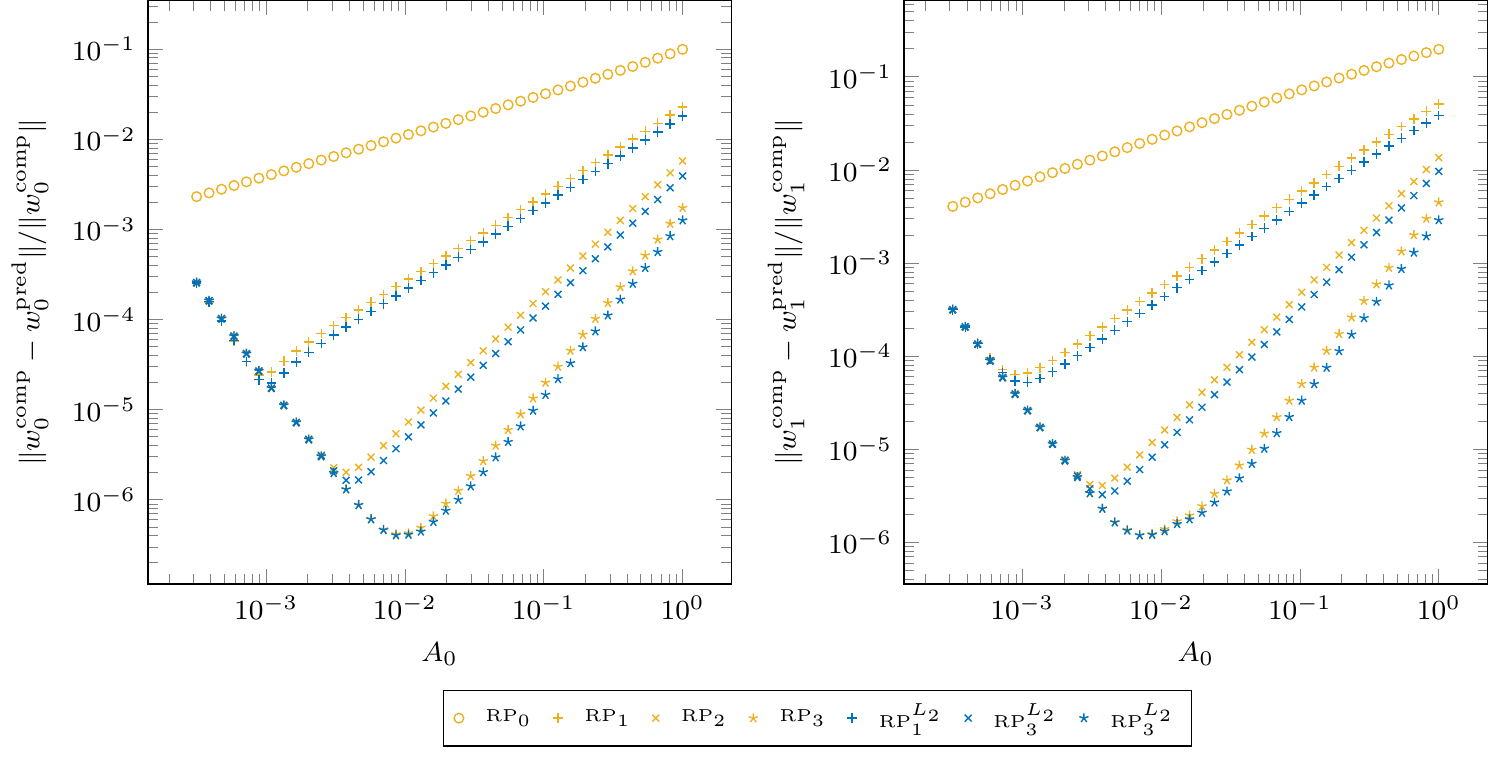}
    \fi
    \caption{Log-log convergence plot comparing the different phase conditions
        when using the regular perturbation method (RP) for approximation the
        homoclinic solutions. The subscript in RP$_i$, $0\leq i \leq 3$, refers
        to the approximation order. The superscript $L_2$ refers to the phase
        condition \cref{eq:u_i_L2_phase_condition}. } 
    \label{fig:RP_vs_RPL2}
\end{figure}

In this example we compare five different methods to approximate the homoclinic
solution present in the universal unfolding \cref{eq:universal_unfolding}: 
\begin{itemize}
    \item the regular perturbation method, 
    \item the regular perturbation method with $L_2$ phase condition,
    \item the Lindstedt-Poincar\'e method without a higher-order time
        approximation as in~\cite{Al-Hdaibat2016},
    \item the Lindstedt-Poincar\'e method with a higher-order time
        approximation as derived here, 
    \item and the Lindstedt-Poincar\'e method with a different phase condition.
\end{itemize} 

In \cref{fig:RP_vs_RPL2} a log-log convergence plot is shown comparing the
asymptotics derived in~\cite{Al-Hdaibat2016} using the regular perturbation
method with phase condition $\dot u = 0$ against the asymptotics derived here
with the phase condition given in \cref{eq:u_i_L2_phase_condition}.  On the
abscissa is the amplitude $A_0$ and on the ordinate is the relative error
$\delta$ between the components $w_0$ and $w_1$ of the predicted solution and
the Newton corrected solution. We see that the $L_2$ phase condition is
slightly, but noticeably, more accurate at each order, confirming the geometric
intuition.

Next, we compare the regular perturbation method with the Lindstedt-Poincar\'e
method to approximate the homoclinic solution in log-log plot in
\cref{fig:RP_vs_LP2016_vs_LP}.  It is seen that the first order regular
perturbation method slightly outperforms the Lindstedt-Poincar\'e method, while
for the second and third-order the Lindstedt-Poincar\'e method  are clearly
better approximations than the regular perturbation method.  The third-order
approximation by the Lindstedt-Poincar\'e method without including a
higher-order approximation of the non-linear time transformation results in the
same order of accuracy as the zeroth-order regular perturbation method.

\begin{figure}
    \centering
    \ifcompileimages%
  \tikzsetnextfilename{BTParameterdependentnormalformConvergencePlot}%
  \input{tikz/BTParameterdependentnormalformConvergencePlot}%

    \else
        \includegraphics{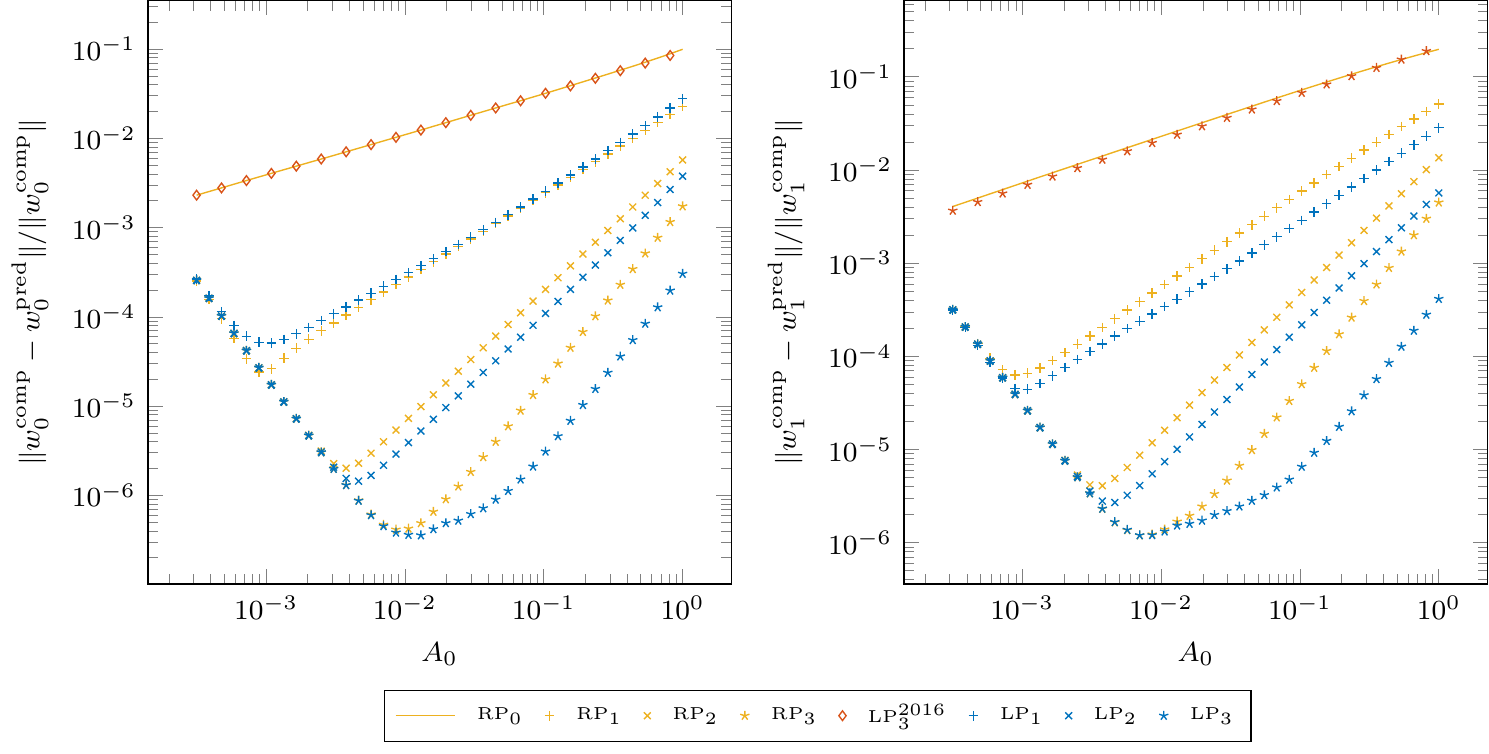}
    \fi
    \caption{Log-log convergence plot comparing the relative errors of the computed
        homoclinic $w_0$ and $w_1$ component with the predicted solution in the
        topological normal form using four different methods: Regular
        Perturbation ($RP$, yellow), Lindstedt-Poincar\'e without higher-order time
        approximation ($LP_3^{2016}$, red), and Lindstedt-Poincar\'e combined
        with higher-order time approximation ($LP$, blue).}
    \label{fig:RP_vs_LP2016_vs_LP}
\end{figure}

It is thus essential to include a higher-order approximation of the non-linear
time transformation. To make it even more clear we plotted the profiles of the
third-order approximations using the Lindstedt-Poincar\'e method as
in~\cite{Al-Hdaibat2016}, the regular perturbation method, and the
Lindstedt-Poincar\'e, together with the Newton corrected solutions in
\cref{fig:RP_vs_LP2016_vs_LP_profiles}. We see that the Lindstedt-Poincar\'e
method as in~\cite{Al-Hdaibat2016} approximates the solution rather poorly,
whereas the approximation derived in
\cref{sec:third_order_homoclinic_approximation_LP} is very accurate. Note that
when plotting these homoclinic approximations and corrections in $(w_0,w_1)$
phase-space, this difference is not visible at all. This explains why this has been
unnoticed in~\cite{Al-Hdaibat2016}.

\begin{figure}[!ht]
    \centering
    \ifcompileimages%
  \tikzsetnextfilename{BTParameterdependentnormalformCompareProfiles}%
  \input{tikz/BTParameterdependentnormalformCompareProfiles}%

    \else
        \includegraphics{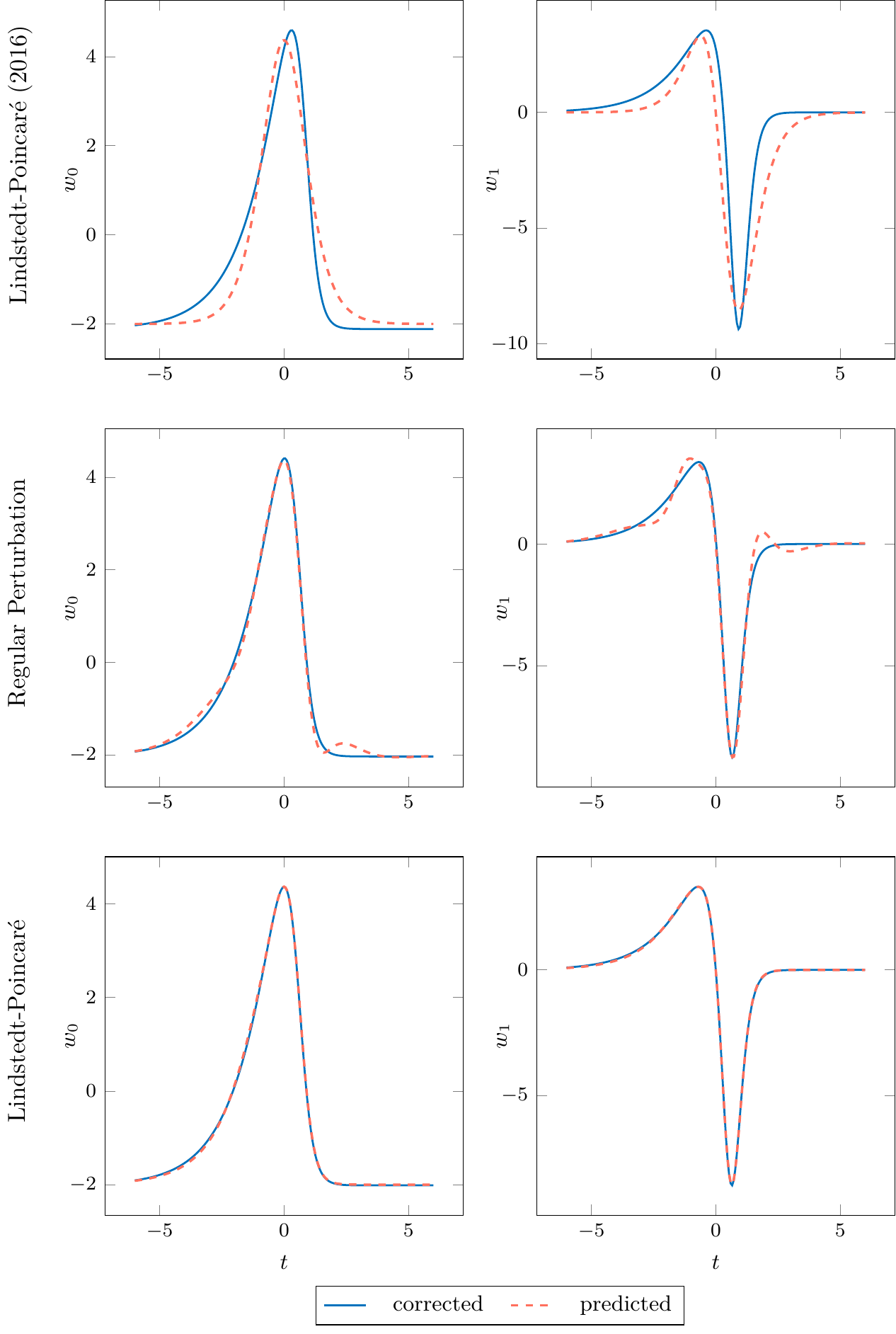}
    \fi
    \caption{Comparison of the profiles of the predicted and corrected
        homoclinic orbit for \cref{eq:universal_unfolding} using different
        approximation methods, see
    \cref{sec:topological_normal_form} for a full description.}
    \label{fig:RP_vs_LP2016_vs_LP_profiles}
\end{figure}

Lastly, in \cref{fig:LPM_vs_LPM}, we compare the two different phase conditions
when using the Lindstedt-Poincar\'e method in a log-log plot. It is clearly
seen that the phase condition used in \cref{sec:phase_condition} improves,
rather significantly, the accuracy of the third-order predictor. However, in
contrast with the different phase conditions used in the regular perturbation
method, we do not have any geometric (or analytical) explanation for this
improvement.

\begin{figure}
    \centering
    \ifcompileimages%
  \tikzsetnextfilename{BTParameterdependentnormalformConvergencePlotLPMvsLPM}%
  \input{tikz/BTParameterdependentnormalformConvergencePlotLPMvsLPM}%

    \else
        \includegraphics{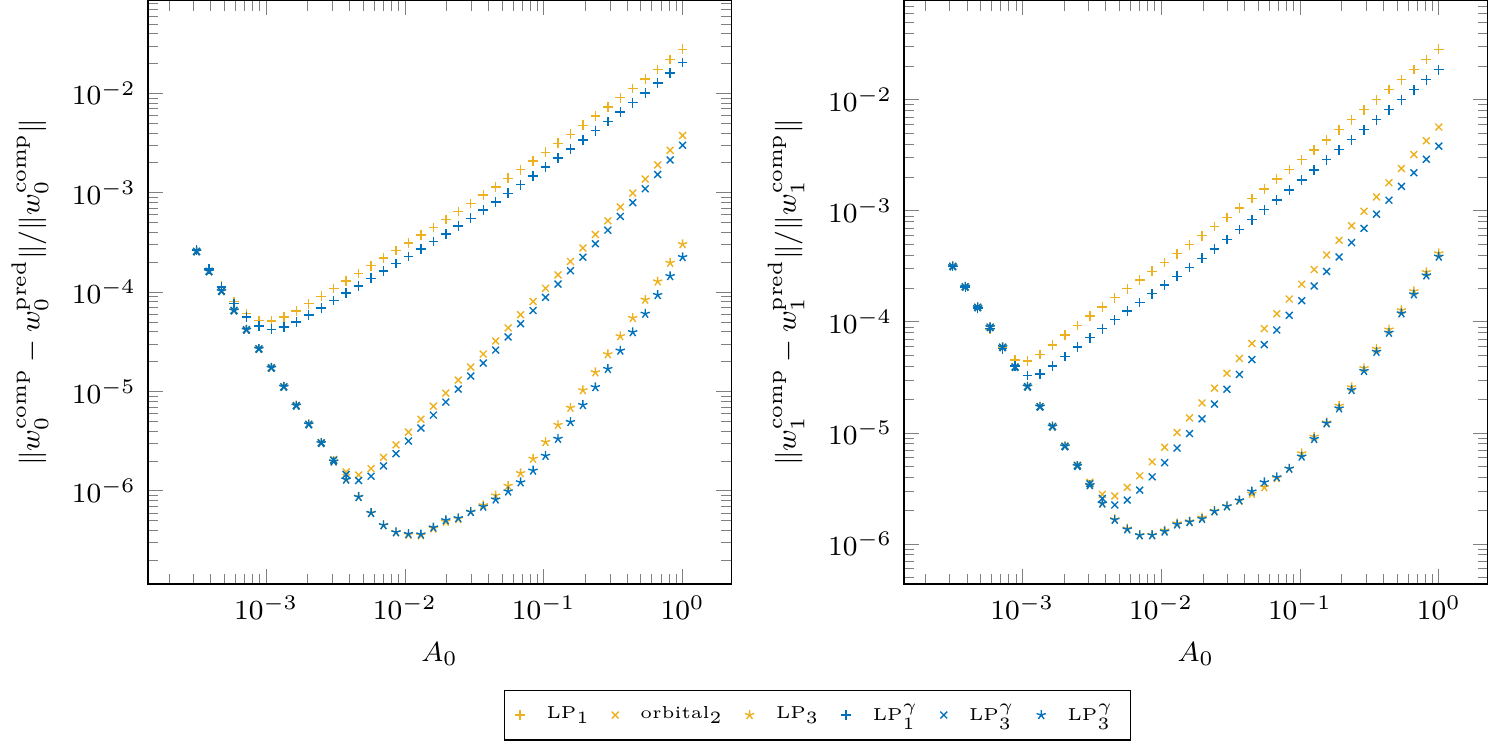}
    \fi
    \label{fig:LPM_vs_LPM}
    \caption{Log-log convergence plot comparing the relative errors of the computed
        homoclinic $w_0$ and $w_1$ component with the predicted solution in the
        topological normal form using Lindstedt-Poincar\'e with two different
        phase-condition, see \cref{sec:phase_condition}.}
\end{figure}
Notice that here we do not compare the homoclinic predictors derived with
different normal forms. Indeed, when considering the universal unfolding
\cref{eq:universal_unfolding} the normal forms coincide, resulting in identical
predictors.

\subsection{Hodgking-Huxley equations}

The Hodgkin-Huxley equations~\cite{HodgkinHuxley1952} relate the difference in electric
potential across the cell membrane $V$ and gating variables $m, n$ and $h$
for ion channels to the stimulus intensity $I$ and temperature $T$, as
follows:
\begin{equation}
\label{eq:HodgkinHuxleyEquations}
\begin{cases}
\begin{aligned}
    \dot{V} &= -G(V, m, n, h)+I, \\
    \dot{m} &= \Phi(T)\left[(1-m) \alpha_{m}(V)-m \beta_{m}(V)\right], \\
    \dot{n} &= \Phi(T)\left[(1-n) \alpha_{n}(V)-n \beta_{n}(V)\right], \\
    \dot{h} &= \Phi(T)\left[(1-h) \alpha_{h}(V)-h \beta_{h}(V)\right],
\end{aligned}
\end{cases}
\end{equation}
where 
\begin{align*}
    \Phi(T) & = 3^{({T}-6.3) / 10}, \\
    G(V, m, n, h) & =\bar{g}_{\mathrm{Na}} m^{3}
    h\left(V-\bar{V}_{\mathrm{Na}}\right)+\bar{g}_{\mathrm{K}}
    n^{4}\left(V-\bar{V}_{\mathrm{K}}\right)+\bar{g}_{\mathrm{L}}\left(V-\bar{V}_{\mathrm{L}}\right).
\end{align*}
The equations modeling the variation of membrane permeability are:
\begin{align*}
    \alpha_{m}(V) =& \Psi\left(\frac{V+25}{10}\right), & \beta_{m}(V) &= 4 e^{V / 18}, \\
    \alpha_{n}(V) =& 0.1 \Psi\left(\frac{V+10}{10}\right), & \beta_{n}(V) &= 0.125 e^{V / 80}, \\
    \alpha_{h}(V) =& 0.07 e^{V / 20}, & \beta_{h}(V) &= \left(1+e^{(V+30) / 10}\right)^{-1},
\end{align*} with
\begin{equation*}
    \Psi(x) = \begin{cases}
        x /\left(e^{x}-1\right), & \text { if } x \neq 0, \\
        1, & \text { if } x=0.
    \end{cases}
\end{equation*}
The parameters $\bar{g}_{\text{ion}}$ and $\bar{V}_{\text{ion}}$ representing
maximum conductance and equilibrium potential for the ion were obtained from
experimental data by Hodgkin and Huxley, with the values given below:
\[
\begin{array}{lll}
\bar{g}_{\mathrm{Na}}=120 \mathrm{mS} / \mathrm{cm}^{2}, 
& \bar{g}_{\mathrm{K}}=36 \mathrm{mS} / \mathrm{cm}^{2}, 
& \bar{g}_{\mathrm{L}}=0.3 \mathrm{mS} / \mathrm{cm}^{2}, \\
\bar{V}_{\mathrm{Na}}=-115 \mathrm{mV},
& \bar{V}_{\mathrm{K}}=12 \mathrm{mV}, 
& \bar{V}_{\mathrm{L}}=10.599 \mathrm{mV}.
\end{array}
\]
The values of $\bar{V}_{\mathrm{Na}}$ and $\bar{V}_{\mathrm{K}}$ can be
controlled experimentally~\cite{HodgkinHuxley1952a,Jack1975ElectricCurrentFlow}.
The temperature is set to $T=6.3^{\circ}$.

It is easy to see that the equilibria of \cref{eq:HodgkinHuxleyEquations} can be
parametrized by $V$
\begin{equation*}
    \begin{aligned}
        I(V) &= G(V, m(V), n(V), h(V)) \\
        y(V) &= \alpha_y(V)/(\alpha_y(V)+\beta_y(V)),
    \end{aligned}
\end{equation*}
where $y\in\{m,n,h\}$, see also~\cite{Guckenheimer@1993}. By calculating the Jacobian $A$ of
\cref{eq:HodgkinHuxleyEquations} at the equilibrium, we can derive the
characteristic polynomial $\rho_A(\lambda)$. The equation $\rho_A(0)=0$ can be
solved analytically for $\bar V_K$. Using this solution for $\bar V_k$ and
plotting the curve $\rho'(0)$ reveals two potential candidates for Bogdanov-Takens
points. Inspecting the geometric multiplicity of these two points narrows the
possibilities down to the point
\begin{equation}
\label{eq:HodgkinHuxleyBTpoint}
\begin{pmatrix}
    V \\m \\n \\h \\ \bar V_k \\ I 
\end{pmatrix}
\approx
\begin{pmatrix}
-2.835463618170097 \\ 0.07351498630356315 \\ 0.361877602925177 \\ 0.494859128785482 \\
-4.977020454108788 \\ -0.06185214966177632
\end{pmatrix}.
\end{equation}  
Inspecting the coefficients of the normal form shows that
\[
a = 2.5515\cdot 10^{-5}, \qquad b =  -0.0075.
\] 
Thus, provided the transversality conditions are satisfied, we can use MatCont to
start continuation of the homoclinic orbits emanating from this point.

\begin{figure}
    \centering
    \ifcompileimages
        \subfloat[]{%
  \tikzsetnextfilename{HodgkinHuxleyConvergencePlotLP}%
  \input{tikz/HodgkinHuxleyConvergencePlotLP}%
}
        \hfill
        \subfloat[]{%
  \tikzsetnextfilename{HodgkinHuxleyConvergencePlot}%
  \input{tikz/HodgkinHuxleyConvergencePlot}%
}
    \else
        \subfloat[]{\includegraphics{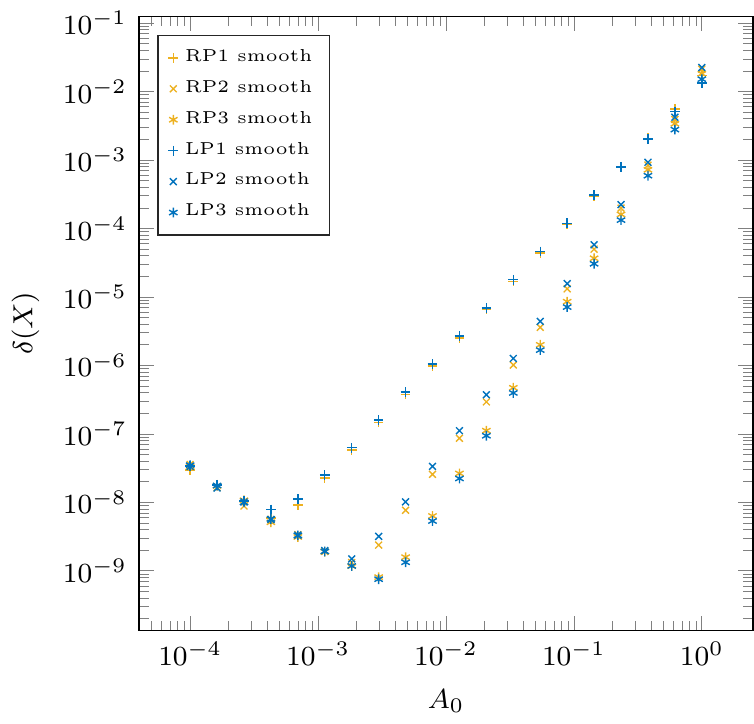}}
        \hfill
        \subfloat[]{\includegraphics{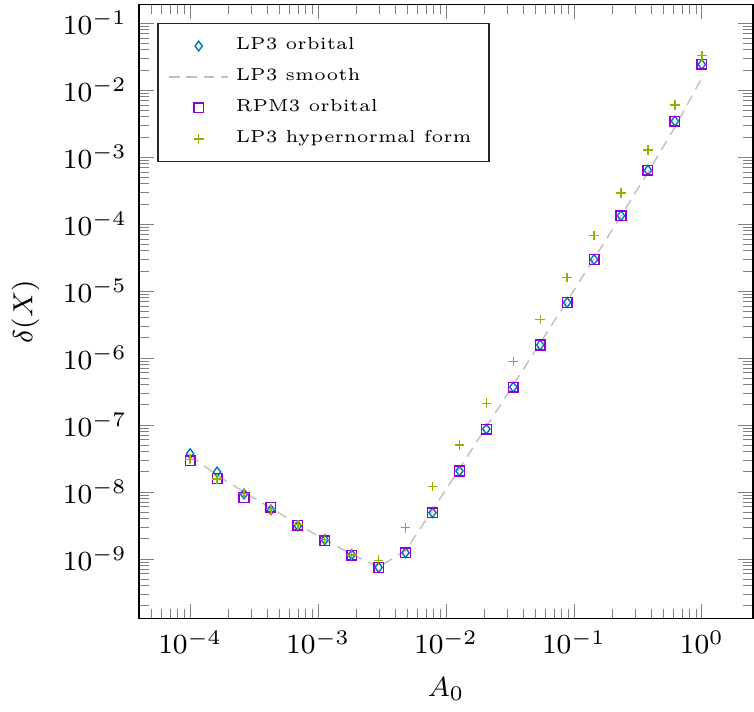}}
    \fi
    \caption{Convergence plot for the homoclinic predictors near the
    Bogdanov-Takens bifurcation \cref{eq:HodgkinHuxleyBTpoint} in the
    Hodgkin-Huxley equations \cref{eq:HodgkinHuxleyEquations}.}
    \label{fig:HodgkinHuxleyConvergencePlot}
\end{figure}

In \cref{fig:HodgkinHuxleyConvergencePlot} there are two log-log convergence
plots shown. Note that in this and the next example, we show the relative error
$\delta(X)$ between the predicted and corrected Newton solution to the defining
system \cref{eq:definingSystem}.  In the left plot (a) we compare the regular
perturbation method with the Lindstedt-Poincar\'e method. We see that compared
with the previous example the Lindstedt-Poincare\'e method is slightly less
accurate than the regular perturbation method and the second order.
Nevertheless, we clearly see that the order of convergence lifts from the
normal form to the two-dimensional center manifold in $\mathbb R^4$. In the
plot right (b) we compare four different third approximations to the homoclinic
orbit
\begin{itemize}
    \item the Lindstedt-Poincar\'e method using the smooth orbital normal form
        (the blue diamond),
    \item the Lindstedt-Poincar\'e method using the smooth normal form
        (the dashed light gray line),
    \item the regular perturbation method using the smooth normal form
        (the pink square), 
    \item the Lindstedt-Poincar\'e method using the hyper-normal form
        (the green plus).
\end{itemize}

We see that both the Lindstedt-Poincar\'e method and the regular perturbation
method using the smooth orbital normal form are in perfect agreement with the
Lindstedt-Poincar\'e method using the smooth normal form. Only the homoclinic
predictor using the hyper-normal form is slightly less accurate.

\subsection{Homoclinic RG flows}
In~\cite{Jepsen@2021}  an $\mathcal N = 1$ supersymmetric model of interacting
scalar superfields $\Phi_{ab}^i$ that is invariant under the action of an $O(N)
\times O(M)$ group in $d = 3 - \epsilon$ dimensions is considered.
The coupling constants $g_i(i=1,\dots,4)$ satisfy the following differential
equations
\begin{equation}
    \label{eq:HomoclinicRGflows} 
    \dot g = -\epsilon g + \beta^{(2)}(g,M,N) + \mathcal O(g^5),  \qquad g\in\mathbb R^4,
\end{equation} 
where the two-loop contributions $\beta_i^{(2)}(i=1,\dots,4)$ are cubic in the
coupling and the parameter $\epsilon$ is scaled to $1$.  The exact expression for
$\beta_i^{(2)}$ are quite long can be found in~\cite[Appendix B]{Jepsen@2021}
or in the Supplementary Materials. 

In~\cite{Jepsen@2021} a Bogdanov-Takens point near the parameter values
$M=0.2945$ and $N = 4.036$ is located. Using these parameter values we locate an
equilibrium at 
\[
\begin{pmatrix}
    g_1 \\ g_2 \\ g_3 \\ g_4
\end{pmatrix} = 
\begin{pmatrix}
    0.0701457361241472 \\ -0.06520883770451065 \\ 0.001823543197553845 \\ 0.22874527306411319
\end{pmatrix}.
\]
By continuing the equilibrium in the parameter $M$ we detect several limit
points and two Hopf points. We continue the second Hopf point at $M\approx0.2958$ 
in parameters $M$ and $N$. Several Bogdanov-Takens points are detected. The
first Bogdanov-Takens point is located at
\[
\begin{pmatrix}
    g_1 \\ g_2 \\ g_3 \\ g_4 \\ 
\end{pmatrix} = 
\begin{pmatrix}
    -0.715157316845187 \\ -0.250968103603174 \\ 0.510051114588271 \\ -0.391935453715783 \\
\end{pmatrix},
\]
with parameter values
\[
    (M, N) = (0.294477255737036, 4.035536108506390).
\]

In \cref{fig:Lorenz84ConvergencePlot} we have created similar log-log
convergence plots as in the previous example. The plots look very alike, only
the homoclinic predictor using the hyper-normal form is in this model slightly
more accurate than the other homoclinic approximations.

Lastly, in \cref{fig:Lorenz84PhasePhase}, there are two additional plots. The
left plot (a) compares the predicted (dashed, red) with the corrected (solid,
blue) homoclinic orbits using the Lindstedt-Poincar\'e method with the smooth
orbital normal form for amplitudes $A_0 = 10^{-3}$ to $A_0=0.1$. We see that
they are in excellent agreement. In the right plot (b) we compared the computed
homoclinic bifurcation curve (solid, blue) with the predicted values in
parameter-space $(M, N)$. Most important to notice here is that the
predictor given in \cite{Al-Hdaibat2016} (the yellow crosses)
is less accurate than the second order predictor (blue plus signs) obtained in
this paper.

\begin{figure}[ht]
    \centering
    \ifcompileimages%
        \subfloat[]{%
  \tikzsetnextfilename{HomoclinicRGFlowsConvergencePlotRPvsLP}%
  \input{tikz/HomoclinicRGFlowsConvergencePlotRPvsLP}%
} \hfill
        \subfloat[]{%
  \tikzsetnextfilename{HomoclinicRGFlowsConvergencePlotAll}%
  \input{tikz/HomoclinicRGFlowsConvergencePlotAll}%
}
    \else
        \includegraphics{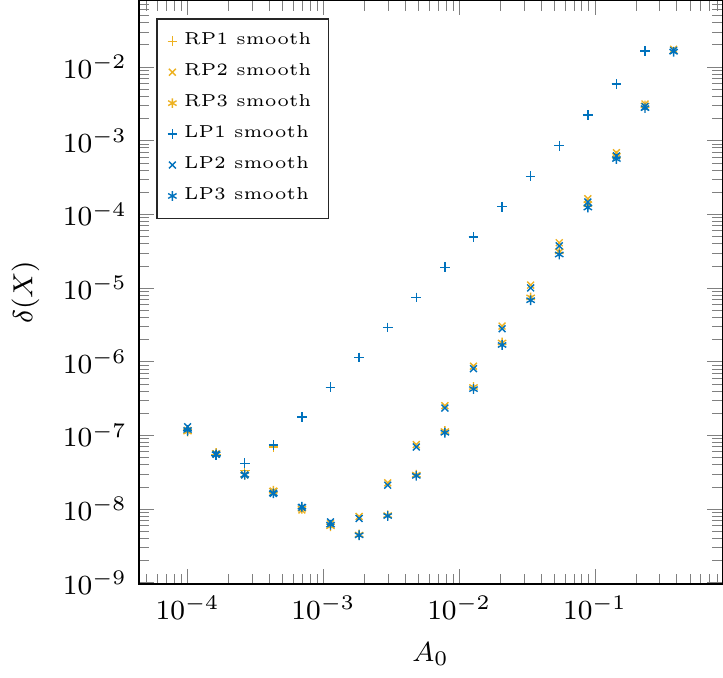} \hfill
        \includegraphics{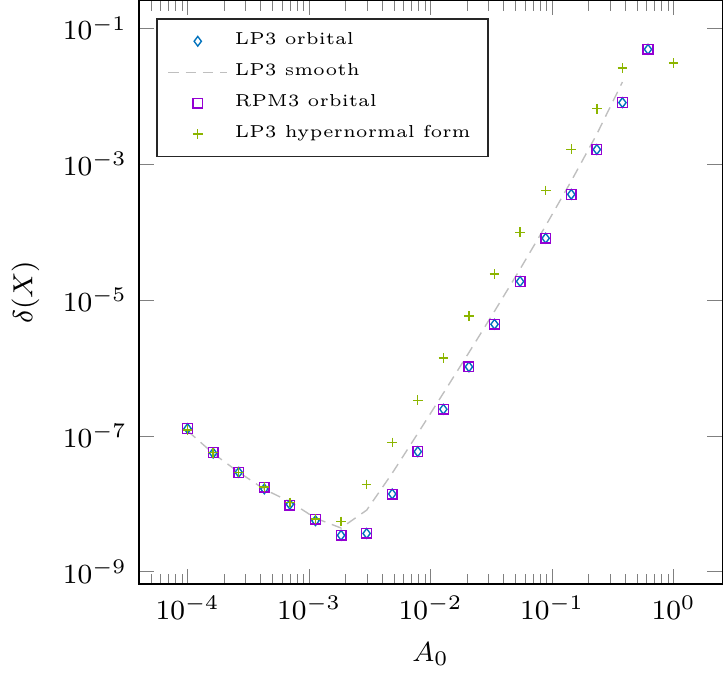}
    \fi
    \caption{Convergence plot for the homoclinic predictors near one of the
        two Bogdanov-Takens bifurcation in the Homoclinic RG flows model 
        \cref{eq:HomoclinicRGflows}.}
    \label{fig:Lorenz84ConvergencePlot}
\end{figure}
\begin{figure}[ht]
    \centering
    \ifcompileimages%
        \subfloat[]{%
  \tikzsetnextfilename{HomoclinicRGflowsCompareOrbits3D_LP}%
  \input{tikz/HomoclinicRGflowsCompareOrbits3D_LP}%
} \hfill
        \subfloat[]{%
  \tikzsetnextfilename{HomoclinicRGflowsCompareParameters}%
  \input{tikz/HomoclinicRGflowsCompareParameters}%
}
    \else
        \includegraphics{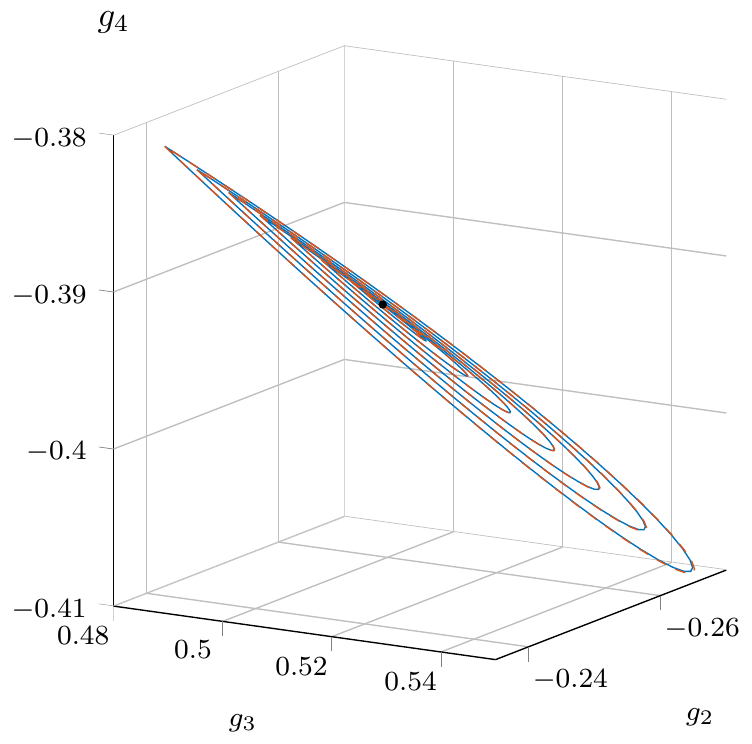} \hfill
        \includegraphics{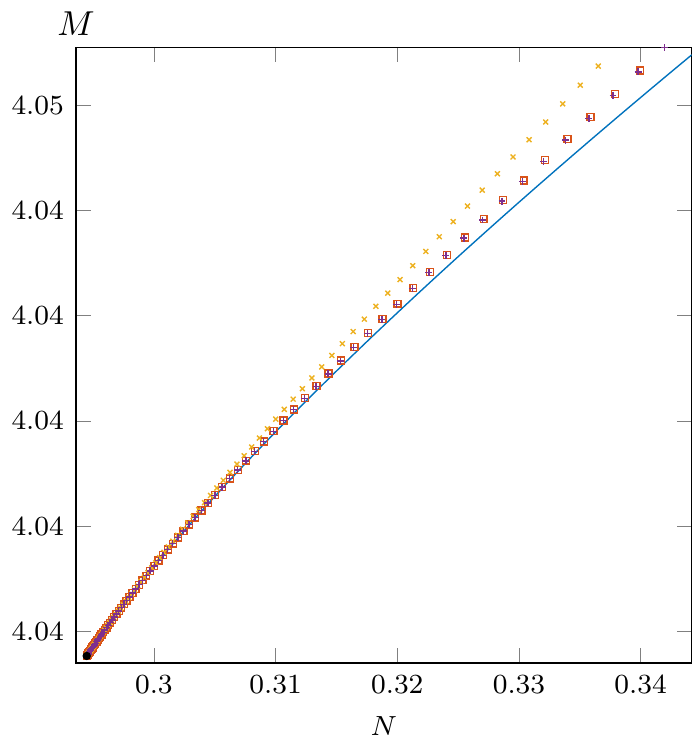}
    \fi
    \caption{Plot (a) compares the predicted (dashed, red) with the corrected
        (solid, blue) homoclinic orbits using the Lindstedt-Poincar\'e method
        with the smooth orbital normal form for amplitudes $A_0 = 10^{-3}$ to
        $A_0=1$. Plot (b) compares the computed homoclinic bifurcation curve
        (solid, blue) with the predicted values in parameter-space $(M,N)$.
        The yellow crosses are the second order predictor obtained with the
        transformation as given in \emph{\cite{Al-Hdaibat2016}}. The blue plus signs
        are the second order predictor obtained in this paper. Lastly, the red
        boxes are the third-order predictor obtained in this paper.}
    \label{fig:Lorenz84PhasePhase}
\end{figure}

\section{Discussion}%
We have derived third-order predictors for the homoclinic curve emanating from
the generic codimension two Bogdanov-Takens bifurcation in general
$n$-dimensional autonomous ODEs. By considering the smooth orbital normal form
\cref{eq:normal_form_orbital} and incorporating the time-reparametrization in the
homological equation \cref{eq:homological_equation} we were able to derive
the third-order asymptotic of the homoclinic curve independent of any
coefficients. However, for this simplification, there is a price to pay.
Firstly, the systems to be solved to obtain the coefficients for the
parameter-dependent center manifold becomes more difficult, see
\cref{subsec:center_manifold_tranformation_orbital}. Ideally, there should be an
automatic algorithm in line with~\cite{Murdock@2003}. However, to the best of
our knowledge, such algorithms do not exist yet.  Secondly, the translation of
time in the homological equation needs to be inverted numerically. This,
however, can be done relatively cheap and is very accurate as shown by the
examples.

We have explained how to obtain the correct transformation to the
parameter-dependent center manifold by carefully inspecting which terms are
in, \emph{and are not in}, the normal form that affects the homoclinic
asymptotic up to certain order. The comparison in
\cref{sec:comparison_homoclinic_predictors} and the examples in
\cref{sec:examples} together with the examples in the supplementary materials
show that we indeed have obtained the correct transformation.

The additional non-linear transformation
\cref{eq:second_order_nonlinear_oscillator} greatly simplifies the computation
of the coefficients in the Lindstedt-Poincar\'e method since all calculations
become essentially polynomial, which is ideal for computers to work with.
Nonetheless, there the algorithm complexity grows exponentially as the order
increases linearly. Also, the radius of convergence is clearly finite as shown
in \cref{sec:case_study_BT2}. One way to increase the convergence radius is by
using transformations as in~\cite{Milton@1974}. However, we didn't include any
results in this direction since it would distract too much from our main
objectives.

Using different phase conditions can improve the accuracy of the homoclinic
approximation. However, this only holds true when applied directly to the system
considered. Indeed, the phase condition isn't invariant under the
parameter-dependent center manifold transformation. Thus, its applicability is
very limited. Furthermore, using a different phase condition may, somewhat
unexpectedly, result in difficult integrals to be solved, see
\cref{sec:RPM_norm_minimizing_phase condition}.

The higher-order approximation to the non-linear time transformation in the
Lindstedt-Poincar\'e method turns out to be essential to obtain higher-order
approximations to the homoclinic solutions. This is clearly seen by inspecting
the profiles of the homoclinic solution in
\cref{fig:RP_vs_LP2016_vs_LP_profiles} and in the convergence plot in
\cref{fig:RP_vs_LP2016_vs_LP}. Without the higher-order approximation the same
convergence order as the unperturbed Hamiltonian solution, i.e., the
zeroth-order solution. It should be noted that the higher-order approximation
of the non-linear time transformation is more difficult to obtain. Therefore,
we conclude that there seem to be \emph{no benefits} of the
Lindstedt-Poincar\'e method over the regular perturbation method for starting
continuation of homoclinic orbits. Indeed, the numerical comparisons in
\cref{sec:examples} show similar accuracy of convergence at each order.

By comparing the convergence order of the regular perturbation method with the
Lindstedt-Poincar\'e method we see that contrary to what one might expect, the
regular perturbation method may result in better accuracy at the same order. A
possible explanation for this might be that although the Lindstedt-Poincar\'e
method provides a uniform approximation in time along the homoclinic orbit of
the numerical solution is truncated to a finite interval in which the
`parasitic turn' doesn't give a significant contribution. After all it then
simply depends on the higher-order non-linear terms in the system which favor
one method over the other. 

\appendix
\section{Explicit example demonstrating incorrect predictor}
\label{app:incorrect_predictor}

Although the second-order homoclinic approximation derived
in~\cite{Al-Hdaibat2016} for the smooth normal form \cref{eq:BT_smooth_nf} is
correct, the parameter and center manifold transformation are incorrect. To see
this, we suppose \cref{eq:ODE} is given by
\begin{align}
\label{eq:btnormalform_with_alpha2^3}
\dot x = f(x,\alpha) = \begin{pmatrix}   x_1 \\
 \alpha_1 + \alpha_2 x_1 + x_0^2 + x_0 x_1 + c_1 \alpha_2^3
 \end{pmatrix},
\end{align}
for some arbitrary nonzero constant $c_1 \in \mathbb R$.  We will now compare
two different methods for obtaining a second-order approximation to the
homoclinic solution in \cref{eq:btnormalform_with_alpha2^3}. To keep the
exposition as clear as possible, we focus solely on the parameters. For the
first method we directly apply the singular rescaling 
\[
\alpha_1=-4\epsilon^4, \quad \alpha_2 = \eta \epsilon^2,
\quad x_0= \epsilon^2, \quad x_1= \epsilon^3, \quad
s=\epsilon t,
\]
to \cref{eq:btnormalform_with_alpha2^3}. This yields the system
\[
\begin{cases}
\begin{aligned}
  \dot u ={}& v, \\
  \dot v ={}& -4 + u^2 + v\left( u + \tau \right)\epsilon +  c_1 \tau^3
	\epsilon^2, \\
\end{aligned}
\end{cases}
\]
where the dot $\dot{}$ now represents the derivative with respect to $s$.
Then, using the generalized Lindstedt-Poincar\'e method we obtain the approximation
\begin{equation}
\label{eq:first_predictor}
				(\alpha_1, \alpha_2) = \left(-4 \epsilon^4, \frac{10}7 \epsilon^2 
+\frac{288-1250 c_1}{2401}\epsilon^4 + \mathcal O(\epsilon^5) \right)
\end{equation}
for the parameters. For the second method we use the predictor
from~\cite{Al-Hdaibat2016}. That is, we use the second-order homoclinic
predictor derived for the smooth normal form \cref{eq:BT_smooth_nf}. Then
calculate the center manifold transformation, which for the two-dimensional
systems reduces to a near-identity transformation, and parameter transformation
to transfer the predictor the original system. We obtain that the near-identity
and parameter transformation are just the identities. Thus, we obtain the
predictor
\begin{equation}
				\label{eq:second_predictor}	
				(\alpha_1, \alpha_2) = \left(-4 \epsilon^4, \frac{10}7 \epsilon^2 
				+\frac{288}{2401}\epsilon^4 + \mathcal O(\epsilon^5) \right).
\end{equation}
Obviously, this result is wrong.  To see why the latter second predictor
doesn't contain the term $c_1$ we consider the near-identity transformation
\begin{align}
\label{eq:near_identity_tranformation}
  \begin{cases}
  \begin{aligned}
    x &{}= w, \\ 
    \alpha &{}= \beta+ \begin{pmatrix} -c_1 \\ 0 \end{pmatrix}\beta_2^3.
  \end{aligned}
  \end{cases}
\end{align}
Then system \cref{eq:btnormalform_with_alpha2^3} becomes
\begin{align*}
  \begin{cases}
  \begin{aligned}
     \dot w_0 &{}= w_1, \\
		 \dot w_1 &{}= \beta_1 + \beta_2 w_1 + w_0^2 + w_0 w_1.
  \end{aligned}
  \end{cases}
\end{align*}
Using the second-order predictor from~\cite{Al-Hdaibat2016} for the smooth
normal form we obtain
\[
    (\beta_1, \beta_2) = \left(-4 \epsilon^4, \frac{10}7 \epsilon^2 
    +\frac{288}{2401}\epsilon^4 + \mathcal O(\epsilon^5) \right).
\]
Then using the near-identity transformation
\cref{eq:near_identity_tranformation} yields the predictor
\begin{equation*}
				(\alpha_1, \alpha_2) = \left(-4 \epsilon^4 
								- c_1 \left(\frac{10}7 \epsilon^2 
								+\frac{288}{2401}\epsilon^4 + \mathcal O(\epsilon^5) \right)^3, 
								\frac{10}7 \epsilon^2 
				+\frac{288}{2401}\epsilon^4 + \mathcal O(\epsilon^5) \right).
\end{equation*}
To compare this predictor with \cref{eq:first_predictor} we eliminate $\epsilon$
in both equations. This yields
\begin{equation*}
				\alpha_2(\alpha_1) = - \frac57 \sqrt{-\alpha_1} +
				\frac{288-1250c_1}{2401} \alpha_1 + \mathcal O(\alpha_1^{\frac32}).
\end{equation*}

We conclude, as expected, that if the correct transformation is used between the
normal form and the original system we keep the correct order of accuracy for
the approximation. In~\cite{Al-Hdaibat2016} the coefficients $H_{0003}$ and
$K_{03}$ (among other coefficients) are not incorporated into the
parameter-dependent center manifold transformation
\cref{eq:H_expansion,eq:K_expansion} leading to the incorrect
predictor \cref{eq:second_predictor}.

\section{Integrals from
\texorpdfstring{\cref{sec:third_order_homoclinic_approximation_RP}}{subsection 3.1.1}}%
\label{sec:I_n}
Making the substitution $s=\log(u)$ in \cref{eq:I_n} yields
\begin{equation*}
    I_n = 2^n \int_1^\infty \log^3\left(\frac{u^2+1}{u}\right) \frac{u^{n-1}}{(u^2+1)^n} \, du.
\end{equation*}
Then, by making the reciprocal substitution $u \to \frac{1}{u}$, we can show
that
\begin{equation*}
    I_n = 2^{n-1} \int_0^\infty \log^3\left(\frac{u^2+1}{u}\right)
            \frac{u^{n-1}}{(u^2+1)^n} \, du.
\end{equation*}
The last integral can be separated into the four following integrals
\begin{align}
    I_n^{(1)} =& \int_0^\infty \log^3(u^2+1) \frac{u^{n-1}}{(u^2+1)^n} du
              \label{eq:In1}, \\
    I_n^{(2)} =& \int_0^\infty \log^2(u^2+1)\log u \frac{u^{n-1}}{(u^2+1)^n} du 
              \label{eq:In2}, \\
    I_n^{(3)} =& \int_0^\infty \log(u^2+1)\log^2 u \frac{u^{n-1}}{(u^2+1)^n} du 
              \label{eq:In3}, \\
    I_n^{(4)} =& \int_0^\infty \log^3 u \frac{u^{n-1}}{(u^2+1)^n} du
              \label{eq:In4}.
\end{align}
The integral \cref{eq:In1} can easily be solved by first applying the
substitutions $u\to u^2+1$ and $u\to\frac1u$ consecutively to obtain
\begin{align*}
    I_n^{(1)} 
    &{}= - \frac12 \int_0^1 \log^3 (u) 
        \left(1-u\right)^{\frac{n}{2}-1}u^{\frac{n}{2}-1} \, du.
\end{align*}
Then using the binomial theorem yields
\begin{equation*}
    I_n^{(1)} 
= - \frac12 \sum_{k=0}^{\frac{n}{2}-1} \binom{\frac{n}{2}-1}k (-1)^k 
            \int_0^1 \log^3 (u) u^{\frac{n}{2}-1+k} \, du
    = \frac12 \sum_{k=0}^{\frac{n}{2}-1} \binom{\frac{n}{2}-1}k
    \frac{(-1)^k 3!}{({\frac{n}{2}+k})^4},
\end{equation*}
where in the last equality we used the well-know identity
\begin{equation}
    \label{eq:int01umlogn}
    \int_0^1 u^m \log^n (u) \, du = (-1)^n \frac{n!}{(m+1)^{n+1}},
\end{equation}
for $n$ and $m$ natural numbers.

To solve the integral \cref{eq:In2} we make three consecutive
substitutions: $u\to u^2$, $u\to u-1$, and $u\to\frac{1}{u}$. This gives
\[
    I_n^{(2)}=
    -\frac{1}{4} \int_0^1 
        \left(\log^3(v)-\log^2(v)\log(1-v) \right) 
        (1-v)^{\frac{n}2-2} v^{\frac{n}2-1} dv. \\
\]
Then, by using the binomial theorem and expanding the logarithm we obtain
\ifsiam
\begin{multline*}
    I_n^{(2)}={}
\frac{1}{4} \sum_{k=0}^{\frac{n}{2}-1} \binom{\frac{n}{2}-1}k
        (-1)^k \left[ 
          \int_0^1 \log^3(v) (1-v)^{\frac{n}2-2} v^{\frac{n}2-1} dv \right. \\
        \left. 
        -\sum_{l=1}^\infty \frac1l \int_0^1 \log^2(v) v^{\frac{n}2-1+k+l} dv 
    \right].
\end{multline*}
\else
\begin{multline*}
    I_n^{(2)}={}
\frac{1}{4} \sum_{k=0}^{\frac{n}{2}-1} \binom{\frac{n}{2}-1}k
        (-1)^k \left[ 
          \int_0^1 \log^3(v) (1-v)^{\frac{n}2-2} v^{\frac{n}2-1} dv
        -\sum_{l=1}^\infty \frac1l \int_0^1 \log^2(v) v^{\frac{n}2-1+k+l} dv 
    \right].
\end{multline*}
\fi
Using equality \cref{eq:int01umlogn} once more yields
\begin{equation*}
    I_n^{(2)}={}
\frac{1}{4} \sum_{k=1}^{\frac{n}{2}-1} \binom{\frac{n}{2}-1}k
    (-1)^k \left[ \frac{3!}{(\frac{n}{2}+k)^4}
     -2\sum_{l=1}^\infty \frac1{l(\frac{n}2+k+l)^3}\right].
\end{equation*}
Fractional decomposition shows that the innermost summation in the last
equation is equal to 
\begin{equation*}
    \sum_{l=1}^\infty \frac1{l(\frac{n}2+k+l)^3}
        = \frac{8}{2k+n} \left(\frac{H_{\frac{n}{2} + k}}{(2k+n)^2} +
            \frac{H^{(2)}_{k+\frac{n}{2}} - \zeta(2)}{2(2k+n)} +
            \frac{H^{(3)}_{k+\frac{n}{2}} - \zeta(3)}4 
        \right).
\end{equation*}
Thus, the integral in \cref{eq:In2} is equal to
\begin{multline*}
    I_n^{(2)} = \frac{1}{4} \sum_{k=1}^{\frac{n}{2}-1} 
        \binom{\frac{n}{2}-1}k (-1)^k 
        \left[ \frac{3!}{(\frac{n}{2}+k)^4} \right.  
        - \frac{2}{2k+n} 
            \left(\frac{4}{(2k+n)^2} H_{\frac{n}{2}+k} \right. \\ 
            \left.\left. + \frac{2}{(2k+n)} H^{(2)}_{\frac{n}{2}+k} 
            + H^{(3)}_{\frac{n}{2}+k} - \frac{2\zeta(2)}{2k+n} - 4\zeta(3)
    \right)\right].
\end{multline*}
Now most work is done, since subtracting two times \cref{eq:In2} from
\cref{eq:In3} is equal to
\begin{equation*}
    \int_0^\infty \log^2(u^2+1)\log (u) \log\left(1+\frac{1}{u^2}\right)
        \frac{u^{n-1}}{(u^2+1)^n} du.
\end{equation*}
The reportorial substitution $u\to\frac{1}{u}$ shows that this integral
vanishes. The same substitution also shows that the integral in  \cref{eq:In4}
vanishes.  We thus obtain the closed-form expression 
\begin{multline*}
    I_n = 2^{n-3} 3 \sum_{k=0}^{\frac{n}{2}-1} \binom{\frac{n}{2}-1}k  
            (-1)^k  \\
        \left[\frac1{(\frac{n}{2}+k)^4} +
            \frac{8}{2k+n} \left(\frac{H_{\frac{n}{2} + k}}{(2k+n)^2}
                + \frac{H^{(2)}_{\frac{n}{2}+k} - \zeta(2)}{2(2k+n)}
            + \frac{H^{(3)}_{\frac{n}{2}+k} - \zeta(3)}{4}
        \right)\right],
\end{multline*}
where $\zeta$ is the Riemann zeta function and $H_n^{(m)}$ is the $n$-th
generalized harmonic number of order $m$.

\section{Asymptotics for homoclinic solution in the smooth normal form}
\label{sec:asymptotics-for-homoclinic-solution-for-the-smooth-normal-form}

Following the procedure outlined in \cref{sec:PolynomailLindstedtPoincare} to
the second-order nonlinear differential equation
\cref{eq:second_order_nonlinear_oscillator_smooth_normalform} obtained from the
smooth normal form \cref{eq:BT_smooth_nf}. For the third-order homoclinic
predictor we obtain
\begin{align}
				\sigma ={}& 6 + \frac{3 \left(-70 a_1 b+6 b^2+49 d\right)}{49 a^2} \epsilon^2
                    + \mathcal{O}(\epsilon^4),
				\nonumber \\
				\delta ={}& -4 + \frac{140 a_1 b-18 b^2-245 d}{49 a^2} \epsilon^2
                    + \mathcal{O}(\epsilon^4),
				\nonumber \\
				\label{eq:tau_smooth}
				\tau   ={}& \frac{10}{7} + \frac{98 b (50 a b_1+73 d)-9604 a e-2450 a_1 b^2+288 b^3}{2401 a^2 b} \epsilon^2
								+ \mathcal{O}(\epsilon^4), \\
				\tilde \omega(\zeta) ={}& 1 - \frac{6b}{7a}\zeta  \epsilon	+ 
				\frac{70 a_1 b+18 b^2 \left(3 \zeta ^2+1\right)+49 d \left(9 \zeta ^2-5\right)}{196 a^2} \epsilon^2 +
				\nonumber \\
        & \frac{\zeta}{2401 a^3} \left( \left(-147 b \left(20 a b_1-7 d
        \zeta ^2+11 d\right)-9604 a e \left(\zeta ^2-1\right) \right. \right. \nonumber \\
        & \left. \left. +1470 a_1 b^2+18 b^3 \left(7 \zeta ^2-11\right)\right) \right)\epsilon^3 + \mathcal{O}(\epsilon^4)
        \nonumber.
\end{align}
It follows that
\begin{align}
    \label{eq:third_order_uhat_smooth}
    \tilde {u}(\zeta) 
    ={}& 6 \zeta ^2-4 + \frac{-70 a_1 b \left(3 \zeta ^2-2\right)+18 b^2 \left(\zeta
    ^2-1\right)+49 d \left(3 \zeta ^2-5\right)}{49 a^2} \epsilon^2 
        + \mathcal{O}(\epsilon^4), \\
    \label{eq:third_order_vhat_smooth}
	\tilde  v(\zeta) 
  ={}& -2 \tilde\omega(\zeta) \sigma (1-\zeta^2)\zeta
	= -\left[ -12 + \frac{72b}{7a} \zeta \epsilon
			 - \frac{3}{49 a^2} \left(70 a_1 b-6 b^2 \left(9 \zeta ^2+5\right)
       \right. \right. \\
     & \left. -147 d \left(3 \zeta ^2-1\right)\right)\epsilon^2 + \frac{12
         \zeta}{2401 a^3}  \left(147 b \left(20 a b_1-7 d \zeta ^2+18
         d\right) \right. \nonumber \\
    & \left. \left. + 9604 a e \left(\zeta ^2-1\right)-2940 a_1 b^2-18 b^3 \
\left(7 \zeta ^2-18\right)\right) \epsilon^3 \right]
     (1-\zeta^2) \zeta + \mathcal{O}(\epsilon^4) \nonumber.
\end{align}

The relation $\xi(s)$ can be obtained by solving the ODE
\begin{equation}
				\label{eq:third_order_dxi_ds_smooth}
				\frac{d\xi}{ds}(s) = \tilde \omega(\tanh(\xi(s))).
\end{equation}
Thus, we substitute 
\begin{equation*}
				\xi(s) = s + \xi_1(s)\epsilon + \xi_2(s)\epsilon^2
				+ \xi_3(s)\epsilon^3 + \mathcal{O}(\epsilon^4),
\end{equation*}
into \cref{eq:third_order_dxi_ds_smooth} and expand the resulting equation in
$\epsilon$ to obtain
\begin{align*}
\frac{d\xi_1}{ds}(s) ={}& -\frac{6b\tanh(s)}{7a}, \\
\frac{d\xi_2}{ds}(s) ={}& \frac{-168 a b \sech^2(s) \xi_1(s)+70 a_1 b+9 \left(6 b^2+49 d\right) \tanh ^2(s)+18 b^2-245 d}{196 a^2} , \\
\frac{d\xi_3}{ds}(s) ={}& \frac{\sech^3(s)}{4802 a^3} \left(4116 a^2 b \sinh (s) (\xi_1(s))^2-4116 a^2 b \cosh (s) \xi_2(s) \right. \\
                        & +441 a \left(6 b^2+49 d\right) \sinh (s)\xi_1(s)+2 \sinh (s) \left(-3 b \cosh (2 s) \right. \\
                        & \left(98 (5 a b_1+d)-245 a_1 b+12 b^2\right)-1470 a b b_1+9604 a e+735 a_1 b^2 \\
                        & \left. \left. -162 b^3-1323 b d\right)\right).
\end{align*}
Here we directly used that $\xi_0(s)=s$. By solving these equations recursively
we obtain
\begin{equation*}
\begin{aligned}
\xi_1(s) ={}& - \frac{6b}{7a}\log(\cosh(s)), \\
\xi_2(s) ={}& \frac{2 s \left(35 a_1 b-36 b^2+98 d\right)+9 \tanh (s) \left(16 b^2 \log (\cosh (s))+10 b^2-49 d\right)}{196 a^2}, \\
\xi_3(s) ={}& \frac{1}{4802 a^3}\left(-7 \sech^2(s) \left(1372 a e-27 b \left(6 b^2+49 d\right) \log (\cosh (s)) \right. \right. \\
            & \left. +216 b^3 \log ^2(\cosh(s))-234 b^3-147 b d\right)-5880 a b b_1 \log (\cosh (s)) \\
            & + 9604 a e+42 b s \tanh (s) \left(-35 a_1 b+36 b^2-98 d\right)+4410 a_1 b^2 \log (\cosh (s)) \\
            & \left. -1656 b^3 \log (\cosh (s))-1638 b^3+2940 b d \log (\cosh (s))-1029 b d\right).
\end{aligned}
\end{equation*}
Here we used the phase condition that $\xi_i(0)=0,i=1,2,3$. This results in the
constraint $v(0)=0$. Substituting the above expression for $\xi$ into
\cref{eq:blowup_smooth} we obtain the third-order predictor
\begin{equation}
\label{eq:third_order_predictor_LP_tau_smooth}
\begin{cases}
\begin{aligned}
w_0(t)  &= \frac{1}{a} \tilde{u}\left(\tanh\left(\xi(\epsilon t)\right)\right) \epsilon^2, \\
w_1(t)  &= \frac{1}{a} \tilde{v}\left(\tanh\left(\xi(\epsilon t)\right)\right) \epsilon^3, \\
\beta_1    &= -\frac{4}{a}\epsilon^4, \\
\beta_2    &= \frac{b}{a}\tau\epsilon^2,
\end{aligned}
\end{cases}
\end{equation}
where $\tau,\tilde{u}$ and $\tilde{v}$ are given by
\cref{eq:tau_smooth,eq:third_order_uhat_smooth,eq:third_order_vhat_smooth},
respectively.

Note that by expanding $\tilde{u}\left(\tanh\left(\xi(s)\right)\right)$ in
\cref{eq:third_order_predictor_LP_tau_smooth} up to order three in $\epsilon$ 
we obtain
\begin{equation}
\label{eq:u_i_RP_smooth}
\begin{aligned}
    u_0(s) &= 6 \tanh ^2(s)-4, \\
    u_1(s) &= -\frac{72 b \tanh (s) \sech^2(s) \log (\cosh (s))}{7 a}, \\
    u_2(s) &= \left(12 s \sinh (2 s) \left(35 a_1 b-36 b^2+98 d\right)+8
        \cosh (2 s) \left(7 \left(5 a_1 b+9 b^2-56 d\right) \right. \right. \\
              & \; \left. -108 b^2 \log ^2(\cosh (s))+108 b^2 \log (\cosh (s))\right)
              +9 \left(35 a_1 b+192 b^2 \log^2(\cosh (s)) \right. \\
              & \; \left.\left. -96 b^2 \log (\cosh(s))-64 b^2+245 d\right)-7
              \cosh (4 s) (5 a_1 b+7 d)\right)\frac{\sech^4(s)}{196 a^2}, \\
        u_3(s) &=  \left(-2 \sinh (s) \left(\cosh (2 s) \left(-6 b \log (\cosh (s))
                        \left(-980 (a b_1+3 d)+1225 a_1 b+312 b^2\right)
                        \right.\right.\right. \\ 
                  & \; \left. +7 \left(-1372 a e+234 b^3+147 b d\right)+2016 b^3 \log^3(\cosh (s)) -6048 b^3 \log ^2(\cosh (s))\right)\\
                  & \; +6 b \log(\cosh(s)) \left(980 a b_1-1225 a_1 b+1200 b^2 -9408 d\right)+7 \left(1372 a e-234 b^3 \right. \\
                  & \; \left. \left. - 147 b d\right)-10080 b^3 \log ^3(\cosh (s)) +15120
                  b^3 \log ^2(\cosh (s))\right)  \\
                  & \; +42 b s \cosh ( 3 s)\left(35 a_1 b -36 b^2+98 d\right) (2 \log (\cosh (s)) -1) \\
                  & \; \left. +42 b s \cosh (s) \left(-35 a_1 b+36 b^2-98 d\right) (6 \log (\cosh (s))-1)\right)\frac{3 \sech^5(s)}{4802 a^3}.
\end{aligned}
\end{equation}
Together with \cref{eq:tau_smooth}, this is the solution obtained by using the
regular perturbation method to the second-order nonlinear oscillator
\cref{eq:second_order_nonlinear_oscillator_smooth_normalform} obtained from the
smooth normal form with phase condition $\dot u(0)=0$. This gives us the third
order homoclinic predictor
\begin{equation}
\label{eq:third_order_predictor_smooth_RPM_tau}
\begin{cases}
\begin{aligned}
w_0(t)  &= \frac1{a} \left( \sum_{i=0}^3 u_i(\epsilon t) \epsilon^i +
\mathcal{O}(\epsilon^4) \right)   \epsilon^2, \\
w_1(t)  &= \frac1{a} \left( \sum_{i=0}^3 \dot u_i(\epsilon t) \epsilon^i +
\mathcal{O}(\epsilon^4) \right)   \epsilon^3, \\
\beta_1    &= - \frac{4}{a} \epsilon^4, \\
\beta_2    &= \frac{b}{a}\epsilon^2 \tau,
\end{aligned}
\end{cases}
\end{equation}
where $\tau$ is given by \cref{eq:tau_smooth} and $u_i(i=0,\dots,3)$ are given
by \cref{eq:u_i_RP_smooth}.

\section{Case study of the quadratic codim 2 Bogdanov-Takens normal form}
\label{sec:case_study_BT2}
In this section, we will numerically study the algorithm outlined in
\cref{sec:PolynomailLindstedtPoincare} for the quadratic codimension 2
Bogdanov-Takens normal form \cref{eq:universal_unfolding}.  Since the algorithm
only relies on arithmetic and calculus on polynomials over the field $\mathbb
Q$, see \cref{corllary:rational_coefficients} there is no need to use propriety
software for the implementation. We choose the relative new programming
language Julia~\cite{bezanson2017julia}. Julia natively supports arbitrary
precision rational numbers. We use the package
\mintinline{julia}{Polynomials.jl}~\cite{Polynomials} to handle the
differentiation and integration of polynomials, as well as polynomial division.
Since the programming language Julia starts indexing arrays at $1$, we use the
package \mintinline{julia}{OffsetArrays}~\cite{OffsetArrays} to lower the index
to $0$ to keep the indexing identical. The code is given in the listing below.

\begin{minted}[breaklines,escapeinside=||,mathescape=true,
numbersep=3pt, gobble=2, frame=lines, fontsize=\small, framesep=2mm]{julia}
module BTQuadraticHomoclinic

using Polynomials, OffsetArrays

function z(i, τ, σ, ω)
    if i == 1
        p = 24*Polynomial([0,-2, 0,  3])
    else
        p =  Polynomial([0,2])*sum(σ[l]*τ[i-1-l] for l in 1:i-1)
        p += Polynomial([0,2])*sum(σ[l]*ω[k]*τ[i-1-l-k] 
                for k in 1:i-1 for l in 0:i-1-k)
        p -= Polynomial([2,0,-6])*sum(σ[i-l]*ω[l] for l in 1:i-1)
        p -= 2*sum(σ[i-l-k]*ω[l]*derivative(
                Polynomial([0,1,0,-1])*ω[k]) for k in 1:i-1 for l in 0:i-k)
        p += Polynomial([-1,0,1])*Polynomial([0,2])*sum(σ[l]*
               σ[i-1-l-k]*ω[k] for k in 0:i-1 for l in 0:i-2-k)
        p += Polynomial([-4,0,6])*Polynomial([0,2])*sum(σ[i-1-k]*ω[k]
                for k in 0:i-1)
        p += Polynomial([1,0,-1])*sum(σ[k]*σ[i-k] for k in 1:i-1)
    end
    Polynomial([1,0,-1])*p
end

function solve(;order = n)
    σ = OffsetArray(zeros(Rational{BigInt}, order), 0:order-2)
    τ = OffsetArray(zeros(Rational{BigInt}, order-1), 0:order-1)
    ω = OffsetArray(Array{Polynomial}(undef, order), 0:order-1)

    σ[0], ω[0]  = 6, 1
    for i=1:order-1
        gi = integrate(Polynomial([0,12//1])*z(i, τ, σ, ω))
        if i%2 == 1
            τ[i-1] = -10//192*gi(1)
            ω[i] = (τ[i-1]*144*Polynomial([2//15, 0, 0, 1//3, 0, -1//5]) 
                      + gi) ÷ (Polynomial([1,0,-1])*Polynomial([0,12]))^2
        else
            σ[i] = -gi(-1)//12
            ω[i] = -σ[i]//6 + (σ[i]*Polynomial([-1,0,1])*
                (Polynomial([-4,0,6])^2-4) + gi + 12*σ[i]) ÷ 
                        (Polynomial([1,0,-1])*Polynomial([0,12]))^2
        end
    end
    τ, σ, ω
end

end
\end{minted}

In~\cite{Algaba_2019} the impression is given that one can obtain higher-order
approximations very fast using their algorithm. However, our algorithm, which
should be superior, shows that the order of approximation and the computational
cost is not linear related. We performed a benchmark to obtain a 20th-order
approximation to $\tau$, see \cref{tab:benchmark}.
\begin{table}[tpb]
    \centering
    \begin{minted}[breaklines,escapeinside=||,mathescape=true,
    numbersep=3pt, gobble=2, frame=lines, fontsize=\small, framesep=2mm]{julia}
    julia> @benchmark BTQuadraticHomoclinic.solve(order=20)
    BenchmarkTools.Trial:
      memory estimate:  107.36 MiB
      allocs estimate:  5384379
      --------------
      minimum time:     382.111 ms (21.56% GC)
      median time:      414.436 ms (24.41% GC)
      mean time:        459.831 ms (26.81% GC)
      maximum time:     577.254 ms (32.91% GC)
      --------------
      samples:          11
      evals/sample:     1
    \end{minted}
    \caption{Benchmark to obtain a 20th-order approximation to $\tau$ in the
    quadratic normal form \cref{eq:universal_unfolding} using the algorithm
    outlined in \cref{sec:PolynomailLindstedtPoincare}. See in particular
    \cref{corollary:quadraticBTsigma_delta_relation} and the algorithm above
    .}\label{tab:benchmark}
\end{table}
These results were obtained on the mobile CPU Intel i5-6200U (4) @ 2.800GHz
with 11407MiB of memory. The coefficients for $\tau$ are shown in
\cref{table:tauCoeffients}. We see that the length of the numerator and
denominator increases at each (even) order. This also holds true for the
coefficients of $\sigma$ and the coefficients of the polynomials $\omega$.
Performing operations on these rational numbers become increasingly more
difficult for the computer to deal with. In
\cref{fig:BTQuadraticNormalFormTimings} a log-linear plot is shown, plotting
the time in seconds to solve the $ith$ order equation
\cref{eq:ith_order_equation}. It took nearly 10 hours to obtain the 200th order
approximation of $\tau$. A linear regression on the last 50 data points
indicates the time increases exponentially. Extrapolation yields that it would
take more than 17 years to solve the first 500 terms if memory doesn't become
an issue. Since the algorithm is embarrassingly parallelizable we could speed
up the process. However, exponential growth cannot be escaped.
\begin{table}[bh!]
\label{table:tauCoeffients}
\centering
\def\arraystretch{1.2}
\pgfplotstabletypeset[
    col sep=comma,
    string type,
    every head row/.style={%
        before row=\hline,
        after row=\hline
    },
    every last
    row/.style={after
    row=\hline},
    columns/i/.style={column
        name=$i$,
        column
    type=r},
    columns/tau/.style={column
        name=$\tau_i$,
        column
    type=l}
    ]{data/coefficients_first10.csv}
    \caption{First 20 coefficients of $\tau$}
\end{table}
\begin{figure}[ht]
\label{fig:BTQuadraticNormalFormTimings}
\centering
\ifcompileimages%
  \tikzsetnextfilename{BTQuadraticNormalFormTimings}%
  \input{tikz/BTQuadraticNormalFormTimings}%

\else
    \includegraphics{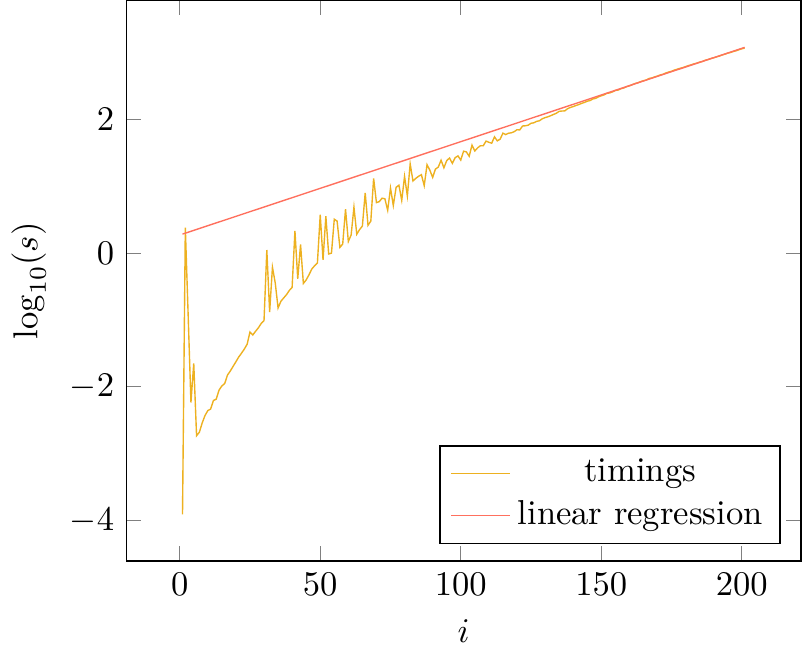}
\fi
\caption{Log-linear plot of the order $i$ and the time in seconds it took to
compute the coefficients.}
\end{figure}

\begin{figure}[ht!]
\label{fig:BTQuadraticNormalFormApproximations}
\centering
\ifcompileimages
  \tikzsetnextfilename{BTQuadraticNormalFormApproximations}%
  \input{tikz/BTQuadraticNormalFormApproximations}%

    \else
    \includegraphics{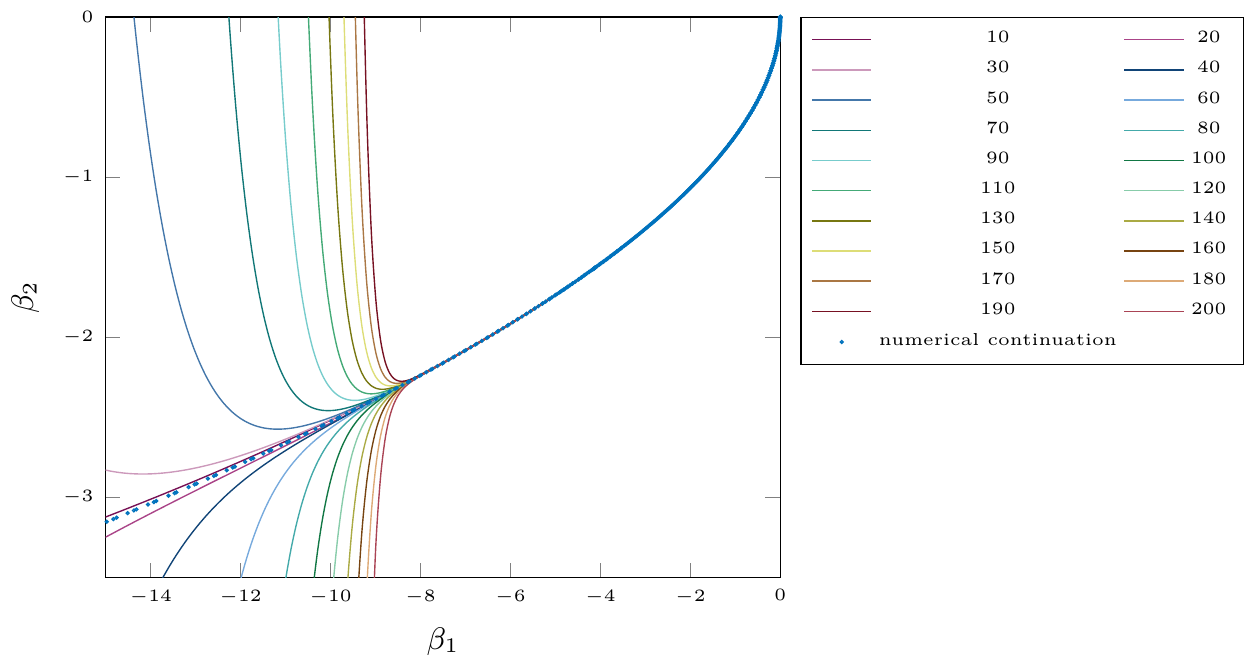}
\fi
\caption{Comparison between the numerical obtained continued homoclinic
    bifurcation curve emanating from the Bogdanov-Takens point in
    \cref{eq:universal_unfolding} with $a=b=1$ and the parameter approximations
    with orders ranging from 10 to 200.  For $\beta_1 \lesssim -8$ the
    approximation starts diverging. This indicates that the radius of
    convergence in the perturbation parameter $\epsilon$ is less than or equal
    to $\sqrt[4]{2}$. Note that for $\beta_1 > -8$ the tenth order is
    already indistinguishable from the numerical obtained parameters.}
\end{figure}

Next, we would like to make some comments on the radius of convergence of the
asymptotic approximation to the homoclinic orbit in the quadratic
Bogdanov-Takens bifurcation. In~\cite{Algaba_2019} there is the remark that the
higher-order approximation can greatly improve the accuracy of the
approximation for large parameter values. However, this fully depends
on the radius of convergence of the series. To obtain a first impression, we
compare the parameters computed from the numerical continued solution to the
homoclinic solution in \cref{eq:universal_unfolding} using MatCont with the
predicted parameters. \cref{fig:BTQuadraticNormalFormApproximations} reveals 
a typical situation  in perturbation series. Increasing the order of the
perturbation parameter improves the approximation for small parameters, but for
larger parameters, the approximation becomes much worse. Reproducing~\cite[Fig
3c]{Algaba_2019}, but increasing the order, shows that this solution is outside
the radius of convergence.

\section*{Acknowledgments}
The authors would like to thank Prof. Peter De Maesschalck (Hasselt University)
for multiple useful discussions during this research project.

\bibliographystyle{siamplain}
\bibliography{references}

\begin{thebibliography}{10}

\bibitem{Al-Hdaibat2016}
{\sc B.~Al-Hdaibat, W.~Govaerts, {\mbox {Yu.A}}.~Kuznetsov, and H.~G.~E.
  Meijer}, {\em Initialization of homoclinic solutions near bogdanov--takens
  points: Lindstedt--poincar\'e compared with regular perturbation method},
  SIAM Journal on Applied Dynamical Systems, 15 (2016), pp.~952--980,
  \url{https://doi.org/10.1137/15M1017491}.

\bibitem{Algaba_2019}
{\sc A.~Algaba, K.-W. Chung, B.-W. Qin, and A.~J. Rodríguez-Luis}, {\em A
  nonlinear time transformation method to compute all the coefficients for the
  homoclinic bifurcation in the quadratic takens--bogdanov normal form},
  Nonlinear Dyn, 97 (2019), pp.~979--990,
  \url{https://doi.org/10.1007/s11071-019-05025-2}.

\bibitem{Arnold_1983}
{\sc V.~I. Arnol'd}, {\em Geometrical methods in the theory of ordinary
  differential equations}, vol.~250 of Grundlehren der Mathematischen
  Wissenschaften [Fundamental Principles of Mathematical Sciences],
  Springer-Verlag, New York, second~ed., 1988,
  \url{https://doi.org/10.1007/978-1-4612-1037-5}.
\newblock Translated from the Russian by Joseph Sz\"{u}cs [J\'{o}zsef M.
  Sz\H{u}cs].

\bibitem{Beyn_1994}
{\sc W.-J. Beyn}, {\em Numerical analysis of homoclinic orbits emanating from a
  {T}akens-{B}ogdanov point}, IMA Journal of Numerical Analysis, 14 (1994),
  p.~381–410, \url{https://doi.org/10.1093/imanum/14.3.381}.

\bibitem{Beyn2002Chapter4}
{\sc W.-J. Beyn, A.~R. Champneys, E.~J. Doedel, W.~Govaerts, {\mbox
  {Yu.A}}.~Kuznetsov, and B.~Sandstede}, {\em Chapter 4 – Numerical
  Continuation, and Computation of Normal Forms}, Elsevier Science, North
  Holland, 2002.

\bibitem{bezanson2017julia}
{\sc J.~Bezanson, A.~Edelman, S.~Karpinski, and V.~B. Shah}, {\em Julia: A
  fresh approach to numerical computing}, SIAM review, 59 (2017), pp.~65--98,
  \url{https://doi.org/10.1137/141000671}.

\bibitem{Bogdanov1975}
{\sc R.~I. Bogdanov}, {\em Versal deformation of a singular point of a vector
  field on the plane in the case of zero eigenvalues}, Funkcional Anal. i
  Prilo\v{z}en., 9 (1975), p.~63.

\bibitem{Bogdanov1976}
{\sc R.~I. Bogdanov}, {\em The versal deformation of a singular point of a
  vector field on the plane in the case of zero eigenvalues}, Trudy Sem.
  Petrovsk.,  (1976), pp.~37--65.

\bibitem{Bosschaert@2016}
{\sc M.~M. Bosschaert}, {\em Switching from codimension 2 bifurcations of
  equilibria in delay differential equations}, 2016,
  \url{https://dspace.library.uu.nl/handle/1874/334792}.

\bibitem{Broer1991}
{\sc H.~W. Broer, F.~Dumortier, S.~Van~Strien, and F.~Takens}, {\em Structures
  in Dynamics: Finite Dimensional Deterministic Studies}, vol.~2, Elsevier,
  1991.

\bibitem{Champneys1996}
{\sc A.~Champneys, {\mbox {Yu.A}}.~Kuznetsov, and B.~Sandstede}, {\em A
  {numerical} {toolbox} {for} {homoclinic} {bifurcation} {analysis}},
  International Journal of Bifurcation and Chaos, 06 (1996), pp.~867--887,
  \url{https://doi.org/10.1142/s0218127496000485}.

\bibitem{Chen2009}
{\sc S.~Chen, Y.~Chen, and K.~Sze}, {\em A hyperbolic perturbation method for
  determining homoclinic solution of certain strongly nonlinear autonomous
  oscillators}, Journal of Sound and Vibration, 322 (2009), pp.~381--392,
  \url{https://doi.org/10.1016/j.jsv.2008.11.015}.

\bibitem{Chen@2010}
{\sc S.~Chen, Y.~Chen, and K.~Y. Sze}, {\em Homoclinic and heteroclinic
  solutions of cubic strongly nonlinear autonomous oscillators by hyperbolic
  lindstedt-poincar\'e method}, Sci. China Technol. Sci., 53 (2010),
  pp.~692--702, \url{https://doi.org/10.1007/s11431-010-0069-5}.

\bibitem{Chen@2009A}
{\sc Y.~Chen and S.~Chen}, {\em Homoclinic and heteroclinic solutions of cubic
  strongly nonlinear autonomous oscillators by the hyperbolic perturbation
  method}, Nonlinear Dyn, 58 (2009), pp.~417--429,
  \url{https://doi.org/10.1007/s11071-009-9489-9}.

\bibitem{Chen@2009B}
{\sc Y.~Chen, S.~Chen, and K.~Sze}, {\em A hyperbolic lindstedt--poincar\'e
  method for homoclinic motion of a kind of strongly nonlinear autonomous
  oscillators}, Acta Mech Sin, 25 (2009), pp.~721--729,
  \url{https://doi.org/10.1007/s10409-009-0276-0}.

\bibitem{Chen@2012}
{\sc Y.-y. Chen, L.-w. Yan, K.-y. Sze, and S.-h. Chen}, {\em Generalized
  hyperbolic perturbation method for homoclinic solutions of strongly nonlinear
  autonomous systems}, Appl. Math. Mech.-Engl. Ed., 33 (2012), pp.~1137--1152,
  \url{https://doi.org/10.1007/s10483-012-1611-6}.

\bibitem{MatCont}
{\sc A.~Dhooge, W.~Govaerts, and {\mbox {Yu.A}}.~Kuznetsov}, {\em {MATCONT}: A
  {MATLAB} package for numerical bifurcation analysis of {ODEs}}, ACM Trans.
  Math. Softw., 29 (2003), p.~141–164,
  \url{https://doi.org/10.1145/779359.779362}.

\bibitem{Dhooge_2008}
{\sc A.~Dhooge, W.~Govaerts, {\mbox {Yu.A}}.~Kuznetsov, H.~G. Meijer, and
  B.~Sautois}, {\em New features of the software matcontfor bifurcation
  analysis of dynamical systems}, Mathematical and Computer Modelling of
  Dynamical Systems, 14 (2008), p.~147–175,
  \url{https://doi.org/10.1080/13873950701742754}.

\bibitem{Friedman2001Continuation}
{\sc L.~Dieci and M.~Friedman}, {\em Continuation of invariant subspaces},
  Numer. Linear Algebra Appl., 8 (2001), pp.~317--327,
  \url{https://doi.org/10.1002/nla.245}.

\bibitem{Doedel1986auto}
{\sc E.~Doedel and J.~Kern{\'e}vez}, {\em Auto: Software for continuation
  problems in ordinary differential equations with applications}, Applied
  Mathematics, California Institute of Technology, Pasadena, CA,  (1986).

\bibitem{Doedel@1989}
{\sc E.~J. Doedel and M.~J. Friedman}, {\em Numerical computation of
  heteroclinic orbits}, J. Comput. Appl. Math., 26 (1989), pp.~155--170,
  \url{https://doi.org/10.1016/0377-0427(89)90153-2}.
\newblock Continuation techniques and bifurcation problems.

\bibitem{govaerts2000numerical}
{\sc W.~J.~F. Govaerts}, {\em Numerical Methods for Bifurcations of Dynamical
  Equilibria}, Society for Industrial and Applied Mathematics, Philadelphia,
  PA, 2000, \url{https://doi.org/10.1137/1.9780898719543}.

\bibitem{guckenheimer1983nonlinear}
{\sc J.~Guckenheimer and P.~Holmes}, {\em {Nonlinear Oscillations, Dynamical
  Systems, and Bifurcations of Vector Fields}}, vol.~42, Springer, 1983.

\bibitem{Guckenheimer@1993}
{\sc J.~Guckenheimer and J.~Labouriau}, {\em Bifurcation of the hodgkin and
  huxley equations: A new twist}, Bltn Mathcal Biology, 55 (1993),
  pp.~937--952, \url{https://doi.org/10.1007/bf02460693}.

\bibitem{Hirschberg_1991}
{\sc P.~Hirschberg and E.~Knobloch}, {\em An unfolding of the takens-bogdanov
  singularity}, Quarterly of Applied Mathematics, 49 (1991), p.~281–287,
  \url{https://doi.org/10.1090/qam/1106393}.

\bibitem{HodgkinHuxley1952a}
{\sc A.~L. Hodgkin and A.~F. Huxley}, {\em Currents carried by sodium and
  potassium ions through the membrane of the giant axon of loligo}, The Journal
  of Physiology, 116 (1952), pp.~449--472,
  \url{https://doi.org/10.1113/jphysiol.1952.sp004717}.

\bibitem{HodgkinHuxley1952}
{\sc A.~L. Hodgkin and A.~F. Huxley}, {\em A quantitative description of
  membrane current and its application to conduction and excitation in nerve},
  The Journal of Physiology, 117 (1952), pp.~500--544,
  \url{https://doi.org/10.1113/jphysiol.1952.sp004764}.

\bibitem{WolframEngine}
{\sc W.~R. Inc.}, {\em Wolfram engine, {V}ersion 12.1.1},
  \url{https://www.wolfram.com/engine}.
\newblock Champaign, IL, 2020.

\bibitem{Jack1975ElectricCurrentFlow}
{\sc R.~W.~T. J.~J. B.~Jack, D.~Noble}, {\em Electric current flow in excitable
  cells}, Quarterly Journal of Experimental Physiology and Cognate Medical
  Sciences, 61 (1976), pp.~75--75,
  \url{https://doi.org/10.1113/expphysiol.1976.sp002339}.

\bibitem{Jepsen@2021}
{\sc C.~B. Jepsen and F.~K. Popov}, {\em Homoclinic rg flows, or when relevant
  operators become irrelevant}, arXiv,  (2021),
  \url{https://arxiv.org/abs/2105.01625}.

\bibitem{Decker1986}
{\sc A.~D. Jepson and D.~W. Decker}, {\em Convergence cones near bifurcation},
  SIAM J. Numer. Anal., 23 (1986), pp.~959--975,
  \url{https://doi.org/10.1137/0723064}.

\bibitem{Jepson1986}
{\sc A.~D. Jepson and D.~W. Decker}, {\em Convergence cones near bifurcation},
  SIAM Journal on Numerical Analysis, 23 (1986), pp.~959--975,
  \url{https://doi.org/10.1137/0723064}.

\bibitem{OffsetArrays}
{\sc JuliaMath}, {\em Offsetarrays.jl}.
\newblock \url{https://github.com/JuliaArrays/OffsetArrays.jl}, 2021.

\bibitem{Polynomials}
{\sc JuliaMath}, {\em Polynomials.jl}.
\newblock \url{https://github.com/JuliaMath/Polynomials.jl}, 2021.

\bibitem{Keller1977}
{\sc H.~B. Keller}, {\em Constructive methods for bifurcation and nonlinear
  eigenvalue problems}, in Computing Methods in Applied Sciences and
  Engineering, 1977, I, R.~Glowinski, J.~L. Lions, and I.~Laboria, eds.,
  Berlin, Heidelberg, 1979, Springer Berlin Heidelberg, pp.~241--251,
  \url{https://doi.org/10.1007/BFb0063623}.

\bibitem{Keller1987Numerical}
{\sc H.~B. Keller}, {\em Lectures on numerical methods in bifurcation
  problems}, vol.~79 of Tata Institute of Fundamental Research Lectures on
  Mathematics and Physics, Published for the Tata Institute of Fundamental
  Research, Bombay; by Springer-Verlag, Berlin, 1987.
\newblock With notes by A. K. Nandakumaran and Mythily Ramaswamy.

\bibitem{Kuznetsov1999}
{\sc {\mbox {Yu.A}}.~Kuznetsov}, {\em Numerical normalization techniques for
  all codim {$2$} bifurcations of equilibria in {ODE}'s}, SIAM J. Numer. Anal.,
  36 (1999), pp.~1104--1124, \url{https://doi.org/10.1137/S0036142998335005}.

\bibitem{Kuznetsov2004}
{\sc {\mbox {Yu.A}}.~Kuznetsov}, {\em Elements of {A}pplied {B}ifurcation
  {T}heory}, vol.~112 of Applied Mathematical Sciences, Springer-Verlag, New
  York, third~ed., 2004, \url{https://doi.org/10.1007/978-1-4757-3978-7}.

\bibitem{Kuznetsov2005practical}
{\sc {\mbox {Yu.A}}.~Kuznetsov}, {\em Practical computation of normal forms on
  center manifolds at degenerate {B}ogdanov–{T}akens bifurcations},
  International Journal of Bifurcation and Chaos, 15 (2005), p.~3535–3546,
  \url{https://doi.org/10.1142/s0218127405014209}.

\bibitem{Kuznetsov2014improved}
{\sc {\mbox {Yu.A}}.~Kuznetsov, H.~G.~E. Meijer, B.~Al~Hdaibat, and
  W.~Govaerts}, {\em Improved homoclinic predictor for bogdanov–takens
  bifurcation}, International Journal of Bifurcation and Chaos, 24 (2014),
  p.~1450057, \url{https://doi.org/10.1142/s0218127414500576}.

\bibitem{Gray-Scott2015}
{\sc {\mbox {Yu.A}}.~Kuznetsov, H.~G.~E. Meijer, B.~Al-Hdaibat, and
  W.~Govaerts}, {\em Accurate approximation of homoclinic solutions in
  {G}ray-{S}cott kinetic model}, Internat. J. Bifur. Chaos Appl. Sci. Engrg.,
  25 (2015), pp.~1550125, 10, \url{https://doi.org/10.1142/S0218127415501254}.

\bibitem{Murdock@2003}
{\sc J.~Murdock}, {\em Normal Forms and Unfoldings for Local Dynamical
  Systems}, Springer New York, 2003, \url{https://doi.org/10.1007/b97515}.

\bibitem{Rodriguez-Luis1990}
{\sc A.~J. Rodr{\'i}guez-Luis, E.~Freire, and E.~Ponce}, {\em A Method for
  Homoclinic and Heteroclinic Continuation in Two and Three Dimensions},
  Springer Netherlands, Dordrecht, 1990, pp.~197--210,
  \url{https://doi.org/10.1007/978-94-009-0659-4_13}.

\bibitem{Takens1974}
{\sc F.~Takens}, {\em Forced oscillations and bifurcations}, Comm. Math. Inst.
  Rijksuniv. Utrecht, No. 3-1974, 1974.

\bibitem{Milton@1974}
{\sc M.~Van~Dyke}, {\em Analysis and improvement of perturbation series}, Q J
  Mechanics Appl Math, 27 (1974), pp.~423--450,
  \url{https://doi.org/10.1093/qjmam/27.4.423}.

\bibitem{DeWitte2012}
{\sc V.~D. Witte, W.~Govaerts, {\mbox {Yu.A}}.~Kuznetsov, and M.~Friedman},
  {\em Interactive initialization and continuation of homoclinic and
  heteroclinic orbits in {MATLAB}}, {ACM} Transactions on Mathematical
  Software, 38 (2012), pp.~1--34,
  \url{https://doi.org/10.1145/2168773.2168776}.

\bibitem{Xu_1996}
{\sc Z.~Xu, H.~S.~Y. Chan, and K.~W. Chung}, {\em Separatrices and limit cycles
  of strongly nonlinear oscillators by the perturbation-incremental method},
  Nonlinear Dynamics, 11 (1996), p.~213–233,
  \url{https://doi.org/10.1007/bf00120718}.

\end{thebibliography}
\end{document}